\newtheorem{theorem}{Theorem}[section]
\newtheorem{definition}[theorem]{Definition}
\newtheorem{lemma}[theorem]{Lemma}
\newtheorem{prop}[theorem]{Proposition}
\newtheorem{fact}[theorem]{Fact}
\newtheorem{exa}[theorem]{Example}
\newtheorem{remark}[theorem]{Remark}
\newcommand{\commment}[1]{}
\newcommand{\p}{\mathcal{P}}
\newcommand{\Q}{\mathcal{Q}}
\newcommand{\f}{\mathcal{F}}
\newcommand{\g}{\mathcal{G}}
\newcommand{\s}{\mathcal{S}}
\newcommand{\ii}{\mathcal{I}}
\newcommand{\C}{\mathbb{C}}
\begin{document}

\title{Groupoid Quantales: a non \'etale setting}
\author{Alessandra Palmigiano\footnote{The research of the first author was supported by the VENI grant 639.031.726 of the Netherlands Organization for Scientific Research
    (NWO).}\, and Riccardo Re}
\date{}
\maketitle

\maketitle

\begin{abstract}
\noindent We establish a bijective correspondence involving a class of unital involutive
quantales and a class of non \'etale set groupoids whose space of units is a sober space. This class includes equivalence relations that arise from group actions. The resulting axiomatization of the class of quantales, as well as
the correspondence defined here, extend
the theory of \'etale groupoids
and their quantales \cite{Re07}.

\vspace{2pt} \noindent \textsl{Keywords}: unital involutive
quantale, topological groupoid, non \'etale groupoid.

\vspace{2pt} \noindent  \textsl{2000 Mathematics Subject
Classification}: 06D05, 06D22, 06D50, 06F07, 18B40, 20L05, 22A22,
54D10, 54D30, 54D80.
\end{abstract}
\section{Introduction}
Important examples of groupoids that are non \'etale abound. Typical examples are given by equivalence relations induced from group actions with fixed points. It is then natural to seek  algebraic descriptions of these groupoids, analogously to what is done for instance in \cite{Pat99, Renault, Re07} in the context of groupoids that are \'etale.
In this paper, we  establish a bijective correspondence involving a class of unital involutive
quantales and a class of 
groupoids whose set of units is a sober topological space. This correspondence extends, in a spatial setting,
the correspondence between localic \'etale groupoids
and inverse  quantal frames defined in \cite{Re07}.

The correspondence in \cite{Re07} has also been extended beyond the \'etale setting in \cite{PrRes09}, to a correspondence  between open groupoids and {\em open quantal frames}. As their name suggests, these quantales satisfy the frame distributivity condition, but are not required to be unital  (the inverse quantal frames in \cite{Re07} are exactly the {\em unital} open quantal frames in \cite{PrRes09}). The correspondence defined in this paper covers an alternative extension: our quantales are  unital, but do not need to be frames.

As already observed in \cite{PrRes09}, the essential difference between the groupoid-quantale correspondence in the \'etale and in  the non-\'etale setting lays in the role played by the inverse semigroup of $G$-sets of a groupoid. Indeed, all the information needed to reconstruct any \'etale groupoid is encoded in the inverse semigroup formed by the germs of its local bisections. The quantales associated with both \'etale groupoids and inverse semigroups, i.e.\ the inverse quantal frames, being characterized as the free join completions of the inverse semigroups, contain no extra information than the inverse semigroups themselves. 
However,  in the non-\'etale setting,  the inverse semigroup of $G$-sets is not enough to reconstruct the groupoid:  the missing information  governs the various possible ways in which any two $G$-sets of the groupoid intersect one another (notice that this is exactly the information content that becomes trivial in \'etale groupoids, because $G$-sets are closed under finite intersection). This extra information is stored in the quantale, which is why quantales are essential to this setting. 
In this paper, 
the role of germs in the reconstruction process is played by the classes of an equivalence relation that we refer to as the {\em incidence} relation, which encodes information on the incidence of any two $G$-sets at a point, in the language of quantales.

\commment{
 The observations about this difference has consequences to the theory of $C^\ast$-algebras: indeed, all the standard procedures for associating  $C^\ast$-algebras with topological groupoids $\g$ we are aware of actually associate the $C^\ast$-algebra with the inverse semigroup of local bisections of the groupoid (cf.\ \cite{Pat99}). By the remarks above, this means that the resulting $C^\ast$-algebra is the same as the one canonically associated with the \'etale groupoid $\tilde{\g}$ of germs of the inverse semigroup, which covers $\g$, in the sense that there is a surjective continuous map $\tilde{\g}\to\g$. If one wants to associate a $C^\ast$-algebra with $\g$ depending on its set structure and topology, we would need to associate a $C^\ast$ algebra directly with the quantale $\Q(\g,\mathcal{S})$,  $\mathcal{S}$ being the inverse semigroup of local bisections of $\g$. Indeed it is this quantale, and not the inverse semigroup only, that encodes information about the set and topological structure of $\g$.
}
As mentioned early on, our quantales are not in general frames. Correspondingly, their associated groupoids do not have a topological (or localic) structure on their spaces of arrows $G_1$. In place of topologies  we use designated  collections of $G$-sets that we refer to as {\em selection bases} (cf.\ Definition \ref{def:selfam}). Indeed, rather than being purely between quantales and groupoids, our correspondence is between quantales and {\em pairs} $(\g, \s)$ of groupoids and selection bases. In fact, these pairs can be regarded as categories on the topology of the space of units $G_0$ (cf.\ Remark \ref{rem:towards pointfree gener}). This observation paves the way to a pointfree generalization of this correspondence, which we develop in the companion paper \cite{PaRe10a}.

The results we present in this paper find their main motivation in a much wider research program that seeks noncommutative extensions of the Gelfand-Naimark duality \cite{Co94}. Interestingly, these results have also many points in common with and are potentially relevant to another area of research (which as far as our knowledge goes is disconnected from the first). This area belongs to order theory, algebra and logic, seeks  representability results for classes of relation algebras, and its research program is well exemplified by \cite{JoTa52}, where a certain class of relation algebras is concretely represented via groupoids. We believe that presenting our results in the spatial setting and making use of purely order-theoretic and topological techniques is useful in making the connections with this area more transparent  and in making these results more easily accessible to its community of researchers.

The paper is organized as follows: in Section 2 we give the basic definitions and properties of groupoids and quantales; in Section 3 we introduce our main groupoid setting of pairs $(\g, \s)$ and their associated {\em groupoid quantales} $\Q(\g, \s)$; in Section 4 we introduce the  {\em SGF-quantales}: these quantales are to  groupoid quantales what locales are to topologies. In the same section, a procedure is defined to associate a set groupoid $\g(\Q)$ with every SGF-quantale $\Q$. This procedure is based on the {\em incidence relation} and its properties, which are detailed in Subsection 4.1. In Section 5 we introduce the {\em spatial} SGF-quantales, prove that this class includes the groupoid quantales $\Q(\g, \s)$, and that if $\Q$ is spatial, then the set of units of $\g(\Q)$ can be made into a sober space. In Section 6 we prove that the back and forth correspondence between  spatial SGF-quantales and  the pairs $(\g, \s)$ is bijective. In Section 7 we explain in detail why, although it is not so by definition, this correspondence is compatible with the \'etale setting of \cite{Re07}. In Section 8 we conclude with two concrete examples.

\commment{
\noindent
1) In order to characterize a spatial topological {\em \'etale} groupoid the inverse semigroup of local bisections, or equivalently the $G$-sets, is sufficient: to get back the groupoid one only needs the {\em germs} of the local bisections.

\noindent
2) So what extra information comes from the quantale join-generated from the inverse semigroup of $G$-sets? Our answer is that this extra information is relevant exactly in the non-\'etale case: in this setting, it is exactly the quantale the governs the various possible ways in which any two $G$-sets of the groupoid can intersect each other. In this case, the inverse semigroup is not sufficient anymore to reconstruct the groupoid, and the role of germs in the reconstruction process is instead taken by classes of an equivalence relation that we refer to as the {\em incidence} relation, which encodes in the language of the quantale the incidence of any two $G$-sets at a point.

\noindent
3) All the standard procedures to associate a $C^\ast$-algebra to a topological groupoid $\g$ we are aware of actually associate the $C^\ast$-algebra to the inverse semigroup of local bisections associated to the groupoid, see for example \texttt{Paterson}. By the remarks above, this means that the resulting $C^\ast$-algebra is the same as the one canonically associated to an \'etale groupoid, let-s say $\tilde{\g}$, the groupoid of germs of the inverse semigroup, which covers $\g$, in the sense that there is a surjective continuous map $\tilde{\g}\to\g$. If one wants to associate a $C^\ast$-algebra to $\g$ depending on its set structure and topology, one should be able to associate a $C^\ast$ algebra directly to the quantale $\Q(\g,\mathcal{S})$, with $\mathcal{S}$ the inverse semigroup of local bisections of $\g$. Indeed it is this quantale, and not the inverse semigroup only, that encodes information about the set and topological structure of $\g$. We are currently working on applying the results of the present paper to this problem in the near future.
}
\section{Preliminaries}

\subsection{Strongly Gelfand quantales}

\label{sec:preliminaries} A {\em quantale} $\Q$
\cite{Mu86, Ro90} is a complete join-semilattice endowed with
an associative binary operation $\cdot$ that is completely
distributive in each coordinate, i.e.

D1: $c\cdot \bigvee I = \bigvee\{c\cdot q : q\in I\} $

D2: $\bigvee I \cdot c = \bigvee\{q\cdot c : q\in I\}$

\noindent for every $c\in \Q$, $I\subseteq \Q$. Since it is a
complete join-semilattice, $\Q$ is also a complete, hence bounded,
lattice. Let $0, 1$ be the lattice bottom and top of $\Q$,
respectively. Conditions $D1$ and $D2$ readily imply that $\cdot$ is
order-preserving in both coordinates and, as $\bigvee\varnothing =
0$,
 that $c\cdot 0 = 0 = 0\cdot c$ for every $c\in \Q$. $\Q$ is {\em unital} if
there exists an element $e\in \Q$ for which

U: $e\cdot c = c = c\cdot e$ for every $c\in \Q$,

\noindent and is {\em involutive} if it is endowed with a unary
operation $\ast$ such that, for every $c, q\in \Q$ and every
$I\subseteq \Q$,

I1: $c^{\ast\ast} = c$.

I2: $(c\cdot q)^{\ast} = q^{\ast}\cdot c^{\ast}$.

I3: $(\bigvee I)^{\ast} = \bigvee\{q^{\ast} : q\in I\}.$

%
\vskip1mm\noindent Relevant examples of unital
involutive quantales are:\\
{\em 1.} The quantale $\p(R)$ of subrelations of a given equivalence
relation $R\subseteq X\times X$.
\\
{\em 2.} The quantale $\p(G)$, for every group $G$.
\\
{\em 3.} Any frame $\Q$, setting $\cdot:=\wedge$, $\ast:=\ $id and
$e:= 1_{\Q}$.\\
 A {\em homomorphism} of (involutive) quantales is a map
$\varphi: \Q\rightarrow \Q'$ that preserves $\bigvee$, $\cdot$ (and
$\ast$). If $\Q, \Q'$ are unital quantales, then $\varphi$ is {\em
unital} if $e'\leq \varphi(e)$ and is {\em strictly unital} if $\varphi(e) = e'$. Notice that since every
homomorphism is completely join-preserving, then $\varphi(0) = \varphi(\bigvee
\varnothing) = \bigvee \varnothing = 0$. However, a homomorphism of
quantales does not need to preserve the lattice top. For example, if
$R\subset S$ are equivalence relations on  $X$, then the inclusion
$\p(R)\to \p(S)$ is a strictly unital homomorphism of quantales
that does not. If $\varphi(1_{\Q})= 1_{\Q'}$ then $\varphi$ is {\em strong}.

\noindent Let $\Q$ be a unital involutive quantale. An element
$f\in \Q$ is {\em functional} if $f^\ast\cdot f\leq e$ and is a {\em
partial unit}  if both $f$ and $f^\ast$ are
functional\footnote{If $\Q = \p(R)$ for some equivalence relation
$R\subseteq X\times X$, then functional elements (partial units) are
exactly the graphs of
 (invertible) partial maps $f$ on $X$. 
}. The set of functional elements (resp.\ partial units) will be denoted by
$\f(\Q)$ (resp. $\ii(\Q)$). It is easy to verify that $e\in \ii(\Q)$ and
$\ii(\Q)$ is closed under composition and involution of $\Q$.
 Moreover, if $f\leq g\in
\ii(\Q)$ then $f\in \ii(\Q)$.

 Let $\Q_e =\{c\in \Q : c\leq e\}$.
$\Q_e\subseteq \ii(\Q)$, moreover, $\Q_e$ is a unital involutive
subquantale of $\Q$.
\begin{definition}
A unital involutive quantale $\Q$ is {\em strongly Gelfand} (or an {\em SG-quantale}) if
\begin{itemize}
\item[SG.] $a\leq a\cdot a^\ast\cdot a$ for every $a\in\Q$.
\end{itemize}
\end{definition}
\noindent Recall that $\Q$ is  a {\em Gelfand} quantale (see also \cite{Ro90})
if $a=a\cdot a^\ast\cdot a$ for every right-sided element of $\Q$
($a\in \Q$ being {\em right-sided} if $a=a\cdot 1$). It is immediate
to see that  every SG-quantale is Gelfand, and that $f=f\cdot
f^\ast\cdot f$ for every SG-quantale $\Q$ and every $f\in \f(\Q)$.
We will simplify  notation and write $a\cdot b$ as $ab$.\\
A quantale $\Q$ is  {\em supported} if it is endowed with a {\em support}, which is a completely join-preserving map $\varsigma:\Q\to \Q_e$ s.t.\
$\varsigma (a)\leq aa^\ast$ and $a\leq \varsigma(a) a$ for every $a\in \Q$.  For every supported quantale $\Q$, $\Q_e$ coincides with $\varsigma\Q$ and it is a locale with $ab = a\wedge b$ and trivial involution (cf.\ \cite[Lemma II.3.3]{Re06}). It is immediate to see that every supported quantale is an SG-quantale. Therefore the item 1 of the following proposition shows that the fundamental property of supported quantales  mentioned above generalizes to SG-quantales.
Even more importantly, the items 3 and 4 of the following proposition show that  the crucial connection between supported quantales and inverse monoids \cite[Theorem II.3.17.1]{Re06} generalizes to SG-quantales\footnote{We thank Pedro Resende for pointing to our attention this interpretation of items 1 and 3 of Proposition \ref{prop:qe}.}:
\begin{prop}\label{prop:qe} For every SG-quantale $\Q$,
\begin{enumerate} \item the subquantale $\Q_e$ is a frame: in particular,
involution $\ast$ coincides with the identity, and composition
$\cdot$ with $\wedge$.
\item For every $f,g\in\f(\Q)$ such that $f\leq g$,  $f=g\ $ iff $\ f f^\ast=g
g^\ast$.
\item $\ii(\Q)$ is an inverse monoid\footnote{An \emph{inverse
semigroup} (cf. \cite{Pat99}) is a semigroup such that for every
element $x$ there exists a unique inverse, i.e.\ an element $y$ such
that $x = xyx$ and $y = yxy$. Equivalently, an inverse semigroup is
a semigroup such that every element has some inverse and any two
idempotent elements commute. An {\em inverse monoid} is an inverse semigroup with a multiplicative unit.} whose set of idempotents coincides with $\Q_e$, and whose natural order coincides with the order inherited from $\Q$.
\item The assignment $\Q\mapsto \ii(\Q)$ extends to a functor $\ii$ from the category of SG-quantales to the category of inverse monoids.
\end{enumerate}
\end{prop}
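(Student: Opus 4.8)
\noindent The plan is to make the identity $f = f f^\ast f$ (valid for every functional $f$, hence for every partial unit, as already noted from \textbf{SG}) do the bulk of the work, after recording two facts about involution: since $\ast$ preserves arbitrary joins it is order-preserving, and applying $\ast$ to the unit law shows $e^\ast$ is a two-sided unit, so $e^\ast = e$ and $\ast$ restricts to $\Q_e$. For item 1 I would argue entirely inside $\Q_e$, which is a principal downset of $e$ and hence closed under the joins and meets of $\Q$, so a complete lattice. Self-adjointness comes from chaining \textbf{SG} with order-preservation: for $a\le e$ one has $a\le aa^\ast a\le aa^\ast\le a^\ast$, and applying $\ast$ gives $a^\ast\le a$, so $a=a^\ast$; then $a\le aa^\ast=a^2\le ae=a$ yields $a=a^2$. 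Composition is meet: $ab\le a\wedge b$ is immediate from $a,b\le e$, and writing $c=a\wedge b$ gives $c=c^2\le ab$ for the reverse. Frame distributivity is then just $c\wedge\bigvee I = c\cdot\bigvee I = \bigvee\{c\cdot q\} = \bigvee\{c\wedge q\}$ via \textbf{D1}.

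Item 2 is a one-line computation: assuming $f\le g$ and $ff^\ast=gg^\ast$, I write $g=gg^\ast g=ff^\ast g$, and since $f\le g$ forces $f^\ast g\le g^\ast g\le e$, I get $g=f(f^\ast g)\le fe=f$, whence $f=g$. For item 3, $\ii(\Q)$ is already a monoid under $\cdot$ closed under $\ast$, and $f=ff^\ast f$ together with its adjoint exhibits $f^\ast$ as a semigroup inverse of $f$, so every element is regular. By the idempotent-commutation characterization of inverse semigroups recalled in the footnote, it remains to pin down the idempotents and check they commute. I claim the idempotents of $\ii(\Q)$ are exactly $\Q_e$: by item 1 every element of $\Q_e$ is idempotent, and conversely if $f\in\ii(\Q)$ satisfies $f^2=f$, then $f^\ast f=f^\ast f^2=(f^\ast f)f\le ef=f$, and applying $\ast$ gives $f^\ast f\le f^\ast$, so that $f=f(f^\ast f)\le ff^\ast\le e$. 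Since $\Q_e$ is a frame its elements commute under $\cdot=\wedge$, the idempotents commute, and $\ii(\Q)$ is an inverse monoid with unit $e$.

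To finish item 3 I would match the natural order $f\preceq g$ (i.e.\ $f=pg$ for some idempotent $p$) with the order of $\Q$: one direction is $f=pg\le eg=g$, and for the other I take $p=ff^\ast\le e$ and observe that $f\le g$ gives both $f=ff^\ast f\le ff^\ast g$ and $ff^\ast g=f(f^\ast g)\le fe=f$, so $pg=ff^\ast g=f$ and $f\preceq g$. Item 4 is then formal once the morphisms are fixed as strictly unital involutive quantale homomorphisms: such a $\varphi$ satisfies $\varphi(f)^\ast\varphi(f)=\varphi(f^\ast f)\le\varphi(e)=e'$ and dually, so it restricts to $\ii(\varphi):\ii(\Q)\to\ii(\Q')$, which preserves $\cdot$, $\ast$ and the unit and is therefore an inverse-monoid homomorphism; functoriality is automatic since $\ii(\varphi)$ is a restriction of $\varphi$.

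I expect the main obstacle to be the inclusion in item 3 showing that an idempotent partial unit lies in $\Q_e$: the inequalities $f^\ast f\le f$ and $f\le ff^\ast$ are the one genuinely non-routine deployment of \textbf{SG}, and arranging the self-adjointness chain of item 1 in the correct order is the other delicate point, since composition-is-meet, the frame structure, and the identification of idempotents all rest on it.
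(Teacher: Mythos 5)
Your proof is correct and takes essentially the same route as the paper's: the same SG-chains give self-adjointness and idempotence on $\Q_e$ (hence $\cdot=\wedge$), item 2 is the same one-line computation $g=gg^\ast g=ff^\ast g\le f$, item 3 rests as in the paper on identifying the idempotents of $\ii(\Q)$ with $\Q_e$ (your variant $f^\ast f\le f^\ast$, hence $f=ff^\ast f\le ff^\ast\le e$, is equivalent to the paper's $ff^\ast\le f^\ast$ computation), and item 4 is the same restriction argument for strictly unital homomorphisms. You merely spell out two details the paper leaves implicit — the coincidence of the natural order with the quantale order via $p=ff^\ast$, and frame distributivity via D1 — which is a welcome completion rather than a different approach.
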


\begin{proof}
1. Let $d\leq e$. By SG, $d\leq d d^\ast d\leq e d^\ast e =
d^\ast$, and likewise, $d^\ast\leq d$, hence involution is identity
on $\Q_e$. If $c\leq e$, then $cc = c$: indeed, $cc\leq ce = c$, and
by SG and the fact that involution is identity on $\Q_e$, $c =
cc^\ast c = (cc)c\leq (cc) e = cc$. Let $d_1, d_2\leq e$. Then
$d_1d_2\leq d_1 e = d_1$ and $d_1d_2\leq e d_2 = d_2$, so
$d_1d_2\leq d_1\wedge d_2$. Conversely, if $c\leq d_1$ and $c\leq
d_2$, then $c = cc \leq d_1d_2$, hence $d_1\wedge d_2\leq d_1d_2$.
\\
2. By SG and since $f\leq g$ implies $f^\ast\leq g^\ast$, $g =
gg^\ast g\leq ff^\ast g\leq f g^\ast g\leq f e = f$.
\\
3. By SG, $ff^\ast f= f$ and $f^\ast ff^\ast = f^\ast$ for every
$f\in \ii(\Q)$. Hence, it is enough to show that the restriction of
the product to the idempotent elements of  $\ii(\Q)$ is commutative.
This follows from item 1 above and from the fact that for every
$f\in \ii(\Q)$, $ff = f$ iff $f\leq e$: Indeed, if $f\leq e$, then
by (1), $ff = f\wedge f = f$. Conversely, if $ff= f$, then $f^\ast =
(ff)^\ast = f^\ast f^\ast$, hence $ff^\ast = ff^\ast f^\ast \leq
ef^\ast =f^\ast$, and so $f = ff^\ast f \leq f^\ast f\leq e$. Since $\Q_e\subseteq \ii(\Q)$,
this also shows that the set of idempotent elements of $\ii(\Q)$ coincides with $\Q_e$. Hence,
the natural order of the inverse monoid $\ii(\Q)$ is defined as follows: $f\leq g$ iff $f= gh$ for some $h\in \Q_e$,
and therefore it coincides with the order inherited from $\Q$.\\
4. Every strict homomorphism of unital involutive quantales maps partial units to partial units,
hence it restricts to a homomorphism of inverse monoids.
\end{proof}
\commment{
\noindent If $\Q= \p(R)$ for some equivalence relation $R$, then
$\Q$ is SG 
and for every $f\in \f(\Q)$, $ff^\ast$ can be identified with the
domain of the partial map of which $f$ is the graph. So item 2 of
the previous lemma says that if $f$ and $g$ are partial maps and $g$
is a restriction of $f$, then $f$ and $g$ coincide iff their domains
coincide.
}

\subsubsection{A natural action}
\label{subsec:action}
For every SG-quantale $\Q$,  a natural action\footnote{In \cite{Re07} (discussion before Lemma 4.5) a similar action is defined on the whole of a stable quantale $\Q$ on $\Q_e = \varsigma \Q$ by the assignment $(a, h)\mapsto \varsigma (ah)$, which makes $\Q$ into a $\varsigma\Q$-module.} can be defined of the inverse semigroup $\ii(\Q)$ on
$\Q_e$: indeed, for every
 $f\in \ii(\Q)$ and every $h\in \Q_e$ let $h^f=f^\ast h f$.
This is indeed an action of  $\ii(\Q)$ because of the identity
$(h^f)^g=h^{fg}$.
\begin{lemma}\label{lemma:action}
For every $h\in\Q_e$ and $f\in\ii(\Q)$,
\begin{enumerate}
\item $hf = fh^f$ and $f^\ast h = h^f f^\ast$.
\item If $h\leq f f^\ast$ then $h=f h^f f^\ast$.
\end{enumerate}
\end{lemma}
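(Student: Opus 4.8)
The plan is to exploit the frame structure of $\Q_e$ established in Proposition \ref{prop:qe}(1)—where composition is meet and involution is the identity—together with the fact, noted just after the definition of SG-quantale, that $f=ff^\ast f$ for every functional $f$.

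First I would record two preliminary facts. Since $h\leq e$ we have $h^f=f^\ast h f\leq f^\ast e f=f^\ast f\leq e$, so $h^f\in\Q_e$; and computing $(hf)^\ast(hf)=f^\ast h h f=f^\ast h f=h^f\leq e$ (using $hh=h$ in the frame $\Q_e$) shows that $hf\in\f(\Q)$. Recognizing that $hf$ is functional is the key step, since it is precisely what licenses the identity $hf=(hf)(hf)^\ast(hf)$.

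For item 1 I would then reduce both $hf$ and $fh^f$ to the common value $(h\wedge ff^\ast)f$. On one hand, $hf=(hf)(hf)^\ast(hf)=hf f^\ast h h f=hf f^\ast h f$, and grouping the three $\Q_e$-factors gives $h\,ff^\ast\,h=h\wedge ff^\ast$, so $hf=(h\wedge ff^\ast)f$. On the other hand, $fh^f=ff^\ast h f$, and since $ff^\ast,h\in\Q_e$ we get $ff^\ast h=h\wedge ff^\ast$, whence $fh^f=(h\wedge ff^\ast)f$ as well. This yields $hf=fh^f$; the companion identity $f^\ast h=h^f f^\ast$ then follows by applying $\ast$ and using that involution fixes $h$ and $h^f$, both of which lie in $\Q_e$.

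Finally, for item 2 I would feed item 1 back in: $fh^f f^\ast=(fh^f)f^\ast=(hf)f^\ast=h(ff^\ast)=h\wedge ff^\ast$, again because $h,ff^\ast\in\Q_e$. Under the hypothesis $h\leq ff^\ast$ this meet is simply $h$, giving $h=fh^f f^\ast$. The only real care needed throughout is the bookkeeping of which subexpressions lie in $\Q_e$, so that the frame identities (product $=$ meet, involution $=$ identity) may be applied; I expect no genuine obstacle beyond correctly isolating the idempotent factors.
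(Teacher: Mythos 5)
Your proof is correct and rests on exactly the same two ingredients as the paper's: the frame structure of $\Q_e$ from Proposition \ref{prop:qe}.1 (product $=$ meet, involution $=$ identity, hence commutativity of $h$ and $ff^\ast$) and the identity $f = ff^\ast f$ for partial units, the paper's proof of item 1 being just the direct chain $hf = h\cdot(ff^\ast) f = (ff^\ast)\cdot hf = fh^f$, with item 2 dismissed as immediate. Your detour through the functionality of $hf$ and the triple identity $hf = (hf)(hf)^\ast(hf)$ is sound but superfluous, since both of your normal-form computations reducing each side to $(h\wedge ff^\ast)f$ collapse to that same one-line substitution-and-commutation.
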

\begin{proof}
1. Since $f=ff^\ast f$ and because the product is commutative
in $\Q_e$, we get $h f=h \cdot (ff^{\ast}) f = (ff^{\ast})\cdot h f
= f h^f$. The second equality goes analogously 2. Immediate.
\end{proof}

\subsection{Groupoids}

\begin{definition}
\label{def:groupoid discrete} A {\em  set groupoid} is a tuple
$\mathcal{G}=(G_0, G_1,m,d,r,u,i)$, s.t.\ :

\noindent
G1. $G_0$ and $G_1$ are sets;

\noindent
G2. $d,r:G_1\to G_0$ and $u:G_0\to G_1$ s.t. $d(u(p)) = p = r(u(p))$ for
every $p\in G_0$;

\noindent
G3. $m:(x,y)\mapsto xy$ is an associative map defined on $G_1\times_0 G_1 = \{(x,y)\ |\ r(x)=d(y)\}$ and s.t. $d(xy) = d(x)$ and $r(xy) = r(y)$;

\noindent
G4. $xu(r(x)))=x = u(d(x))x$ for every $x\in G_1$.;

\noindent
G5.the map $i:G_1\to G_1$ denoted as $i(x) = x^{-1}$ is s.t.
$xx^{-1}=u(d(x))$,  $x^{-1}x=u(r(x))$, $d(x^{-1})= r(x)$ and
$r(x^{-1})= d(x)$ for every $x\in G_1$. \\
The identities in G2-G5 can be equivalently summarized by saying that the following diagram commutes:

\begin{center}
\begin{tikzpicture}
\draw[black, ->]  (0.3, 0.2) .. controls (1, 0.6) and (2, 0.6) .. (2.8, 0.2);%
\draw[black, ->]  (0.3, -0.2) .. controls (1, -0.6) and (2, -0.6) .. (2.8, -0.2);%
\draw[black, ->]  (-0.13, 0.16) .. controls (-0.3, 0.55) and (0.3, 0.55) .. (0.13, 0.16);%

\draw[black, <-]  (0.4, 0) -- (2.6, 0);%
\draw[black, <-]  (-0.4, 0) -- (-2, 0);%
\draw (0, 0.65) node {\small{$i$}};
\draw (1.5, 0.65) node {\small{$d$}};
\draw (1.5, -0.38) node {\small{$r$}};
\draw (1.5, 0.12) node {\small{$u$}};
\draw (-1.3, 0.12) node {\small{$m$}};

\draw (-3, 0) node {$G_1\times_0 G_1$};
\draw (0, 0) node {$G_1$};
\draw (3, 0) node {$G_0$};
\end{tikzpicture}
\end{center}

\end{definition}
\begin{exa}
\label{ex: groupoids}
$\quad$ \\
\noindent
{\em 1.} For any equivalence relation $R\subseteq X\times X$, the tuple $(X, R, \circ, \pi_1, \pi_2, \Delta,
()^{-1})$ defines a groupoid. Of particular interest are versions of this examples where $X$ is a topological space: for instance, the space of Penrose tilings \cite{Co94,MuRe05,PaRe10} is such an example and its associated groupoid is \'etale.

\noindent
{\em 2.} For any group $(G,\cdot, e, ()^{-1})$, the tuple
$(\{e\}, G, \cdot, d, r, u, ()^{-1})$ is a groupoid,
and the equalities G4 and G5
just restate the group axioms.

\noindent
{\em 3.} The following example is a
special but important case of the first one: every
topological space $X$ can be seen as a groupoid by setting
$G_1=G_0=X$ and identity structure maps. In this case,
$G_1\times_{0}G_1=\{(x,x)\ |\ x\in X\}$ and $xx =x$ for every
$x\in X$.

\noindent
{\em 4.} A groupoid can be associated with any action\footnote{For any group $G$, a {\em (left) action} of $G$ on a set $X$ is a function $\cdot: G\times X\rightarrow X$
s.t.\ for all $g, h \in G$ and $x \in X$, $(gh)·x = g·(h·x)$ and
$e·x = x$  ($e$ being the identity of $G$).} $G\times X\to X$ of a group $G$ on a set $X$, by setting
$G_1=G\times X$, $G_0=X$, and for all $g, h\in G$ and $x, y\in X$,  $d(g,x)=x$,  $r(g,x)=gx$,  $u(x)=(e,x)$  ($e\in G$ being the identity element), and $(g,x)\cdot(h,y)=(hg,x)$ whenever $y=gx$.

\noindent
{\em 5.} To a group action as above, another groupoid can be associated, which is given by the equivalence relation $R\subset X\times X$ defined by $xRy$ iff there exists some $g\in G$ such that $y=gx$.
\end{exa}

\noindent Some useful facts about groupoids are reported in the following:

\begin{lemma}
\label{lemma:properties of groupoids}
 For all $p\in G_0$, $x, y\in G_1$,\\ 
1. $u(p)^{-1} = u(p)$,\\
2. $x = xx^{-1}x$ and $x^{-1} = x^{-1}xx^{-1}$,\\
3. if $xy^{-1}, x^{-1}y\in u[G_0]$ then $x=y$,\\
4. if $x = xyx$ and $yxy=y$, then $y = x^{-1}$,\\
5. $(x^{-1})^{-1}=x$,\\
6. $(xy)^{-1}= y^{-1}x^{-1}$.
\end{lemma}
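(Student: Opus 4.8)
The plan is to derive all six facts from the unit and inverse axioms G4 and G5 together with associativity, treating part 4 (uniqueness of the inverse) as the central lemma from which parts 5 and 6 follow formally. I would first dispose of parts 1 and 2 by direct substitution. For part 2, G5 gives $xx^{-1}=u(d(x))$, so $xx^{-1}x=u(d(x))x=x$ by G4; symmetrically $x^{-1}xx^{-1}=u(r(x))x^{-1}=x^{-1}$, using $d(x^{-1})=r(x)$. For part 1, note that $d(u(p)^{-1})=r(u(p))=p$, so G4 applied to $u(p)^{-1}$ gives $u(p)u(p)^{-1}=u(p)^{-1}$; but G5 gives $u(p)u(p)^{-1}=u(d(u(p)))=u(p)$, whence $u(p)^{-1}=u(p)$.

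The heart of the argument is part 4. Assuming $x=xyx$ and $yxy=y$ (which force $r(x)=d(y)$ and $r(y)=d(x)$ for the products to be defined), I would first establish $yx=x^{-1}x$. Substituting $x=xyx$ into $x^{-1}x=u(r(x))$ and regrouping by associativity yields $u(r(x))=x^{-1}(xyx)=(x^{-1}x)(yx)=u(r(x))(yx)=yx$, the last step by G4 since $d(yx)=d(y)=r(x)$. Then, using part 2, $x^{-1}=x^{-1}xx^{-1}=(yx)x^{-1}=y(xx^{-1})=yu(d(x))=y$, the final equality by G4 since $d(x)=r(y)$. This gives $y=x^{-1}$.

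With part 4 available, parts 5 and 6 become formal. For part 5, part 2 already provides $x^{-1}=x^{-1}xx^{-1}$ and $x=xx^{-1}x$, which are exactly the two hypotheses of part 4 with the roles $a=x^{-1}$ and $b=x$; hence $x=(x^{-1})^{-1}$. For part 6, I would apply part 4 with $a=xy$ and $b=y^{-1}x^{-1}$ (well-defined since $r(x)=d(y)$), checking the two identities $(xy)(y^{-1}x^{-1})(xy)=xy$ and $(y^{-1}x^{-1})(xy)(y^{-1}x^{-1})=y^{-1}x^{-1}$ by collapsing the inner products with G5 and G4; for instance $xyy^{-1}x^{-1}=xu(d(y))x^{-1}=xx^{-1}=u(d(x))$, after which G4 finishes.

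Finally, for part 3 the hypotheses require $xy^{-1}$ and $x^{-1}y$ to be defined, which forces $r(x)=r(y)$ and $d(x)=d(y)$. Writing $xy^{-1}=u(p)$, comparison of domains gives $p=d(x)=d(y)$, and right-multiplying by $y$ yields $x=xu(r(x))=x(y^{-1}y)=(xy^{-1})y=u(p)y=y$. The main obstacle is part 4; everything else is either a one-line substitution or a formal consequence of it. The only point requiring constant care throughout is that every product written down is in fact defined, i.e.\ the relevant $r=d$ conditions hold, which in each case is forced by the stated hypotheses.
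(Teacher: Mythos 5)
Your proof is correct and follows essentially the same route as the paper's: parts 1--3 by direct computation from G2--G5, part 4 established by multiplying through and collapsing products against units, and parts 5 and 6 derived formally from part 4 together with part 2, exactly as in the paper. The only (harmless) deviations are that your part 4 never invokes the hypothesis $yxy=y$ --- which is indeed redundant in a groupoid, since $x=xyx$ with the composability conditions already forces $y=x^{-1}$ --- and your part 3 is a slightly more compact variant of the paper's computation.
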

\commment{
\begin{proof}
1. By  G4, G5 and G2, $u(p)^{-1} = u(p)^{-1}u(r(u(p)^{-1})) = u(p)^{-1}u(d(u(p))) = u(p)^{-1}u(p) = u(r(u(p)))= u(p)$.

\noindent
2. By G5 and G4, $xx^{-1}x = u(d(x))x = x$. The second one is
analogous.

\noindent
3. Assume $xy^{-1}= u(p)$ and $x^{-1}y= u(q)$ for some $p, q\in
G_0$. Then, by G2, G3 and G5, $p= d(u(p))= d(xy^{-1}) = d(x)$ and $p
= r(u(p))= r(xy^{-1}) = r(y^{-1}) = d(y)$. Likewise, using the
second part of the assumption, one shows that $r(x)=q = r(y)$.
Hence, by G5, $x^{-1}x =u(r(x))= u(q)= x^{-1}y = u(r(y))= y^{-1}y$
and $xx^{-1}=u(d(x))= u(p)= xy^{-1} = u(d(y))= yy^{-1}$. By G4 and
item 2 above, $x = u(d(x))x u(r(x))= u(d(y)) xx^{-1} y = u(d(y)) y
y^{-1}y = u(d(y))y = y$.

\noindent
4. Since $xyx$ is well defined by assumption, $r(x)=d(y)$ and $d(x)=
r(y)$, so $x^{-1}x = x^{-1} xyx = u(r(x))yx = u(d(y))yx = yx = y
xu(r(x)) = y x u(d(y))= yx yy^{-1} = yy^{-1}$, and likewise $xx^{-1}
= xy = y^{-1}y$, therefore $y= yy^{-1}y=x^{-1}xy= x^{-1}xx^{-1} =
x^{-1}$.

\noindent
5. Immediate consequence of items 2 and 4 above.

\noindent
6. By (4), it is enough to show that
$(y^{-1}x^{-1})(xy)(y^{-1}x^{-1}) = y^{-1}x^{-1}$ and
$(xy)(y^{-1}x^{-1})(xy) = xy$. As $r(x)= d(y)= r(y^{-1})$,
$y^{-1}x^{-1}x = y^{-1}u(r(x))= y^{-1}u(r(y^{-1})) = y^{-1}$, so by
(2), $(y^{-1}x^{-1})(xy)(y^{-1}x^{-1}) = y^{-1}yy^{-1}x^{-1}=
y^{-1}x^{-1}$.
\end{proof}
}

\noindent For every groupoid $\g$, $\p(G_1)$ can be given
the structure of a unital involutive quantale  (see also
\cite{Re06} and \cite{Re07} 1.1 for a more detailed discussion): indeed, the product
and involution on $G_1$ can be lifted to $\p(G_1)$ as follows:
\begin{center}
$S\cdot T=\{x\cdot y\ |\ x\in S,\ y\in{T}$ and $r(x)=d(y)\}\quad $
 $\quad S^\ast=\{ x^{-1}\ |\ x\in S\}$. \end{center}
\noindent Denoting by $E$ the image of the structure map $u:G_0\to
G_1$, we get:
\begin{fact}\label{fact:discrete group quantale}
$\langle\p(G_1), \bigcup, \cdot, ()^{\ast}, E\rangle$ is a strongly Gelfand quantale.
\end{fact}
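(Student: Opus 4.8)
The plan is to verify in turn the defining conditions of an SG-quantale: that $\langle \p(G_1), \bigcup\rangle$ is a complete join-semilattice carrying an associative, completely join-distributive product $\cdot$ (conditions D1, D2), that $E$ is a two-sided unit (condition U), that $()^\ast$ satisfies I1--I3, and finally the strong Gelfand inequality SG. The guiding principle throughout is that each quantale-level identity is obtained by lifting pointwise the corresponding groupoid-level identity recorded in Lemma \ref{lemma:properties of groupoids}; the only real care needed is to track the composability side-condition $r(x)=d(y)$ that guards the definition of $\cdot$.

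Since $\p(G_1)$ ordered by inclusion is a complete lattice with $\bigvee=\bigcup$, the complete join-semilattice requirement is immediate, with bottom $\varnothing$ and top $G_1$. For distributivity I would observe that $z\in S\cdot\bigcup_i T_i$ holds exactly when $z=xy$ with $x\in S$, $r(x)=d(y)$, and $y\in T_i$ for some $i$, which is literally the condition $z\in\bigcup_i(S\cdot T_i)$; D2 is symmetric. Associativity of $\cdot$ reduces to associativity of $m$: using $r(xy)=r(y)$ from G3, a triple $(x,y,z)$ contributes to $(S\cdot T)\cdot U$ precisely when $r(x)=d(y)$ and $r(y)=d(z)$, which is exactly the condition for it to contribute to $S\cdot(T\cdot U)$, and on such triples $(xy)z=x(yz)$.

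For the unit, note that $r(u(p))=p=d(u(p))$ by G2, so $E\cdot S=\{u(p)y : y\in S,\ p=d(y)\}$, and by G4 each such product equals $u(d(y))y=y$; hence $E\cdot S=S$, and $S\cdot E=S$ follows symmetrically from $xu(r(x))=x$. The involution conditions come directly from Lemma \ref{lemma:properties of groupoids}: I1 from $(x^{-1})^{-1}=x$ (item 5), I3 from the trivial set-theoretic identity $(\bigcup I)^\ast=\bigcup\{S^\ast : S\in I\}$, and I2 from item 6 together with G5. For I2 the point to check is that $xy$ is composable iff $y^{-1}x^{-1}$ is: indeed $d(x^{-1})=r(x)$ and $r(y^{-1})=d(y)$ by G5, so the condition $r(y^{-1})=d(x^{-1})$ is equivalent to $d(y)=r(x)$, and on composable pairs $(xy)^{-1}=y^{-1}x^{-1}$, yielding $(S\cdot T)^\ast=T^\ast\cdot S^\ast$.

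Finally, for SG I would fix $S\subseteq G_1$ and $x\in S$ and show $x\in S\cdot S^\ast\cdot S$. By item 2 of Lemma \ref{lemma:properties of groupoids}, $x=xx^{-1}x$; here $x\in S$, $x^{-1}\in S^\ast$, $x\in S$, and the products are legitimate because $d(x^{-1})=r(x)$ makes $xx^{-1}$ defined with $xx^{-1}=u(d(x))$ by G5, whence $r(xx^{-1})=d(x)=d(x)$ makes $(xx^{-1})x$ defined. Thus $x=xx^{-1}x\in S\cdot S^\ast\cdot S$, and so $S\subseteq S\cdot S^\ast\cdot S$. None of these steps is deep; the one place a careless argument could go astray, and hence the main obstacle, is the repeated verification that the composability side-conditions are preserved under involution and under the triple product of SG, which is exactly where the groupoid identities of Lemma \ref{lemma:properties of groupoids} (items 5 and 6, together with G5) are brought to bear.
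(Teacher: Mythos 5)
Your proposal is correct and takes essentially the same route as the paper: the paper's own proof consists of the single observation that SG follows from Lemma \ref{lemma:properties of groupoids}.2 (the identity $x=xx^{-1}x$), exactly your final step, while the unital involutive quantale axioms for $\p(G_1)$ are taken as known from \cite{Re06} and \cite{Re07}. Your explicit verification of D1--D2, U, and I1--I3, including the bookkeeping of composability side-conditions via G2--G5 and items 5 and 6 of the Lemma, merely fills in routine details the paper delegates to those references.
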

\begin{proof}
SG follows from Lemma \ref{lemma:properties of groupoids}.2.
\end{proof}

\section{SP-groupoids and their quantales}
In what follows, a \emph{groupoid} is a set groupoid $\g=(G_0,G_1)$ s.t.\ $G_0$ is a sober  space\footnote{For every topological space $X$, a closed set $C$ is {\em irreducible} iff $C\neq\varnothing$ and for all closed sets $K_1, K_2$, $C\subseteq K_1\cup K_2$ implies that $C\subseteq K_1$ or $C\subseteq K_2$. A {\em sober} space is a topological space s.t.\ its irreducible closed sets are exactly the topological closures of singletons.}. For every $p\in G_0$, let $\overline{p}$ denote the topological closure of $\{p\}$. The topology on $G_0$ will be denoted by $\Omega(G_0)$.  We do not fix any a priori topology on $G_1$.

\begin{definition}\label{def:localbisection} A {\em local bisection} of a groupoid $\g$ is a map $s:U\to G_1$ such that $d\circ s=\mbox{id}_U$ and $t=r\circ s$ is a partial homeomorphism $t:U\to V$ between open sets of $G_0$. A {\em bisection image}\footnote{Images of local bisections are sometimes referred to as $G$-{\em sets} (cf. \cite{Renault}). However, since ``$G$-sets'' usually refer to sets equipped with a group action, we propose an alternative name here.} of $\g$ is the image of some local bisection of $\g$.
Let $\s(\g)$ be the collection of the bisection images  of $\g$.\end{definition}
\noindent Notice that since $d\circ s=\mbox{id}_U$, local bisections are completely determined by their corresponding bisection images. We will denote bisection images by $S, T$, possibly indexed, and their corresponding local bisections will be $s, t$, possibly indexed.

\vspace{2pt}

\noindent Since $G_1$ is not endowed with any topology, the local bisections according to the definition above are not required to be continuous, as is the case e.g.\ in \cite{PrRes09,Re07}. This design choice can be motivated as follows. First, there exists at least a topology on $G_1$ w.r.t.\ which the local bisections of Definition \ref{def:localbisection} are always continuous, and it is defined as follows:  Let $R\subseteq G_0\times G_0$ be the  equivalence relation  induced by $G_1$ and let $\pi: G_1\to R$ be the map defined as $\pi(x)=(d(x),r(x))$;  the open subsets of $G_1$ are those of the form  $\pi^{-1}[A]$, for any open subset $A\subseteq R$ in the product topology inherited from  $G_0\times G_0$. This topology is in general not even $T_0$. However, even if $\s$ is defined as the family of local bisections that are continuous w.r.t.\ some given topologies on $G_0$ and on $G_1$, if the resulting topological groupoid $\g$ is not \'etale, then the quantale $\Q(\g, \s)$ in Definition \ref{def:TGQ} below will contain the topology of $G_1$ as a subquantale but will not coincide with it, nor will this topology be  uniquely identifiable inside $\Q(\g, \s)$. So the topology on $G_1$ is a piece of information that cannot be retained along the back-and-forth correspondence defined in this paper. On the other hand, the absence of topology on $G_1$ allows for a greater generality: for instance, $G_1$ can be taken  as a set endowed with a measure (typically a Haar measure) and, correspondingly, the local bisections can be taken as measurable maps defined on open sets of $G_0$. This could be interesting in view of possible applications of this setting to the theory of $C^\ast$-algebras. Also, not assuming any topology on $G_1$ allows in principle for a greater choice of selection bases (cf.\ Example \ref{ex:selection bases}).

\commment{Puo' essere interessante considerare il caso in cui G_0 spazio topologico e G_1 spazio con una misura, e prendere local bisections che siano solo misurabili come mappe (parzialmente definite) da G_0 a G_1. Questo in vista di possibili applicazioni a C*^\ast-algebre associate a gruppoidi non-etale. 4 Vogliamo avere la liberta' di restringere le s.b. S a piacimento, anche se esse sono definite continue per gruppoide topologico G_0,G_1. Vedi esempio delle azioni dei gruppi in cui se ne possono considerare due distinte.

Motivi per non considerare topologia su G_1

1 Anche se si definisce S come una famiglia di local bisections continue rispetto a topologie assegnate di G_0 e G_1, il quantale generato dalle loo immagini in generale (per G non-etale) conterra' la data topologia di G_1, ma non coincidera' con essa.

(dim. inclusione: si consideri un aperto U di G_1. Poiche' le immagini di s\in S ricoprono G_1, allora ogni punto x di U appartiene all'immagine di una local bisectio s_x in S. Poiche' s_x e' allora V=s_x^{-1}(U) e' una aperto di G_0   e la restrizione di s_x a V e' una local bisection di S con immagine contenente x e tutta contenuta in U. U risulta l'unione di tutte le immagini di local bisection cos' costuite, quindi sta in Q(G,S))

 Tuttavia dato il quantale Q(G,S), se da una parte e' immediato ricostruire S come I(Q), non e' immediato ricostuire da questo quantale la topologia di G_1 rispetto a cui le local bisctions di S sono tutte continue. Una corrispondenza biunivoca si po' trovare solo fra quantali Q=Q(G,S) e coppie (G_0,S)

2 In effetti si puo' provare che la nostra definizione di local bisections rende queste sempre continue rispetto alla topologia di G_1 cosi' definita: Se R\subset G_0\times G_0 e' la relatione di equivalenza di G_0 indotta da G_1 e \pi: G_1\to R e' la mappa definita da $\pi=(d,r)$, allora gli aperti di G_1 sono $\pi^{-1}(A)$, con A aperto di R, rispetto alla topologia indotta da G_0\times G_0 (se vuoi lo puoi provare per esercizio). questa topologia non e' in generale T_0.

3 Puo' essere interessante considerare il caso in cui G_0 spazio topologico e G_1 spazio con una misura, e prendere local bisections che siano solo misurabili come mappe (parzialmente definite) da G_0 a G_1. Questo in vista di possibili applicazioni a C*^\ast-algebre associate a gruppoidi non-etale.

4 Vogliamo avere la liberta' di restringere le s.b. S a piacimento, anche se esse sono definite continue per gruppoide topologico G_0,G_1. Vedi esempio delle azioni dei gruppi in cui se ne possono considerare due distinte.

Se S non e' la famiglia di tutte le local bi-sections continue, a maggior ragione non sara' possibile ricostruire la topologia di partenza di G_1 in modo canonico dentro il quantale $Q(G,S)$.

5 Il nostro punto di vista sui gruppoidi e' quello categoriale, in cui il gruppoide e' una categoria il cui oggetti sono i punti di G_0 e le mappe gli elementi di G_1. Se G_0 e' uno spazio topologico o un locale, l'unica richiesta che ci sembra naturale alle local bisections e' che queste definiscano morphismi fra aperti U e V di G_0 interni alla categoria dei locales, cioe' mappe continue. Per ottenere cio' la nostra definizione di local bisection e' sufficiente. In tal modo si puo' considerare una selection base S come una categoria, vedi remark..

Of course, not requiring their continuity is much less restrictive than requiring them to be continuous w.r.t.\ the discrete topology on $G_1$, as this would force the topology on $G_0$ to be discrete too, which would make these groupoids into \'etale groupoids of a very special kind. In fact, the motivation for choosing our setting the way we did is precisely to allow for greater generality, and this laxer definition of local bisections is consistent with this motivation.


}
Finally, the groupoids as we understand them in this paper  can always be made into \'etale topological groupoids, by endowing $G_1$ with the topology generated by taking the intersections of bisection images as a subbase. However, their associated inverse quantal frames turn out to be in general much larger than the quantales we associate with these (non topological) groupoids. The comparison with \cite{Re07}, which we will discuss more in detail in Section 7, is based on a special case of this observation. 

The statements in the  following proposition are well known for other settings and readily follow from the definition of bisection image:
\begin{prop}
\label{prop: properties of gsets}
For every groupoid $\g$ and every bisection image $S$ of $\g$,
\begin{enumerate}
\item $\s(\g)\subseteq \ii(\p(G_1))$.
\item $(\s(\g), \cdot, ()^\ast, E)$ is an inverse monoid.
\end{enumerate}
\end{prop}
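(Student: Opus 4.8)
The plan is to prove part 1 by a direct set-theoretic computation and part 2 by exhibiting $\s(\g)$ as an inverse submonoid of the ambient inverse monoid $\ii(\p(G_1))$, whose existence is guaranteed by Fact \ref{fact:discrete group quantale} together with Proposition \ref{prop:qe}.3.

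For part 1, the key observation I would isolate first is that both structure maps $d$ and $r$ are injective when restricted to a bisection image $S=s[U]$. Injectivity of $d$ on $S$ is immediate from $d\circ s=\mathrm{id}_U$, and injectivity of $r$ on $S$ follows because $r\circ s=t$ is a partial homeomorphism, hence injective. Given this, I would compute $S^\ast\cdot S$ directly: its elements are products $x^{-1}y$ with $x,y\in S$ and $r(x^{-1})=d(y)$, i.e.\ $d(x)=d(y)$ by G5; injectivity of $d$ forces $x=y$, so by G5 every such product equals $x^{-1}x=u(r(x))\in E$. Hence $S^\ast\cdot S\subseteq E$, i.e.\ $S$ is functional. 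The symmetric computation for $S\cdot S^\ast$ uses injectivity of $r$ instead and yields $S\cdot S^\ast=\{u(d(x)):x\in S\}\subseteq E$, so $S^\ast$ is functional as well and $S\in\ii(\p(G_1))$.

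For part 2, since $\ii(\p(G_1))$ is already an inverse monoid (by the two results cited above) in which the inverse of $f$ is $f^\ast$, it suffices to show that $\s(\g)$ contains the unit $E$ and is closed under both $\cdot$ and $()^\ast$; uniqueness of inverses and commutativity of idempotents are then inherited. That $E\in\s(\g)$ is witnessed by the local bisection $u:G_0\to G_1$, for which $d\circ u=\mathrm{id}_{G_0}$ and $r\circ u=\mathrm{id}_{G_0}$ is a homeomorphism. For closure under involution, given $S=s[U]$ with $t=r\circ s:U\to V$, I would define $s':V\to G_1$ by $s'(q)=s(t^{-1}(q))^{-1}$ and verify, using G5, that $d\circ s'=\mathrm{id}_V$ and $r\circ s'=t^{-1}$ (a partial homeomorphism), whose image is exactly $S^\ast$. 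For closure under composition, given $S=s[U]$ with $\sigma=r\circ s$ and $T=\tau[U']$ with $\theta=r\circ\tau$, I would set $W=\sigma^{-1}[U']$ and define $\rho:W\to G_1$ by $\rho(p)=s(p)\cdot\tau(\sigma(p))$, checking via G3 that $d\circ\rho=\mathrm{id}_W$ and $r\circ\rho=\theta\circ\sigma|_W$, and that the image of $\rho$ is precisely $S\cdot T$.

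The main obstacle — really the only place requiring care beyond routine use of the groupoid axioms — is the composition step of part 2: one must check that the domain $W=\sigma^{-1}[U']$ is open (which uses continuity of $\sigma$ and openness of $U'$) and that $r\circ\rho=\theta\circ\sigma|_W$ is again a partial homeomorphism onto an open subset of $G_0$, i.e.\ that the class of partial homeomorphisms between open subsets of $G_0$ is closed under the relevant restriction and composition. Everything else reduces to the injectivity of $d,r$ on bisection images and a mechanical application of G3 and G5.
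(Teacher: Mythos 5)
Your proof is correct, and since the paper gives no proof at all for this proposition --- it is stated with only the remark that the claims ``are well known for other settings and readily follow from the definition of bisection image'' --- your argument is precisely the standard verification being left implicit: injectivity of $d$ and $r$ on a bisection image yields $S^\ast\cdot S\subseteq E$ and $S\cdot S^\ast\subseteq E$ for part 1, and part 2 follows by exhibiting $\s(\g)$ as a subset of the inverse monoid $\ii(\p(G_1))$ (Fact \ref{fact:discrete group quantale} plus Proposition \ref{prop:qe}.3) containing $E$ and closed under $\cdot$ and $()^\ast$, with the explicit local bisections $s'(q)=s(t^{-1}(q))^{-1}$ and $\rho(p)=s(p)\cdot\tau(\sigma(p))$ on the open set $W=\sigma^{-1}[U']$ witnessing the closures. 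All steps check out, including the one you flag as delicate (that restrictions and composites of partial homeomorphisms between open subsets of $G_0$ are again such), so nothing further is needed.
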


\begin{exa}
\label{ex: local bisections}
{\em 1.} Let $(X,X\times G)$ be as in Example \ref{ex: groupoids}.4
s.t.\ moreover $X$ is  a locally connected topological space and $G$ a group with the discrete topology. Then the local bisections are the locally constant maps $U\to G$ s.t.\ $U\subseteq X$ is an open set.\\ 
{\em 2.}
On the other hand, let $R\subseteq X\times X$ be the equivalence relation induced by the action of $G$ as in Example \ref{ex: groupoids}.5. If  $R$ is endowed with the quotient topology induced by the map $(d,r):G\times X\to R$, defined by $(g,x)\mapsto (x,gx)$, then the first projection $\pi_1:R\to X$ is not necessarily \'etale.
For example, let $X=\C$ and $G=\{z\in\C\ |\ z^n=1\}$ be the group of the $n$th roots of the unity, for $n\geq 2$. Consider the action of $G$ on $X$ given by the product $(z,x)\mapsto zx$. Its induced equivalence relation is $R=\{(x,y)\ |\ y=zx,\ z\in G\}$. The relation $R$ can be seen as a groupoid as in Example \ref{ex: groupoids}.1.  For every $z, w\in G$ s.t.\ $z\not=w$ consider the  local bisections of  $R$ defined respectively by $x\mapsto (x,zx)$ and $x\mapsto (x,wx)$. Their images intersect only at $(0,0)\in R$, so
$d:R\to X$ is not \'etale. 
\end{exa}

\begin{definition}\label{def:SPG} A groupoid $\g=(G_0,G_1)$ as above has the {\em selection property}, or is an {\em SP-groupoid}, 
  if
$G_1$ is covered by bisection images. 
\end{definition}

\commment{
It is well known that:
\begin{fact}
\label{fact: subquantale generated by monoid } If $\Q = (Q, \bigvee,
\cdot, ^{\ast}, e)$ is a unital involutive quantale and $S\subseteq
Q$ is such that $(S, \cdot, ()^\ast, e)$ is an inverse monoid,
then the sub-semilattice of $(Q, \bigvee)$ generated by $S$ is a
unital involutive subquantale of $\Q$.
\end{fact}
The fact above suggests that,}

\noindent Given a groupoid $\g$, we can  associate a unital involutive quantale with every inverse monoid
$\s\subseteq \p(G_1)$: namely,  the quantale defined as the sub join-semilattice of
$\langle\p(G_1), \bigcup\rangle$ generated by $\s$.  However, in our non \'etale setting, we may not be able to reconstruct back the inverse semigroup from the quantale. For this, we need  the following new, stronger definition:

\begin{definition}\label{def:selfam} A \emph{selection base} for an SP-groupoid $\g$ is a family $\mathcal{S}\subseteq \s(\g)$ verifying the following conditions:\\
SB1. $\mathcal{S}$ is a sub inverse monoid of $\s(\g)$;\\
SB2. $u[U]\in \mathcal{S}$ for every open set $U\in \Omega(G_0)$;\\
SB3. if $\{S_i\}_{i\in I}\subseteq \s$ and $S_i\cdot S_j^\ast\subseteq E$ and $S_i^\ast\cdot S_j\subseteq E$ for every $i, j\in I$, then $\quad \quad \bigcup_{i\in I}S_i\in \s$.\\
SB4. For every $S, T\in \s$, $\{p\in G_0\mid s(p) = t(p)\}$ is the union of locally closed\footnote{ For every topological space $X$, a subset $Y\subseteq X$ is  {\em locally closed } if $Y = U\cap C$ for some open set $U$ and some closed set $C$.} subsets of $G_0$.\\
SB5. $\mathcal{S}$ covers $G_1$.
\end{definition}
\noindent Selection bases are not in general topological bases, cf.\ Subsection 8.2 for an example.
\begin{exa}
\label{ex:selection bases}
\noindent
1. A continuous group action $G\times X\to X$ as in Example \ref{ex: groupoids}.4 gives rise to a canonical selection base consisting of the bisection images corresponding to local bisections $s_g: U\to G_1$ defined by the assignment  $x\mapsto g\cdot x$ for any $g\in G$.

\noindent
2. If $G_0$ is a $T_1$ space, then the family $\s(\g)$ of the local bisections  is the greatest selection base.
Notice that in this case the condition SB5 is trivially verified, since any subset of $G_0$ is the union of its singleton subsets, which are all closed.

\noindent
3. Let $X$ be  a $T_1$-space  with a continuous group action as above, and let $R$ be the equivalence relation induced by the group action, as in Example \ref{ex: local bisections}.2. Then the groupoid $(X, R)$ has the following, in general distinct, selection bases: the one consisting of the local bisections of the form $x\mapsto(x,gx)$ for those $g\in G$ s.t.\ the assignment $x\mapsto gx$ defines a locally constant\footnote{A map $f: X\to Y$ is {\em locally constant} if for every $x\in X$ there exists some open neighborhood $U$ of $x$ s.t.\ $f(x) = f(y)$ for every $y\in U$.} map, and $\s(R)$, which is given by local bisections of the form $x\mapsto(x,g(x)x)$, such that $g(x)$ is not necessarily locally constant, but the assignment $x\mapsto g(x)x$ defines a partial homeomorphism.
\end{exa}


\begin{remark}
\label{rem:towards pointfree gener}
{\em The pairs $(\g, \s)$ can be regarded as categories on the topology of $G_0$, in the following way. Let $\s_\g$ be the category having the elements of $\Omega(G_0)$ as objects, and such that for every $U, V\in \Omega(G_0)$,  Hom$_{\s_\g}(U,V)$ is the set of those  $s\in\s$ (identified with their associated local bisections) such that $r[s[U]]\subseteq V$. This category includes the frame $\Omega(G_0)$ as a sub-category, and axiom SB3 says that the functor Hom$_{\s_\g}(-,U)$ is a sheaf on $\Omega(G_0)$.\\ This observation paves the way to a generalization of the present results to a setting of quantales associated with sheaves on locales, which will be developed in \cite{PaRe10a}.  Axiom SB4, which is needed in the present setting (see proof of Proposition \ref{prop:Q(G, S) is spatial}), will be always true in the localic setting. Indeed the subspace where two elements of $\s$ ``intersect" each other can still be defined, but any subspace of a locale is a join of locally closed subspaces, (cf.\ \cite{MM92}, chapter IX pp. 504, 505 for a discussion on the canonical subspace associated with a given local operator).} \end{remark}

\subsection{Groupoid quantales}

\begin{definition}\label{def:TGQ} For every SP-groupoid $\g$ and every selection base $\mathcal{S}$ for $\g$, the  \emph{groupoid quantale} ({\em GQ} for short) $\Q(\g,\mathcal{S})$ associated with the pair $(\g, \mathcal{S})$ is the sub $\bigcup$-semilattice of $\p(G_1)$ generated by  $\mathcal{S}$.\end{definition}


\noindent In particular, the elements of $\Q(\g,\mathcal{S})$ are arbitrary joins of elements of $\mathcal{S}$.

\noindent Condition SB3 crucially guarantees that  $\s$ can be traced back from $\Q(\g, \s)$:

 \begin{prop}
 \label{prop:S=I(Q(G, S))}
 For every SP-groupoid $\g$ and every selection base $\mathcal{S}$ for $\g$,  \\
1. $\s = \ii(\Q(\g,\mathcal{S}))$.\\
2. $\Q(\g, \s)_e = \{u[U]\mid U\in \Omega(G_0)\}$.\\
3.  $P\in \Q(\g, \s)_e$ is prime iff $P = u[G_0\setminus \overline{p}]$ for some $p\in G_0$.
 \end{prop}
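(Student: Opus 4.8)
The plan is to prove the three items by translating the quantale-theoretic structure of $\Q(\g, \s)$ back into the set-theoretic and topological structure of $\g$, using the definitions of selection base and bisection image together with the fact (from Proposition \ref{prop:S=I(Q(G, S))}.1) that $\s = \ii(\Q(\g, \s))$.

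For item 1, I would argue that the partial units below $e = E$ are exactly the bisection images contained in $E = u[G_0]$. By Proposition \ref{prop:S=I(Q(G,S))}.1 we have $\ii(\Q(\g, \s)) = \s$, and $\Q(\g, \s)_e = \{f \in \ii(\Q(\g, \s)) : f \leq E\}$. So I must identify which elements of $\s$ are contained in $E$. If $S \in \s$ satisfies $S \subseteq E = u[G_0]$, then its local bisection $s : U \to G_1$ lands inside the image of $u$, forcing $s = u|_U$ and hence $S = u[U]$; conversely every $u[U]$ with $U \in \Omega(G_0)$ is a bisection image contained in $E$ and lies in $\s$ by axiom SB2. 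This gives $\Q(\g,\s)_e = \{u[U] : U \in \Omega(G_0)\}$, which is item 2. (One also checks, using Proposition \ref{prop:qe}.1, that this set is closed under the relevant joins and meets, matching $u[\bigcup U_i]$ and $u[U \cap V]$ respectively, so the identification is an isomorphism of frames with $\Omega(G_0)$.)

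For item 3, the map $U \mapsto u[U]$ is an order-isomorphism between $\Omega(G_0)$ and $\Q(\g,\s)_e$ (since $u$ is injective on $G_0$ and $d \circ u = \mathrm{id}$, so $u[U] \subseteq u[V]$ iff $U \subseteq V$). Under this isomorphism the composition in $\Q(\g,\s)_e$ is meet (Proposition \ref{prop:qe}.1) and corresponds to intersection of opens, so $P = u[U]$ is prime in the quantale sense (i.e. $u[V] \cap u[W] \leq P \Rightarrow u[V] \leq P$ or $u[W] \leq P$, equivalently $u[V \cap W] \leq u[U]$) exactly when $U$ is a prime element of the frame $\Omega(G_0)$, i.e. a meet-prime open set. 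The key point is then that in the frame of opens of a \emph{sober} space, the meet-prime open sets are precisely the complements of closures of points: $U$ is prime iff $U = G_0 \setminus \overline{p}$ for a unique $p \in G_0$. I would invoke soberness directly here — prime opens correspond to completely prime filters, equivalently to points, and the closed set $G_0 \setminus U$ is then the irreducible closed set $\overline{p}$. Transporting back along $U \mapsto u[U]$ yields $P = u[G_0 \setminus \overline{p}]$, as claimed.

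The main obstacle is item 3, and specifically making precise which notion of ``prime'' is intended and confirming it matches frame-theoretic meet-primeness so that soberness can be applied cleanly. I expect the subtlety to be in verifying that primeness computed with the quantale operation $\cdot$ on $\Q(\g,\s)_e$ genuinely coincides with meet-primeness in $\Omega(G_0)$ — this rests on Proposition \ref{prop:qe}.1 (that on $\Q_e$ one has $a \cdot b = a \wedge b$) and on the order-isomorphism with $\Omega(G_0)$, after which the characterization of primes in a sober space's frame of opens is standard. Items 1 and 2 are essentially bookkeeping once $\s = \ii(\Q(\g,\s))$ and SB2 are in hand; the genuine content is the sober-space translation in item 3.
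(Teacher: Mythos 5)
There is a genuine gap: your proposal never proves item 1, yet everything else rests on it. In your plan you announce you will use ``the fact (from Proposition \ref{prop:S=I(Q(G, S))}.1) that $\s = \ii(\Q(\g,\s))$'' --- but that is precisely item 1 of the proposition you are asked to prove, so the argument is circular. (What you then label ``item 1'' is in substance a proof of item 2.) The inclusion $\s \subseteq \ii(\Q(\g,\s))$ is easy, since bisection images are partial units of $\p(G_1)$ (Proposition \ref{prop: properties of gsets}). The nontrivial direction is $\ii(\Q(\g,\s)) \subseteq \s$: an arbitrary partial unit $T$ of the generated quantale is by construction only a union $T = \bigcup_{i\in I} S_i$ with $S_i \in \s$, and in the non-\'etale setting there is no general reason such a union is again a single bisection image lying in $\s$. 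This is exactly where axiom SB3 enters --- the paper flags it explicitly as the condition that ``crucially guarantees that $\s$ can be traced back from $\Q(\g,\s)$.'' The paper's proof: since $T \in \ii(\Q(\g,\s))$, for all $i,j$ one has $S_i S_j^\ast \subseteq TT^\ast \subseteq E$ and $S_i^\ast S_j \subseteq T^\ast T \subseteq E$, so SB3 applies and yields $T = \bigcup_{i\in I} S_i \in \s$. Your proposal never invokes SB3 at all, which is a reliable sign that the essential idea is missing.

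Your treatments of items 2 and 3 are otherwise sound. For item 2 you route through item 1 (an element of $\Q(\g,\s)_e$ is a partial unit below $E$, hence a single bisection image contained in $u[G_0]$, hence of the form $u[U]$, with SB2 for the converse); the paper instead argues directly, decomposing $H = \bigcup_i S_i \subseteq u[G_0]$ and observing that each $s_i$ must agree with $u$ on its domain, so $H = u[\bigcup_i U_i]$ --- this direct route has the advantage of not depending on item 1. For item 3, your identification of quantale-primeness in $\Q(\g,\s)_e$ with meet-primeness in $\Omega(G_0)$ via Proposition \ref{prop:qe}.1, followed by the standard sober-space characterization of prime opens as complements of point closures, matches the paper's (very terse) proof. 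Once you repair item 1 with the SB3 argument, the rest of your plan goes through.
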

 \begin{proof}
1.  $\s\subseteq \ii(\Q(\s, \g))$ immediately follows from $\s\subseteq \s(\g)\subseteq \ii(\p(G_1))$ (cf.\ Proposition \ref{prop: properties of gsets}.1). Let $T\in \ii(\Q(\s, \g))$, so $T = \bigcup\{S_i\}_{i\in I}$ for some collection $\{S_i\}_{i\in I}\subseteq \s$. Then for every $i, j\in I$,  $S_i\cdot S_j^\ast\subseteq T\cdot T^\ast\subseteq E$ and $S_i^\ast\cdot S_j\subseteq T^\ast\cdot T\subseteq E$, hence by SB3, $T = \bigcup_{i\in I}S_i\in \s$.\\
2. By SB2, if $U\in \Omega(G_0)$, then $u[U]\in \s$ and clearly $u[U]\subseteq u[G_0]$ so $u[U]\in \Q(\g, \s)_e$. Conversely, let $H\in \Q(\g, \s)_e$; then $H\subseteq u[G_0]$ and $H = \bigcup_{i\in I} S_i$ for some $\{S_i\mid i\in I\}\subseteq \s$. Let $s_i: U_i\to G_1$ be the corresponding local bisections. Then $S_i\subseteq u[G_0]$ implies that $s_i(p) = u(p)$ for every $p\in U_i$, therefore $H = u[U]$ for $U = \bigcup \{U_i\mid i\in I\}.$\\
3. The prime elements of $\Omega(G_0)$ are exactly the complements of irreducible closed sets, and by assumption $G_0$ is sober.
 \end{proof}
 
 \begin{exa}
{\em 1.} Let $\g = (X,X\times G)$ be as in Example \ref{ex: local bisections}.1,
and let $\s$ be the selection base associated with all local bisections (the locally constant maps $U\to G$ s.t.\ $U\subseteq X$ is an open set). $\Q(\g, \s)$  coincides with the product topology on $G_1=G\times X$ and the resulting topological groupoid is \'etale.\\
{\em 2.}
On the other hand, let $R\subseteq X\times X$ be as in Example \ref{ex: local bisections}.2, and let $\s$ be the selection base associated with all local bisections. The observations in \ref{ex: local bisections}.2  imply, by the results in \cite{Re07} and the discussion in Section 7, that the groupoid quantale $\Q(R, \s)$ is not an inverse quantal frame.
\end{exa}

 \section{SGF quantales and their set groupoids}


\begin{definition}\label{def:SGF}
An \emph{SGF quantale} is a unital involutive quantale $\Q$ satisfying the following extra axioms:

\noindent
{\em SGF1.} $\Q$ is $\bigvee$-generated by $\ii(\Q)$.

\noindent {\em SGF2.} $f = ff^\ast f$ for every $f\in \ii(\Q)$.

\noindent {\em SGF3.} For any $f,g\in\ii(\Q)$ and $h\in\Q_e$ if
$f\leq h\cdot 1\vee g$ then $f\leq h\cdot f\vee g$.
\end{definition}
\vskip2mm \noindent

\noindent Clearly, the first two axioms imply that every SGF-quantale is SG. Let us motivate the axioms by showing that every groupoid quantale is  SGF:
\begin{prop}
\label{prop:Q(G, S) is SGF}
For every SP-groupoid $\g$ and every selection base $\s$ of $\g$, $\Q(\g, \s)$ is an SGF-quantale.
\end{prop}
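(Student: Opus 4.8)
The plan is to show that $\Q(\g,\s)$, viewed as a subquantale of the strongly Gelfand quantale $\p(G_1)$ (Fact \ref{fact:discrete group quantale}), inherits enough structure to verify the three axioms SGF1--SGF3 directly, with the third being the only one requiring real work.

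First I would record that $\Q(\g,\s)$ is genuinely a unital involutive quantale. Since $\s$ is a sub inverse monoid of $\s(\g)$ (SB1), it is closed under the composition $\cdot$ and the involution $()^\ast$ of $\p(G_1)$ and contains $E = u[G_0]$ (by SB2, as $G_0\in\Omega(G_0)$). Because composition distributes over arbitrary unions (D1, D2) and involution commutes with unions (I3) in $\p(G_1)$, the sub $\bigcup$-semilattice generated by $\s$ is closed under $\cdot$ and $()^\ast$; hence it is a unital involutive subquantale with unit $E$. Its top element is $\bigcup\s = G_1$, using that $\s$ covers $G_1$ (SB5).

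SGF1 and SGF2 are then immediate. By Proposition \ref{prop:S=I(Q(G, S))}.1 we have $\ii(\Q(\g,\s)) = \s$, and every element of $\Q(\g,\s)$ is by definition a union of members of $\s$; this is exactly the statement that $\Q(\g,\s)$ is $\bigvee$-generated by its partial units, i.e.\ SGF1. For SGF2, each $f\in\ii(\Q(\g,\s)) = \s\subseteq\ii(\p(G_1))\subseteq\f(\p(G_1))$ is functional in the SG-quantale $\p(G_1)$, and in any SG-quantale $f = ff^\ast f$ for functional $f$; since the operations are inherited, $f = ff^\ast f$ holds in $\Q(\g,\s)$ as well.

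The crux is SGF3, which I would prove by unwinding the two products into set-theoretic terms. Fix $f,g\in\ii(\Q(\g,\s)) = \s$ and $h\in\Q(\g,\s)_e$; by Proposition \ref{prop:S=I(Q(G, S))}.2, $h = u[U]$ for some $U\in\Omega(G_0)$, while the top is $1 = G_1$. The key identities, both following from G4 together with $d\circ s = \mathrm{id}$ for the local bisection $s\colon W\to G_1$ of $f = s[W]$, are
\[
h\cdot 1 = u[U]\cdot G_1 = d^{-1}[U], \qquad h\cdot f = u[U]\cdot s[W] = s[U\cap W] = f\cap d^{-1}[U].
\]
Granting these, the hypothesis $f\leq h\cdot 1\vee g$ reads $f\subseteq d^{-1}[U]\cup g$, and the desired conclusion $f\leq h\cdot f\vee g$ reads $f\subseteq (f\cap d^{-1}[U])\cup g$. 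The verification is then a short pointwise chase: for $x = s(p)\in f$, the hypothesis places $x$ in $g$ (in which case we are done) or in $d^{-1}[U]$; in the latter case $p = d(s(p)) = d(x)\in U$, so $p\in U\cap W$ and $x = s(p)\in s[U\cap W] = f\cap d^{-1}[U]$. The main obstacle is precisely establishing the two product identities correctly --- keeping track of the compatibility condition $r(x) = d(y)$ in the definition of $\cdot$ and using the groupoid unit law to collapse $u(p)\cdot y$ to $y$ --- after which SGF3 follows with no further difficulty.
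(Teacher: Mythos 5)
Your proposal is correct and follows essentially the same route as the paper: SGF1 via $\s = \ii(\Q(\g,\s))$, SGF2 inherited from the strongly Gelfand structure of $\p(G_1)$, and SGF3 by identifying $F,G$ with bisection images and $H$ with $u[U]$, then a pointwise case split on $f(x)\in H\cdot 1$ versus $f(x)\in G$. Your explicit identities $H\cdot 1 = d^{-1}[U]$ and $H\cdot F = s[U\cap W] = F\cap d^{-1}[U]$ merely spell out what the paper's terser argument uses implicitly, so the two proofs coincide in substance.
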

\begin{proof}
SGF1 readily follows from  $\s\subseteq\ii(\Q(\s, \g))$. SGF2 follows from Fact \ref{fact:discrete group quantale}.
For  SGF3, let $F, G\in \ii(\Q(\g, \s))$ and $H\in \Q(\g, \s)_e$. Proposition \ref{prop:S=I(Q(G, S))} implies that $F, G$ are bisection images: let them correspond respectively to the local bisections $f$ and $g$. From the same proposition it follows that $H$ can be identified, via the structure map $u$, with some open subset $h\in \Omega(G_0)$. Assume that $F\subseteq H\cdot 1 \cup G$. This implies that for every $x\in dom(f)$, either $x\in h$, hence $f(x)\in H\cdot F$, or $x\in dom(g)$, which implies, since $d\circ f =id = d\circ g$, that $f(x) = g(x)$. 
\end{proof}
\noindent The remainder of this section is aimed at constructing the set groupoid associated with any SGF-quantale.  This construction is based on the incidence relation, whose definition (Definition \ref{def:inters}) and properties are given in the following subsection.

\subsection{The incidence relation on SGF-quantales}
Let $\Q$ be an SGF-quantale. For every $f\in\ii(\Q)$ let
$d(f)=ff^\ast$ and $r(f)=f^\ast f.$ The following lemma lists some straightforward but useful formal properties of these abbreviations:
\begin{lemma}
Let $\Q$ be an SGF-quantale,  $f, f', g\in\ii(\Q)$ and $h, k\in \Q_e$. Then: \\
1.  $d(hf) = hd(f)$ and $r(fk) = r(f)k$.\\
2. If $f\leq g$, then $d(f)\leq d(g)$ and $r(f)\leq r(g)$.\\
3. $d(ff') = d(f')^{f^\ast}$ and $r(ff') = r(f)^{f'}$.\\
4. $r(f) = d(f)^f$ and $d(f) = r(f)^{f^\ast}$.
\end{lemma}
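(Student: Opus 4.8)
The plan is to prove all four items by direct computation from the definitions $d(f)=ff^\ast$ and $r(f)=f^\ast f$, the action $h^f=f^\ast h f$, and the structural facts already in hand: every partial unit satisfies $f=ff^\ast f$ (axiom SGF2), and above all $\Q_e$ is a frame in which involution is trivial and composition coincides with $\wedge$, so elements of $\Q_e$ multiply commutatively and idempotently (Proposition \ref{prop:qe}.1). The recurring move throughout is to collect factors into $\Q_e$ and then absorb or reorder them using this frame structure.

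For item 1 I would expand $d(hf)=(hf)(hf)^\ast$. Since $h\in\Q_e$ has $h^\ast=h$, this rewrites as $h(ff^\ast)h = h\,d(f)\,h$; as both $h$ and $d(f)$ lie in $\Q_e$, idempotence and commutativity of $\wedge$ collapse this to $h\,d(f)$. The identity $r(fk)=r(f)\,k$ is entirely symmetric, via $(fk)^\ast=kf^\ast$. Item 2 is even quicker: involution preserves the order (from I1--I3, since $f\leq g$ gives $f^\ast\vee g^\ast=g^\ast$) and composition is monotone in each coordinate (from D1, D2), so $f\leq g$ forces $ff^\ast\leq gg^\ast$ and $f^\ast f\leq g^\ast g$ directly.

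Items 3 and 4 are where the action notation does its work. For item 3 I would expand $d(ff')=(ff')(ff')^\ast = f(f'f'^\ast)f^\ast$ and observe that, since $f^{\ast\ast}=f$, the right-hand side is exactly $d(f')^{f^\ast}=(f^\ast)^\ast d(f')f^\ast$ by definition of the action; the identity for $r(ff')$ is symmetric. Item 4 then falls out as the degenerate case: computing $d(f)^f=f^\ast(ff^\ast)f=(f^\ast f)(f^\ast f)$ and using that $r(f)=f^\ast f$ is idempotent in $\Q_e$ gives $d(f)^f=r(f)$, and dually $r(f)^{f^\ast}=d(f)$.

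None of these steps is a genuine obstacle; the only point needing attention is to justify each absorption and reordering, which is precisely the content of Proposition \ref{prop:qe}.1. If anything, the ``hardest'' item is the bookkeeping in item 3, where one must apply the action with $f^\ast$ rather than $f$ so as to match $d(f')^{f^\ast}=f\,d(f')\,f^\ast$ rather than $f^\ast\,d(f')\,f$.
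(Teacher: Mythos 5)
Your proof is correct and is precisely the routine verification the paper has in mind: the lemma is stated without proof as a list of ``straightforward but useful formal properties,'' and your direct computations from $d(f)=ff^\ast$, $r(f)=f^\ast f$, $h^f=f^\ast hf$, together with the frame structure of $\Q_e$ (Proposition \ref{prop:qe}.1) and monotonicity of $\ast$ and $\cdot$, are exactly the intended argument. The one implicit point you use, that $d(f), r(f)\in\Q_e$ for $f\in\ii(\Q)$, holds by definition of partial unit, so nothing is missing.
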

\noindent Let $\p_e$ be the set of the {\em prime} elements of $\Q_e$ (cf.\ \cite{Jo82}) i.e.\ those non-top elements $p\in \Q_e$ s.t.\ for every $h, k\in \Q_e$, if $h\wedge k\leq p$, then $h\leq p$ or $k\leq p$.  
Let $$\ii=\{(p,f)\in \p_e\times \ii(\Q)\ |\ p\in\p_e,\ d(f)\not\leq p\}.$$
\noindent For every $p\in \p_e$, $p\leq e$ and $p\neq e$ imply that $d(e) = e\nleq p$, hence $(p, e)\in \ii$. Moreover, if $f\leq g$ then $d(f)\leq d(g)$ so $(p, f)\in \ii$ implies that $(p, g)\in \ii$.
For every $h\in \Q_e$, $\Q_h=\{k\in\Q\ |\ k\leq h\}$  is a subframe of $\Q_e$.

\begin{lemma}\label{lemma:transform} For every $f\in\ii(\Q)$,
\begin{enumerate} \item the assignment $h\mapsto h^f=f^\ast h f$ defines a frame isomorphism $()^f: \Q_{d(f)}\to\Q_{r(f)}$, the inverse of which is defined by $k\mapsto k^{f^\ast}=fkf^\ast$.
\item The prime elements of $\Q_{d(f)}$ correspond bijectively to the prime elements of $\Q_{r(f)}$ via $()^f$.
\end{enumerate}
\end{lemma}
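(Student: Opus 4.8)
The plan is to establish part 1 by producing $()^{f^\ast}$ as an explicit two-sided inverse of $()^f$ and checking that both maps are order preserving; this makes $()^f$ an order isomorphism between the frames $\Q_{d(f)}$ and $\Q_{r(f)}$ (which are frames since $\Q_h$ is a subframe of $\Q_e$ for every $h\in\Q_e$, and $d(f),r(f)\in\Q_e$). Part 2 will then follow at once, because primeness is a purely order-theoretic notion preserved by any frame isomorphism.

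First I would verify well-definedness. For $h\le d(f)=ff^\ast$, complete distributivity gives $h^f=f^\ast h f\le f^\ast(ff^\ast)f=(f^\ast f)(f^\ast f)=r(f)$, the last equality using that $r(f)\le e$ is idempotent by Proposition \ref{prop:qe}.1; so $()^f$ lands in $\Q_{r(f)}$. Symmetrically, for $k\le r(f)=f^\ast f$ one gets $k^{f^\ast}=fkf^\ast\le d(f)$, so $()^{f^\ast}$ lands in $\Q_{d(f)}$. Both maps are order preserving since composition is order preserving in each coordinate (a consequence of D1 and D2).

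Next come the mutual-inverse identities, which is where the SGF structure genuinely enters. For $h\le d(f)=ff^\ast$, Lemma \ref{lemma:action}.2 gives $h=f h^f f^\ast=(h^f)^{f^\ast}$, so $()^{f^\ast}\circ()^f=\mathrm{id}_{\Q_{d(f)}}$. In the other direction, for $k\le r(f)=f^\ast f$ I compute $(k^{f^\ast})^f=f^\ast(fkf^\ast)f=(f^\ast f)\,k\,(f^\ast f)=r(f)\wedge k\wedge r(f)=k$, using that the product is the meet on $\Q_e$ (Proposition \ref{prop:qe}.1) and that $k\le r(f)$. Thus $()^f$ and $()^{f^\ast}$ are mutually inverse order-preserving bijections, i.e.\ an order isomorphism. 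Since an order isomorphism between frames preserves all joins and all finite meets (and hence tops and bottoms), $()^f$ is a frame isomorphism, proving part 1.

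For part 2, note that $p$ being prime in $\Q_{d(f)}$ is expressed entirely through the order and binary meet of $\Q_{d(f)}$ together with the condition $p\neq d(f)$. As $()^f$ preserves and reflects $\le$, preserves $\wedge$, and sends the top $d(f)$ to the top $r(f)$, it carries primes to primes, and its inverse does likewise; hence it restricts to a bijection between the prime elements of $\Q_{d(f)}$ and those of $\Q_{r(f)}$. The only step requiring care is the mutual-inverse computation, which relies on Lemma \ref{lemma:action}.2 (itself resting on SGF2, i.e.\ $f=ff^\ast f$) and on the idempotency and commutativity of $\Q_e$; the remainder is formal.
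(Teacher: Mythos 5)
Your proof is correct, and since the paper states Lemma \ref{lemma:transform} without proof, your argument supplies precisely the routine verification the authors intended, assembling exactly the ingredients they prepared: Lemma \ref{lemma:action}.2 for $(h^f)^{f^\ast}=fh^ff^\ast=h$, Proposition \ref{prop:qe}.1 (idempotency and meet-structure of $\Q_e$) for $(k^{f^\ast})^f=(f^\ast f)k(f^\ast f)=k$, plus the standard facts that an order isomorphism between frames is automatically a frame isomorphism and that primeness is purely order-theoretic. The only cosmetic remark is that the second inverse identity also follows directly from Lemma \ref{lemma:action}.2 applied to $f^\ast$ (since $k\leq r(f)=d(f^\ast)$), but your direct computation via commutativity of $\Q_e$ is equally valid.
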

\begin{definition}\label{def:inters} The \emph{incidence relation}
$\sim$ on $\ii$ is defined by setting
$$(p,f)\sim (q,g) \mbox{  iff } p=q \mbox{ and } h\not\leq p \mbox{ and } hf\leq pf\vee g\mbox{ for some } h\leq
d(f)\wedge d(g).$$
We will also alternatively write  $f\sim_p
g$ (read: $f$ and $g$ are incident in $p$) in place of $(p,f)\sim (q,g)$.\end{definition}
\noindent \emph{Remark.} Let us interpret the incidence relation if $\Q = \Q(\g, \s)$ for some SP-groupoid $\g$ and some selection base $\s$: in this case, by Proposition \ref{prop:S=I(Q(G, S))}, $\Q_e$ can be identified via $u$ with $\Omega(G_0)$, $\p_e$ can be identified with the collection $\{\overline{p}^c\mid p\in G_0\}$ of the complements of the closures $\overline{p}$ of points  $p\in G_0$ and $\ii(\Q) = \s$. For all $F, G\in \ii(\Q)$, let $f, g$ be their associated local bisections: then $F\sim_{\overline{p}^c} G$ iff there exists an open subset $H$ of $G_0$ s.t.\ $H\cap \overline{p}\neq \varnothing$ (i.e., since $p$ is dense in $\overline{p}$,  $p\in H$), s.t.\ $f$ and $g$ are both defined over $H$ and coincide over $H\cap \overline{p}$.
Moreover, if $G_0$ is $T_1$, then  $\p_e $ corresponds to the collection of the complements of points of $G_0$ and $F\sim_{\{p\}^c} G$ iff
$f(p)=g(p)$.\\
Notice
also that the relation $f\sim_p g$ may be defined by saying that
there exist some $f'\leq f$ and $g'\leq g$ s.t.\ $d(f')=d(g')\not\leq p$
and $f'\leq pf'\vee g'$.
\begin{prop}\label{prop:equivalence} 1. The relation $\sim$ is an equivalence
relation.\\
2. If $f\sim_p g$ and $g\leq g' $ then $f\sim_p g'$.
\end{prop}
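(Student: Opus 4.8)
The plan is to prove Proposition~\ref{prop:equivalence} by unwinding the definition of $\sim$ into the equivalent local form noted just before the statement: $f\sim_p g$ iff there exist $f'\le f$ and $g'\le g$ in $\ii(\Q)$ with $d(f')=d(g')\not\le p$ and $f'\le pf'\vee g'$. This reformulation is convenient because it puts $f$ and $g$ on a symmetric footing through the auxiliary restrictions $f',g'$. For part 2, monotonicity is essentially immediate: if $f\sim_p g$ is witnessed by $f'\le f$, $g'\le g$ with $d(f')=d(g')\not\le p$ and $f'\le pf'\vee g'$, and if $g\le g'$ wait--- rather, given $g\le \tilde g$, the same $f'$ and $g'\le g\le\tilde g$ witness $f\sim_p\tilde g$, since $g'\le\tilde g$ and all the defining inequalities are unchanged. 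So part 2 costs nothing beyond observing that enlarging $g$ preserves the witness.

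For part 1, I would check reflexivity, symmetry, and transitivity in turn. Reflexivity $f\sim_p f$ whenever $(p,f)\in\ii$: take $h=d(f)$, which satisfies $h\not\le p$ by definition of $\ii$, and note $hf=d(f)f=ff^\ast f=f=pf\vee f$ trivially since $f\le pf\vee f$. Symmetry is the first genuine point: from $hf\le pf\vee g$ with $h\le d(f)\wedge d(g)$ and $h\not\le p$, I must produce some $k\le d(f)\wedge d(g)$, $k\not\le p$, with $kg\le pg\vee f$. The natural move is to multiply the hypothesis by $g^\ast$ on the right or by $h$ on the left and use the action identities from Lemma~\ref{lemma:action} (namely $hf=fh^f$, $f^\ast h=h^f f^\ast$) together with SGF2 ($f=ff^\ast f$) and the frame structure of $\Q_e$ from Proposition~\ref{prop:qe}.1, to transport the incidence witnessed on the $d(f)$ side over to the $d(g)$ side via the frame isomorphism $()^f$ of Lemma~\ref{lemma:transform}.

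I expect transitivity to be the main obstacle. Given $f\sim_p g$ and $g\sim_p \ell$, witnessed by $h_1\le d(f)\wedge d(g)$ with $h_1f\le pf\vee g$ and $h_2\le d(g)\wedge d(\ell)$ with $h_2 g\le pg\vee \ell$, I must manufacture a single $h\le d(f)\wedge d(\ell)$ with $h\not\le p$ and $hf\le pf\vee \ell$. The plan is to set $h = h_1\wedge (h_2)^{?}$ transported appropriately, i.e.\ to pull $h_2$ back along the relevant frame isomorphism so that it lives in the same subframe as $h_1$, take the meet, and verify $h\not\le p$ using primeness of $p$ (this is exactly where the prime hypothesis $h\wedge k\le p\Rightarrow h\le p$ or $k\le p$ is used, in its contrapositive: two elements both $\not\le p$ have meet $\not\le p$). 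The delicate computation is chaining the two inequalities: substituting $h_1f\le pf\vee g$ into $h_2g\le pg\vee\ell$ requires multiplying, absorbing the $pf$ and $pg$ terms into the final $p\cdot$ term using that $p\in\Q_e$ is idempotent and central-on-$\Q_e$ (Proposition~\ref{prop:qe}.1) and that $p\cdot 1$ absorbs products with $p$, and crucially invoking axiom SGF3 to pass from a bound of the form $f\le h\cdot 1\vee g$ to $f\le h\cdot f\vee g$ so that the composites stay functional rather than leaking into $h\cdot 1$. Managing the bookkeeping of these restrictions, and confirming at each step that the elements produced remain in $\ii(\Q)$ and below the correct domains, is where the real work lies; the key algebraic inputs are SGF2, SGF3, Lemma~\ref{lemma:action}, and the frame isomorphisms of Lemma~\ref{lemma:transform}.
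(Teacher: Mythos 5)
Your overall architecture matches the paper's proof---reflexivity via $h=d(f)$, transitivity via a meet of the two witnesses plus primeness of $p$ and a final application of SGF3---but there is a genuine gap at symmetry, which you leave as a gesture rather than an argument. You propose to ``multiply the hypothesis by $g^\ast$ on the right or by $h$ on the left'' and to transport along $()^f$, but neither move, as stated, produces the required inequality of the form $kg\leq pg\vee f$. The computation that actually works (and is the paper's) is: right-multiply $hf\leq pf\vee g$ by $f^\ast$ and use $h\leq d(f)$ together with the frame structure of $\Q_e$ (Proposition \ref{prop:qe}.1) to collapse $hff^\ast=h\wedge d(f)=h$, yielding $h\leq p\vee gf^\ast$; then apply the involution, using $h^\ast=h$ and $p^\ast=p$ in $\Q_e$, to convert this into $h\leq p\vee fg^\ast$; finally right-multiply by $g$ and use $g^\ast g\leq e$ to get $hg\leq pg\vee fg^\ast g\leq pg\vee f$, so the \emph{same} witness $h$ works for $g\sim_p f$. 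The two decisive steps---the collapse $hff^\ast=h$ and the involution swap of $gf^\ast$ into $fg^\ast$---are exactly what your sketch is missing, and multiplying by $g^\ast$ instead does not obviously lead anywhere.

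On transitivity your plan contains a spurious complication that signals a misreading of where the witnesses live: you want to pull $h_2$ back along ``the relevant frame isomorphism'' so that it sits in the same subframe as $h_1$ (your own ``$(h_2)^{?}$'' betrays that you could not identify which one). No transport is needed: both incidences $f\sim_p g$ and $g\sim_p \ell$ are witnessed at the \emph{same} prime $p$ on the domain side, so $h_1,h_2\in\Q_e$ already, and the paper simply sets $h=h_2h_1$ (their meet), with $h\nleq p$ by primeness, $h\leq d(f)\wedge d(\ell)$, and then chains $hf\leq h_2pf\vee h_2g\leq h_2pf\vee pg\vee\ell\leq p\cdot 1\vee\ell$, whence SGF3 gives $hf\leq p\,hf\vee\ell\leq pf\vee\ell$. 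Your fallback description (meet of witnesses, primeness in contrapositive, absorption into $p\cdot 1$, SGF3 to recover $pf$) is exactly right; transport along $()^f$ is needed only when composing incidences at \emph{different} points, as in Proposition \ref{prop:sim compos e invol}.1, not here. Reflexivity and item 2 are fine, modulo the slip of writing $f=pf\vee f$ where only $f\leq pf\vee f$ is meant, and modulo your reliance on the paper's unproved remark reformulating $\sim_p$ via $f'\leq f$, $g'\leq g$, which your argument does not actually need.
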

\begin{proof} 1. Reflexivity is obvious. Symmetry: $hf\leq pf\vee g$ implies $h  = h\wedge d(f) = hd(f)\leq pd(f)\vee gf^\ast$ hence $h\leq p\vee fg^\ast$ and so $hg\leq pg\vee fg^\ast g\leq pg\vee f$. Transitivity: If $h_1f\leq pf\vee g$ and $h_2g\leq pg\vee l$ then setting $h=h_2h_1$ we get $h\not\leq p$, because $p$ is prime, $h\leq d(f)\wedge d(l)$ and $$hf = h_2h_1f\leq h_2pf\vee h_2g\leq h_2pf\vee pg\vee l\leq p(f\vee g)\vee l\leq p\cdot 1\vee l.$$
Hence, by SGF3, $ hf\leq phf\vee l\leq pf\vee l$.\\
2. Straightforward. \end{proof}
\begin{lemma}
\label{lemma:p and f[p]}
For every SGF-quantale $\Q$, let $(p,f)\in\ii$. Then:\\
1. there exists a unique $q\in \p_e$,
denoted $q=f[p]$, s.t.\ $r(f)\not\leq q$ and $pf=fq$.\\
2. For every $h\in \Q_e$, if $h\not\leq p$, then $h^f\nleq f[p]$.\\
3. For every $h\in \Q_e$, if $h\not\leq p$, then $d(hf)\not\leq p$ and $r(hf)\nleq f[p]$.\\
4. If $f\sim_p g$ then  $f[p]=g[p]$.\\
5. $(f[p], f^\ast)\in \ii$ and $f^\ast[f[p]] = p$.\\
6. If $(f[p], g)\in \ii$, then $(p, fg)\in \ii$ and $fg[p] = g[f[p]]$.\\
7. $ff^\ast\sim_p e$ and $f^\ast f\sim_{f[p]} e$.
\end{lemma}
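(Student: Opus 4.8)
The plan is to make everything rest on a single workable description of $f[p]$ together with one elementary consequence of SGF3. First I would record that device: taking $g=0$ in SGF3 gives, for $f\in\ii(\Q)$ and $h\in\Q_e$, that $f\le h\cdot 1$ implies $f\le hf$, i.e. $f=hf$ (since $hf\le f$ always). I would then prove item~1 as follows. Because $p$ is prime and $d(f)\not\le p$, the element $p\wedge d(f)$ is prime in the frame $\Q_{d(f)}$; transporting it along the frame isomorphism $(\,)^f$ of Lemma~\ref{lemma:transform} yields a prime $q_0:=f^\ast pf$ of $\Q_{r(f)}$. Setting $f[p]:=\bigvee\{k\in\Q_e\mid k\wedge r(f)\le q_0\}$, frame distributivity gives $f[p]\wedge r(f)=q_0$, whence $r(f)\not\le f[p]$ and $f[p]\neq e$; primeness of $q_0$ in $\Q_{r(f)}$ forces $f[p]$ to be prime; and $pf=f\,p^{f}=f q_0=f\,f[p]$ using Lemma~\ref{lemma:action}. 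For uniqueness I would note that any prime $q$ with $r(f)\not\le q$ and $pf=fq$ satisfies $q\wedge r(f)=q_0$ (multiply $fq=fq_0$ by $f^\ast$), and then $q$ is forced to be the \emph{largest} such element: if $k\wedge r(f)\le q_0\le q$ then $k\wedge r(f)\le q$, so by primeness and $r(f)\not\le q$ we get $k\le q$; hence $f[p]\le q$ and $q\le f[p]$. I would extract from this the characterization $k\le f[p]\iff fkf^\ast\le p$, which drives the rest.

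With that characterization, items 2, 3, 7 and 5 are short. For item~2, $fh^{f}f^\ast=h\wedge d(f)$, so $h^{f}\le f[p]\iff h\wedge d(f)\le p$, and primeness of $p$ with $d(f)\not\le p$ gives $h\le p$; contraposing yields the claim. Item~3 follows since $d(hf)=h\wedge d(f)\not\le p$ (primeness) and $r(hf)=h^{f}$, to which item~2 applies. Item~7 is immediate from item~1 since $ff^\ast,f^\ast f\le e$ while $d(f)\not\le p$ and $r(f)\not\le f[p]$. Item~5 comes from involuting $pf=f\,f[p]$ to $f^\ast p=f[p]\,f^\ast$, so $p$ meets the defining conditions of $f^\ast[f[p]]$ and uniqueness gives $f^\ast[f[p]]=p$.

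For item~6 I would compute $d(fg)=d(g)^{f^\ast}$, and deduce $(p,fg)\in\ii$ by applying item~2 to $f^\ast$ (using item~5 to read $f^\ast[f[p]]=p$). I would then check that $g[f[p]]$ satisfies both defining conditions of $(fg)[p]$: the product condition $p(fg)=(pf)g=f\,f[p]\,g=f\,g\,g[f[p]]=(fg)\,g[f[p]]$, and $r(fg)=r(f)^{g}\not\le g[f[p]]$, which holds by item~2 for $g$ together with $r(f)\not\le f[p]$ (item~1). Uniqueness then gives $fg[p]=g[f[p]]$.

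Item~4 is the crux and the step I expect to fight with. Using restriction‑invariance of $f[p]$ (if $f'\le f$ with $d(f')\not\le p$ then, by the natural order $f'=f r(f')$ and item~1‑uniqueness, $f'[p]=f[p]$) together with the reformulation in the Remark, I would reduce to $f',g'$ with $d(f')=d(g')=a\not\le p$ and $f'\le pf'\vee g'$. Put $w=f'^\ast g'$, so $g'=f'w$ and $d(w)=r(f')$; by item~6, $g'[p]=w[f'[p]]=w[q]$ with $q:=f'[p]$, and by item~1 $d(w)=r(f')\not\le q$. From $f'\le pf'\vee g'=f'q\vee f'w$ I get $d(w)\le(q\wedge d(w))\vee w$ after left‑multiplying by $f'^\ast$, and then $w\le qw\vee e$ by involuting and right‑multiplying by $w$ (turning $w^\ast w$ into $r(w)\le e$). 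The heart is the base lemma: \emph{if $F\le PF\vee e$ with $(P,F)\in\ii$, then $P\le F[P]$.} Here the naive expansion $f'kf'^\ast\le(pf'\vee g')k(f'^\ast p\vee g'^\ast)$ fails because of uncontrollable cross terms; what works instead is, for $k\le P$, to bound $FkF^\ast\le PFkF^\ast\vee kF^\ast\le P\cdot 1$ and then invoke the SGF3 device to conclude $FkF^\ast=P\,FkF^\ast\le P$. Applying this base lemma to $(q,w)$ gives $q\le w[q]$; its involution reads $w^\ast\le w^\ast q\vee e=w[q]\,w^\ast\vee e$ (using $w[q]\,w^\ast=w^\ast q$ from item~5), so applying the base lemma to $(w[q],w^\ast)$ gives $w[q]\le w^\ast[w[q]]=q$ (item~5). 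Hence $w[q]=q$ and $f[p]=g'[p]=g[p]$. The obstacle, and the thing to get right, is exactly this interaction of incidence with the image map: the $w\leftrightarrow w^\ast$ symmetry plus the SGF3 identity $f\le h\cdot 1\Rightarrow f=hf$ is what converts the one‑sided incidence hypothesis into the two‑sided equality $w[q]=q$.
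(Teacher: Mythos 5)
Your proof is correct, and for items 1, 2, 3, 5, 6 and 7 it essentially coincides with the paper's: the paper likewise obtains $f[p]$ by transporting the prime $p\wedge d(f)$ of $\Q_{d(f)}$ along the isomorphism $(\,)^f$ of Lemma~\ref{lemma:transform} and invoking the localic fact that primes of $\Q_h$ are exactly the $hp$ with $p\in\p_e$ and $h\nleq p$ (you prove this fact inline, via the explicit join, which is fine), and it derives 2, 3, 5 and 7 by the same manipulations; your characterization $k\leq f[p]\iff fkf^\ast\leq p$ is a compact repackaging of items 1--2 rather than a new ingredient. Two micro-remarks: in item 6 you verify $r(fg)\nleq g[f[p]]$, which the paper's appeal to uniqueness tacitly needs but does not spell out, so you are slightly more careful there; conversely, your restriction-invariance step in item 4 also needs $r(f')\nleq f[p]$ before uniqueness can be applied --- this follows from your item 3 with $f'=d(f')f$, but you should say so.

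Item 4 is where you genuinely diverge, and your route works. The paper argues in one shot: from $hg\leq pg\vee f$ (using symmetry of $\sim$, already established via SGF3 in Proposition~\ref{prop:equivalence}) it derives $hgq\leq p\cdot 1\vee pgq$, applies SGF3 in full strength to get $hgq\leq gq'$, computes $r(hg)q\leq q'$, and concludes $q\leq q'$ from item 3 and primeness of $q'$, with the reverse inequality by the symmetric argument. You instead normalize to equal domains, factor $g'=f'w$ with $w=f'^\ast g'$, reduce via item 6 to the single equality $w[q]=q$, and prove it by two applications of your base lemma ($F\leq PF\vee e$ and $(P,F)\in\ii$ imply $P\leq F[P]$), exploiting the $w\leftrightarrow w^\ast$ involution symmetry instead of symmetry of $\sim$. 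I checked the computations ($d(w)=r(f')$; $w\leq qw\vee e$; $FPF^\ast\leq PFPF^\ast\vee PF^\ast\leq P\cdot 1$; $w^\ast q=w[q]w^\ast$) and they are sound; note also that taking $g=0$ in SGF3 is legitimate since $0\in\ii(\Q)$. The trade-off: the paper's argument is shorter and uses SGF3 directly, while yours is longer but isolates a reusable lemma and shows that the crux of item 4 needs only the degenerate instance $f\leq h\cdot 1\Rightarrow f=hf$ of SGF3 --- a genuinely weaker consequence than what the paper's computation (and its prerequisite, transitivity/symmetry of $\sim$) consumes, which is a point of independent interest.
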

\begin{proof} 1. From the basic theory of locales, we recall that for every $0\not=h\in\Q_e$
$p'$ is a prime element of $\Q_h$ iff
$p' = hp$ for a unique $p\in\p_e$ s.t.\ $h\nleq p$.
By Lemma \ref{lemma:transform}.2, ${p'}^f$ is a prime of $\Q_{r(f)}$, hence ${p'}^f = r(f)q$ for a unique $q\in \p_e$ s.t.\ $r(f)\nleq q$. Therefore, by Lemma \ref{lemma:action}.1,  $pf= fp^f = f{p'}^f = fr(f)q=fq$.\\
2. Let $q = f[p]$. Then $pf = fq$, so $h^f\leq q$ implies, by Lemma \ref{lemma:action}.1, that $hf = fh^f\leq fq = pf$, hence $hd(f)\leq pd(f) \leq p$, and since $p$ is prime and $d(f)\nleq p$, then $h\leq p$.\\
3. Recall that $d(hf) = hd(f)$. Since $p$ is prime and $d(f)\nleq p$, then $h\not\leq p$ implies that $d(hf)=hd(f)\not\leq p$. Let $k=h^f$ (so, by Lemma \ref{lemma:action}.1, $hf=fk$); by item 2 we get that $k\nleq f[p]$, and since $f[p]$ is prime  and $r(f)\nleq f[p]$, then $r(hf)= r(fk) = r(f)k\not\leq f[p]$.\\
4. Assume that $f\sim_p g$. Then there exists some $h\in \Q_e$ s.t.\ $h\nleq p$, $h\leq d(f)\wedge d(g)$ and $hg\leq pg\vee f$. 
Let $q = f[p]$ and $q' = g[p]$, so $pf = fq$ and $pg = gq'$, and let us show that $q =q'$. Our assumption implies that there exists some $h\in \Q_e$ s.t.\  $h\nleq p$ and $hgq\leq pgq\vee fq=pgq\vee pf\leq p\cdot 1\vee pgq$, hence by SGF3, $hgq\leq p\cdot hgq\vee pgq\leq  pg=gq'$. This implies that  $r(hg)q = r(hgq)\leq r(gq') = r(g)q'\leq q'$. Since $q' = g[p]$ and $h\nleq p$, by item 3 we get that $r(hg)\nleq q'$. Hence, since $q'$ is prime, $r(hg)q \leq q'$ implies that $q\leq q'$. The proof that  $q'\leq q$ is obtained symmetrically, from $g\sim_p f$.  \\
5. By item 2, $d(f)\nleq p$ implies that $d(f^\ast) = r(f) = d(f)^f\nleq f[p]$, which proves that  $(f[p], f^\ast)\in \ii$. Let $q = f[p]$.  In order to show that $f^\ast[q] = p$, by the uniqueness of $f^\ast[q]$ it is enough to show that $qf^\ast = f^\ast p$, which readily follows from $pf = fq$.\\
6. Let $q = f[p]$. Then by item 5, $p = f^\ast[q]$, hence $d(g)\nleq q$ implies by item 2 that $d(fg) = d(g)^{f^\ast}\nleq p$, which proves that $(p, fg)\in \ii$. Let $q' = g[q]$, so $qg = gq'$; to finish the proof it is enough to show that $pfg = fgq'$: since $pf = fq$, then $pfg = fqg = fgq'$.\\
7. By assumption $d(f)\nleq p$, so take $h = d(f)$: clearly $h\leq d(ff^\ast)\wedge d(e)$ and  $hff^\ast \leq e = pff^\ast\vee e$. The second relation follows from item 5 and the first relation in this item.
\end{proof}
\begin{prop}
\label{prop:sim compos e invol}
 For every SGF-quantale $\Q$, let $f, f'g, g'\in\ii(\Q)$ and $p\in \p_e$.
\begin{enumerate}\item  If $f\sim_p g$ and $f'\sim_{f[p]} g'$ then $ff'\sim_p gg'$.
\item  $f\sim_p g$ iff $f^\ast\sim_{f[p]} g^\ast$.
\end{enumerate}
\end{prop}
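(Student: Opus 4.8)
The plan is to prove the involution statement (2) first, since the composition statement (1) can reuse it, and both reduce to applying the involution and/or axiom SGF3 to the defining inequality of $\sim$, exactly in the style of the transitivity argument in Proposition \ref{prop:equivalence}. Throughout I will freely use that $\ast$ is the identity on $\Q_e$ (Proposition \ref{prop:qe}.1), that $\Q_e$ is a frame, and the formal identities $fh=(h)^{f^\ast}f$, $hf=f h^f$ (Lemma \ref{lemma:action}.1).

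For (2), I would take a witness of $f\sim_p g$, i.e.\ $h\leq d(f)\wedge d(g)$ with $h\nleq p$ and $hf\leq pf\vee g$, and simply apply $\ast$. Using $h^\ast=h$, $p^\ast=p$ and I2--I3 this gives $f^\ast h\leq f^\ast p\vee g^\ast$. I would then rewrite both sides: by Lemma \ref{lemma:action}.1, $f^\ast h=h^f f^\ast$, and taking the involution of $pf=f\,f[p]$ (Lemma \ref{lemma:p and f[p]}.1) gives $f^\ast p=f[p]\,f^\ast$, so that $h^f f^\ast\leq f[p]\,f^\ast\vee g^\ast$. The natural witness for $f^\ast\sim_{f[p]}g^\ast$ is $h^f$, and the only point needing care is that a witness must lie below $d(f^\ast)\wedge d(g^\ast)=r(f)\wedge r(g)$: here $h^f\leq r(f)$ by Lemma \ref{lemma:transform}.1, but $h^f\leq r(g)$ is not automatic, so I would pass to $h^f\wedge r(g)$. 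This preserves the inequality, and it is still $\nleq f[p]$ because $f[p]$ is prime while $h^f\nleq f[p]$ (Lemma \ref{lemma:p and f[p]}.2) and $r(g)=d(g^\ast)\nleq g[p]=f[p]$ (Lemma \ref{lemma:p and f[p]}.5 and \ref{lemma:p and f[p]}.4). The converse then follows by applying this implication to $f^\ast,g^\ast$ and using $f^{\ast\ast}=f$ and $f^\ast[f[p]]=p$ (Lemma \ref{lemma:p and f[p]}.5).

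For (1), I would first note well-typedness: by Lemma \ref{lemma:p and f[p]}.4, $f[p]=g[p]$, so from $(f[p],f'),(f[p],g')\in\ii$ and Lemma \ref{lemma:p and f[p]}.6 both $(p,ff')$ and $(p,gg')$ lie in $\ii$. The core is to combine the two witnesses. Writing $q=f[p]$, a witness $h_2\leq d(f')\wedge d(g')$ of $f'\sim_q g'$ gives $h_2 f'\leq qf'\vee g'$; multiplying on the left by $f$ and using $fq=pf$ and $f h_2=(h_2)^{f^\ast}f$ yields
\[
(h_2)^{f^\ast}\,ff'\leq p\,ff'\vee fg'.
\]
A witness $h_1\leq d(f)\wedge d(g)$ of $f\sim_p g$ gives $h_1 f\leq pf\vee g$, hence $h_1 fg'\leq p\,fg'\vee gg'$. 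Multiplying the display on the left by $h_1$ and substituting, all $p$-terms absorb into $p\cdot 1$, giving
\[
h_1(h_2)^{f^\ast}\,ff'\leq p\cdot 1\vee gg'.
\]
Since $ff',gg'\in\ii(\Q)$ and $h_1(h_2)^{f^\ast}ff'\in\ii(\Q)$, axiom SGF3 replaces $p\cdot 1$ by $p\cdot(h_1(h_2)^{f^\ast}ff')$, which is $\leq p\,ff'$; thus $H\,ff'\leq p\,ff'\vee gg'$ with $H=h_1(h_2)^{f^\ast}$.

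The remaining bookkeeping — verifying that $H$, or a restriction of it, is a legitimate witness — is what I expect to be the main (though routine) obstacle. That $H\nleq p$ follows from primeness of $p$ once one knows $h_1\nleq p$ (immediate) and $(h_2)^{f^\ast}\nleq p$; the latter is Lemma \ref{lemma:p and f[p]}.2 applied to $f^\ast$ at the point $f[p]$, using $f^\ast[f[p]]=p$. For the domain condition, $H\leq d(ff')$ follows from $(h_2)^{f^\ast}=f h_2 f^\ast\leq f\,d(f')\,f^\ast=d(ff')$ (order-preservation of $h\mapsto fhf^\ast$ and the identity $d(ff')=d(f')^{f^\ast}$ recorded in the formal-properties lemma), and the constraint $H\leq d(gg')$ is secured by passing to $H\wedge d(ff')\wedge d(gg')$, which remains $\nleq p$ since $p$ is prime and $d(ff'),d(gg')\nleq p$. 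This completes $ff'\sim_p gg'$.
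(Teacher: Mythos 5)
Your proposal is correct and follows essentially the paper's own proof: for (2) the paper likewise takes the witness $k=h^f$ and computes $kf^\ast=f^\ast h\leq f^\ast p\vee g^\ast=q f^\ast\vee g^\ast$, reducing the converse to $p=f^\ast[f[p]]$, and for (1) it uses the witness $k=h{h'}^{f^\ast}{h'}^{g^\ast}$ with the same absorption of all $p$-terms into $p\cdot 1\vee gg'$ and the same final appeal to SGF3 --- your passing to $H\wedge d(ff')\wedge d(gg')$ is just a cosmetic variant of the paper's extra factor ${h'}^{g^\ast}$, both justified by primeness of $p$ since $d(ff'),d(gg')\nleq p$. If anything, you are slightly more careful than the paper in (2): its claim that ``likewise $k\leq d(g^\ast)$'' for $k=h^f$ is indeed not automatic (only $h^f\leq d(f)^f=r(f)$ comes for free), and your replacement $h^f\wedge r(g)$, which remains $\nleq f[p]$ because $r(g)=d(g^\ast)\nleq g[p]=f[p]$ and $f[p]$ is prime, is exactly the needed repair.
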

\begin{proof} 1. Let $q = f[p] = g[p]$ and $q' = f'[q] = g'[q]$. Hence $pf = fq$, $pg = gq$, $qf' = f'q'$ and $qg' = g'q'$. By assumptions there exist some $h, h'\in \Q_e$ s.t.\ $h\nleq p$, $h'\nleq q$, $h\leq d(f)\wedge d(g)$, $h'\leq d(f')\wedge d(g')$, $hf\leq pf\vee g$ and $h'f'\leq qf'\vee g'$. By Lemma \ref{lemma:p and f[p]}.5, $p = f^\ast[q] = g^\ast[q]$, hence $h'\nleq q$ implies, by Lemma \ref{lemma:p and f[p]}.2, that ${h'}^{f^\ast}\nleq p$ and ${h'}^{g^\ast}\nleq p$, and so $h{h'}^{f^\ast}{h'}^{g^\ast}\nleq p$. Let $k = h{h'}^{f^\ast}{h'}^{g^\ast}$: to finish the proof it is enough to show that $k\leq d(ff')\wedge d(gg')$ and $kff'\leq pff'\vee gg'$. $h'\leq d(f')$ implies that $k\leq {h'}^{f^\ast}\leq d(f')^{f^\ast} = d(ff')$, and analogously $k\leq d(gg')$, from which the first inequality follows. For the second inequality, $$kff'\leq h{h'}^{f^\ast} ff' = hfh'f'\leq pfqf'\vee pfg'\vee gqf'\vee gg'.$$ Since $gqf' = pgf'$, then $kff'\leq p(fqf'\vee fg'\vee gf')\vee gg'\leq p\cdot 1\vee gg'$. By SGF3, we get that $kff'\leq pkff'\vee gg'\leq pff'\vee gg'$.\\
2. Since $p = f^\ast[f[p]]$ it is enough to show the left-to-right direction. So let $q = f[p]$. By Lemma \ref{lemma:p and f[p]}.5, $qf^\ast = f^\ast p$. By assumption, there exists some $h\in \Q_e$ s.t.\ $h\nleq p$, $h\leq d(f)\wedge d(g)$ and $hf\leq pf\vee g$. Let $k = h^f$: then, by Lemma \ref{lemma:p and f[p]}.2, $k\nleq q$; moreover, $h\leq d(f)$ implies that $h^f\leq d(f)^f = r(f) = d(f^\ast)$ and likewise $k\leq d(g^\ast)$. Finally, $k f^\ast  = f^\ast h\leq f^\ast p\vee g^\ast = q f^\ast \vee g^\ast$.
\end{proof}

 \subsection{The set groupoid of an SGF-quantale}
\begin{definition}\label{def:groupoid}
For every SGF-quantale $\Q$, its associated set groupoid $\g(\Q)$  is defined as follows:
$G_0=\p_e$ and $G_1=\ii/\sim$, moreover, denoting the elements of $G_1$
by  $[p,f]$, the structure maps of $\g(\Q)$ are given by the following assignments: $$d([p,f])=p,\quad r([p,f])=f[p],\quad u(p)=[p,e],$$ $$[p,f][q,g]=[p,fg]\quad \mbox{ only if }\quad q=f[p]$$
 $$[p,f]^{-1}=[f[p],f^\ast].$$
\end{definition}

\begin{lemma}
The structure maps above are indeed well defined.
\end{lemma}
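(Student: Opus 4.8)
The plan is to verify separately that each of the five structure maps is insensitive to the choice of representatives, appealing throughout to the results of Subsection~4.1. Since the genuine work has already been carried out in Lemma~\ref{lemma:p and f[p]} and Proposition~\ref{prop:sim compos e invol}, the proof reduces to assembling these facts in the right order. The only step requiring real care is the product, where well-definedness of the composability condition, membership of the output in $\ii$, and $\sim$-invariance must all be checked simultaneously; this is the step I expect to be the main obstacle, although even it comes down to citing the appropriate earlier result.

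First I would dispose of the three easy maps. The domain map $d([p,f]) = p$ is well defined because, by Definition~\ref{def:inters}, $(p,f)\sim(q,g)$ forces $p = q$, so $d$ merely reads off the common first coordinate of a class. For $u(p) = [p,e]$ there is nothing to check on the source side, since it is defined directly on points of $G_0 = \p_e$; and the value $[p,e]$ does lie in $G_1$ because $(p,e)\in\ii$, as already noted right after the definition of $\ii$ (from $p\neq e$ together with $d(e) = e$). The range map $r([p,f]) = f[p]$ requires two things: that $f[p]$ be a well-defined element of $G_0 = \p_e$, which is the existence-and-uniqueness content of Lemma~\ref{lemma:p and f[p]}.1, and that it depend only on the $\sim$-class of $(p,f)$, which is exactly Lemma~\ref{lemma:p and f[p]}.4.

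Next I would treat the inverse $[p,f]^{-1} = [f[p],f^\ast]$. Membership of the value in $G_1$, i.e.\ $(f[p],f^\ast)\in\ii$, is the first assertion of Lemma~\ref{lemma:p and f[p]}.5. For representative-independence, assume $f\sim_p g$; then $f[p] = g[p]$ by Lemma~\ref{lemma:p and f[p]}.4 and $f^\ast\sim_{f[p]} g^\ast$ by Proposition~\ref{prop:sim compos e invol}.2, so $(f[p],f^\ast)\sim(g[p],g^\ast)$, as required.

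Finally, the multiplication $[p,f][q,g] = [p,fg]$, defined only when $q = f[p]$. The composability condition $q = f[p]$ is itself well defined, being the equation $r([p,f]) = d([q,g])$ whose two sides are well defined by the previous paragraphs. Granting $q = f[p]$ and $(q,g)\in\ii$, membership $(p,fg)\in\ii$ is supplied by Lemma~\ref{lemma:p and f[p]}.6 (which moreover gives $fg[p] = g[f[p]]$, to be used later when checking the groupoid identity $r(xy) = r(y)$). For $\sim$-invariance, suppose $(p,f)\sim(p,f')$ and $(q,g)\sim(q,g')$ with $q = f[p] = f'[p]$; then $f\sim_p f'$ and $g\sim_{f[p]} g'$, so Proposition~\ref{prop:sim compos e invol}.1 yields $fg\sim_p f'g'$, that is $[p,fg] = [p,f'g']$. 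Assembling these observations proves the lemma.
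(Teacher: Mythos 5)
Your proof is correct and takes essentially the same route as the paper's: both reduce well-definedness of $d$ to the definition of $\sim$, of $r$ to Lemma \ref{lemma:p and f[p]}.4, of the product to Lemma \ref{lemma:p and f[p]}.4 (for composability) together with Proposition \ref{prop:sim compos e invol}.1, and of the inverse to Proposition \ref{prop:sim compos e invol}.2. Your write-up is merely a little more explicit than the paper's, in checking $u$ and the membership conditions $(f[p],f^\ast)\in\ii$ and $(p,fg)\in\ii$ via Lemma \ref{lemma:p and f[p]}.5 and .6, which the paper leaves implicit.
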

\begin{proof}
If $(p, f)\sim (p'f')$ then $p = p'$, so $d$ is well defined. Moreover, by Lemma \ref{lemma:p and f[p]}.4, $f[p] = f'[p']$, so $r$ is well defined.
Also by Lemma \ref{lemma:p and f[p]}.4, it is straightforward to see that if $(p,f)\sim (p', f')$ and $(q, g)\sim (q', g')$ then 
$[p, f][q, g]$ is defined iff $q = f[p]$ iff $q' = f[p']$ iff $[p', f'][q', g']$ is defined; Proposition \ref{prop:sim compos e invol}.1 exactly says that the product is well defined. Likewise, Proposition \ref{prop:sim compos e invol}.2 exactly says that the inverse is well defined.
\end{proof}

\begin{prop}\label{prop:groupoid} For every SGF-quantale $\Q$, $\g(\Q)$ is a set groupoid.
\end{prop}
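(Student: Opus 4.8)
The plan is to verify that the tuple $\g(\Q) = (G_0, G_1, m, d, r, u, i)$ defined in Definition \ref{def:groupoid} satisfies the axioms G1--G5 of Definition \ref{def:groupoid discrete}. Well-definedness of all the structure maps has already been established in the preceding lemma, so what remains is purely to check the defining identities. Since $G_0 = \p_e$ and $G_1 = \ii/\!\sim$ are sets, G1 is immediate. The entire proof is really an exercise in unwinding the formulas $d([p,f]) = p$, $r([p,f]) = f[p]$, $u(p) = [p,e]$, $[p,f][q,g] = [p,fg]$ (defined exactly when $q = f[p]$), and $[p,f]^{-1} = [f[p], f^\ast]$, and applying the algebraic identities for $f[p]$ collected in Lemma \ref{lemma:p and f[p]}.

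First I would dispatch the source/target and unit compatibilities (G2). For $p \in G_0 = \p_e$ we have $u(p) = [p,e]$, and since $e[p] = p$ (because $pe = p = ep = ep$, so $e[p]=p$ by the uniqueness in Lemma \ref{lemma:p and f[p]}.1), both $d(u(p)) = p$ and $r(u(p)) = e[p] = p$ hold. Next, for G3 and the associativity of $m$: the product $[p,f][q,g]$ is defined precisely when $q = f[p] = r([p,f])$, matching the definition of $G_1\times_0 G_1$; and by Lemma \ref{lemma:p and f[p]}.6, whenever $q = f[p]$ we have $(p,fg)\in\ii$ and $fg[p] = g[f[p]]$, which gives $d([p,fg]) = p = d([p,f])$ and $r([p,fg]) = fg[p] = g[f[p]] = r([q,g])$, as required. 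Associativity of $m$ then reduces to associativity of the quantale product: $([p,f][q,g])[s,l] = [p,(fg)l]$ and $[p,f]([q,g][s,l]) = [p,f(gl)]$, and $(fg)l = f(gl)$ in $\ii(\Q)$; one needs only to check the two composites are defined under the same condition, which follows from the chained application of Lemma \ref{lemma:p and f[p]}.6.

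For the unit law G4, I compute $[p,f]\, u(r([p,f])) = [p,f][f[p], e] = [p, fe] = [p,f]$ (the product is legitimate since $f[p] = r([p,f])$), and symmetrically $u(d([p,f]))\,[p,f] = [p,e][p,f] = [p, ef] = [p,f]$, using $e[p] = p$ to see the second product is defined. For the inverse axiom G5, with $i([p,f]) = [p,f]^{-1} = [f[p], f^\ast]$: Lemma \ref{lemma:p and f[p]}.5 gives $(f[p], f^\ast)\in\ii$ and $f^\ast[f[p]] = p$, so $d([f[p],f^\ast]) = f[p] = r([p,f])$ and $r([f[p],f^\ast]) = f^\ast[f[p]] = p = d([p,f])$. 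Then $[p,f][p,f]^{-1} = [p,f][f[p],f^\ast] = [p, ff^\ast]$, and I would show $[p,ff^\ast] = [p,e] = u(d([p,f]))$ using Lemma \ref{lemma:p and f[p]}.7, which gives $ff^\ast \sim_p e$; the identity $[p,f]^{-1}[p,f] = u(r([p,f]))$ follows symmetrically from $f^\ast f \sim_{f[p]} e$.

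The one step requiring genuine care, rather than routine substitution, is matching the domain-of-definition conditions for the iterated products in the associativity check and confirming that the two representatives $[p, ff^\ast]$ and $[p,e]$ genuinely coincide in $\ii/\!\sim$ --- that is, that $ff^\ast \sim_p e$ really holds in the quotient. This is exactly the content of Lemma \ref{lemma:p and f[p]}.7, so the main conceptual work has already been front-loaded into that lemma and into Proposition \ref{prop:sim compos e invol}; the present proof is essentially the assembly of those pieces. I therefore expect no serious obstacle, only the bookkeeping of verifying that each product appearing in G3--G5 is defined (i.e.\ the relevant $q = f[p]$ equalities hold) before invoking the corresponding algebraic identity.
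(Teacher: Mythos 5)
Your proof is correct and takes essentially the same route as the paper's: a direct verification of G1--G5 from Definition \ref{def:groupoid}, using $e[p]=p$ for G2 and G4, Lemma \ref{lemma:p and f[p]}.6 for G3 and the associativity/definedness bookkeeping, and Lemma \ref{lemma:p and f[p]}.5 and \ref{lemma:p and f[p]}.7 for G5. You merely spell out a few steps (the unit laws and the matching domain-of-definition conditions) that the paper dismisses as immediate.
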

\begin{proof}
G2: Recall that for every $p\in \p_e$, $(p, e)\in \ii$; then $e[p] = p$ hence $d(u(p)) = d([p, e]) = p = e[p] = r([p, e]) = r(u(p))$. \\
G3: The associativity of the product readily follows from the definitions using Lemma \ref{lemma:p and f[p]}.6.  If $q = f[p]$, then, by Lemma \ref{lemma:p and f[p]}.6, $r([p, f][q, g]) = r([p, fg]) = fg[p] = g[f[p]] = g[q] = r([q, g])$. \\
G4: immediate from the definitions.\\
G5: $d([p, f]^{-1}) = d([f[p], f^\ast]) = f[p] = r([p, f])$; by Lemma \ref{lemma:p and f[p]}.5, $r([p, f]^{-1}) = r([f[p], f^\ast]) = f^\ast[f[p]] = p = d([p, f])$. By Lemma \ref{lemma:p and f[p]}.7, $[p, f][p, f]^{-1} = [p, f][f[p], f^\ast] = [p, ff^\ast] = [p, e] = u(p) = u(d([p, f]))$. Likewise,  $p = f^\ast[f[p]]$ implies that the product $[p, f]^{-1}[p, f]$ is well defined and by \ref{lemma:p and f[p]}.7 $[p, f]^{-1}[p, f] = [f[p], f^\ast f] = [f[p], e] = u(r([p, f]))$.
\end{proof}

\section{Spatial SGF-quantales and their SP-groupoids}

The last ingredient needed in $\g(\Q)$ is a topology on $G_0$. For this, we need a condition on $\Q$ which guarantees $\Q_e$ to be a spatial frame. The notion of spatial SGF-quantales that we are going to introduce in this section generalizes spatial locales,
i.e.\ the locales that are meet-generated by their prime elements. 
\begin{definition}
\label{def:SPQ}  For every SGF quantale $\Q$ and every $[p,f]\in \ii/\sim$, let
$$\ii_{[p,f]}=\{g\in \ii(\Q)\mid d(g)\leq p \mbox{ or } (p, g)\not\sim (p, f)\}\quad \mbox{ and }\quad I_{[p, f]} = \bigvee \ii_{[p, f]}.$$
$\Q$ is  \emph{spatial} if:\\
SPQ1. for every $(p,f)\in \ii$, $f\nleq I_{[p,f]} $.\\
SPQ2.
 For every $a\in \Q$, $a =\bigwedge \{I_{[p,f]}\mid a\leq I_{[p, f]} \}.$
\end{definition}
\noindent It immediately follows from the definition that $p\in \ii_{[p, f]}$, hence $p\leq I_{[p, f]}$ for every $(p, f)\in \ii$.  It is also immediate to see that $(p, f)\sim (p', f')$ implies that $\ii_{[p, f]} = \ii_{[p', f']}$, and that if $g\nleq I_{[p, f]}$ then $g\sim_p f$.

\begin{lemma}
\label{lemma:I_[p, f]}
For every SGF-quantale $\Q$ s.t.\ SPQ1 holds and every $g\in \ii(\Q)$,  $g\leq I_{[p, f]}$ iff  $g\in \ii_{[p, f]}$.
\end{lemma}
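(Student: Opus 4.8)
The plan is to prove the two implications separately. The forward implication is immediate, and the reverse one is the only substantive step, which I would handle by contraposition.

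First, the implication $g\in \ii_{[p,f]}\Rightarrow g\leq I_{[p,f]}$ holds by definition, since $I_{[p,f]}=\bigvee \ii_{[p,f]}$ is an upper bound of $\ii_{[p,f]}$. For the converse I would argue contrapositively: assuming $g\notin \ii_{[p,f]}$, I would derive $g\nleq I_{[p,f]}$. Unfolding the definition of $\ii_{[p,f]}$, the hypothesis $g\notin \ii_{[p,f]}$ means precisely that $d(g)\nleq p$ \emph{and} $(p,g)\sim (p,f)$, i.e.\ $g\sim_p f$. The first conjunct guarantees $(p,g)\in \ii$, so SPQ1 is applicable to the pair $(p,g)$ and yields $g\nleq I_{[p,g]}$.

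It then remains to identify $I_{[p,g]}$ with $I_{[p,f]}$, and this is exactly where the incidence $g\sim_p f$ is used. By the observation recorded just before the statement of this lemma, $(p,g)\sim (p,f)$ implies $\ii_{[p,g]}=\ii_{[p,f]}$, and hence $I_{[p,g]}=I_{[p,f]}$. Combining this with the previous step gives $g\nleq I_{[p,f]}$, which completes the contrapositive and therefore the proof.

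The single point worth flagging — there is no genuine obstacle and no calculation — is that SPQ1 must be invoked for the pair $(p,g)$ rather than for the given pair $(p,f)$: applying SPQ1 directly to $(p,f)$ only yields $f\nleq I_{[p,f]}$, which says nothing about $g$ on its own. The whole argument hinges on the invariance of the index family $\ii_{[p,f]}$ under incidence, which lets us transport SPQ1 along the equivalence class of $(p,f)$.
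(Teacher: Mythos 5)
Your proof is correct and takes essentially the same route as the paper's: the paper likewise dismisses the easy direction, then supposes $g\leq I_{[p,f]}$ with $g\sim_p f$, invokes the invariance $I_{[p,f]}=I_{[p,g]}$ recorded just before the lemma, and contradicts SPQ1 applied to the pair $(p,g)$. Your contrapositive phrasing and the explicit remark that SPQ1 must be applied at $(p,g)$ rather than $(p,f)$ are only presentational differences from the paper's proof by contradiction.
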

\begin{proof}
The right-to-left direction is clear. Conversely, if $g\leq I_{[p, f]}$ and $g\sim_p f$, then $I_{[p, f]} = I_{[p, g]}$ so $g\leq I_{[p, g]}$, i.e.\ $g\wedge I_{[p, g]} = g$, contradicting SPQ1.
\end{proof}
\noindent An immediate consequence of this lemma is that if $g\sim_p f$ then $g\nleq I_{[p, f]}$ (so indeed these two conditions are equivalent).
Let us verify that the  axioms for spatial quantales  are sound:
\begin{prop}
\label{prop:Q(G, S) is spatial}
For every SP-groupoid $\g$ and every selection base $\s$ of $\g$, the SGF-quantale $\Q(\g, \s)$ is spatial.
\end{prop}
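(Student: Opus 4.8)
The plan is to verify the two spatiality axioms SPQ1 and SPQ2 directly, translating each into concrete statements about bisection images and the topology of $G_0$, using the dictionary supplied by Proposition \ref{prop:S=I(Q(G, S))} and the Remark following Definition \ref{def:inters}. Recall that under this dictionary $\Q_e$ is identified with $\Omega(G_0)$, the primes $\p_e$ with the complements $\overline{p}^{\,c}$ of point closures (this is where soberness of $G_0$ enters, via Proposition \ref{prop:S=I(Q(G, S))}.3), and $\ii(\Q)=\s$. For a pair $(P, F)\in\ii$ with associated local bisection $f$, the condition $d(F)\nleq P$ means (translating $d(F)=FF^\ast = u[\mathrm{dom}(f)]$ and $P=u[G_0\setminus\overline{p}]$) exactly that $p\in\mathrm{dom}(f)$. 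The incidence class of $[P,F]$ then consists of those $G\in\s$ whose local bisection is defined at $p$ and agrees with $f$ on a neighborhood-trace of $p$ inside $\overline{p}$.

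First I would compute $I_{[P,F]}$ explicitly. By Lemma \ref{lemma:I_[p, f]} (which I may invoke once SPQ1 is known, but for the computation I only need the defining join), $\ii_{[P,F]}$ is the set of all $G\in\s$ with either $p\notin\mathrm{dom}(g)$ or $F\not\sim_{\overline{p}^{\,c}} G$. Since elements of $\Q(\g,\s)$ are just unions of bisection images, $I_{[P,F]}=\bigcup\ii_{[P,F]}$ is a subset of $G_1$, and the key observation is that an arbitrary point $x=g(q)$ (for $g\in\s$, $q\in\mathrm{dom}(g)$) lies in $I_{[P,F]}$ unless $q=p$ and $g(p)=f(p)$, i.e.\ unless $x=f(p)$. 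More precisely I would argue that $f(p)\notin I_{[P,F]}$: if $f(p)$ were covered by some $G\in\ii_{[P,F]}$, then $g$ would be defined at $p$ with $g(p)=f(p)$, forcing $F\sim_{\overline{p}^{\,c}} G$ (take $H$ to be all of $\mathrm{dom}(f)\cap\mathrm{dom}(g)$, an open set containing $p$ on which the bisections can be compared), contradicting $G\in\ii_{[P,F]}$. This immediately gives SPQ1, since $f(p)\in F$ but $f(p)\notin I_{[P,F]}$ shows $F\nleq I_{[P,F]}$.

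For SPQ2 I must show that every $A\in\Q(\g,\s)$ equals $\bigcap\{I_{[P,F]}\mid A\subseteq I_{[P,F]}\}$. The inclusion $A\subseteq\bigcap(\cdots)$ is definitional. For the reverse inclusion it suffices to show that for every point $x\in G_1\setminus A$ there is a pair $(P,F)\in\ii$ with $A\subseteq I_{[P,F]}$ but $x\notin I_{[P,F]}$. Given $x\notin A$, by SB5 there is some $F\in\s$ and $p\in\mathrm{dom}(f)$ with $x=f(p)$; set $P=u[G_0\setminus\overline{p}]\in\p_e$, so $(P,F)\in\ii$ and, by the computation above, $x=f(p)\notin I_{[P,F]}$. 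It remains to check $A\subseteq I_{[P,F]}$, i.e.\ that no bisection image comprising $A$ passes through the single excluded point $f(p)$ in an incident way; equivalently, that for every $G\in\s$ with $G\subseteq A$ we have $G\in\ii_{[P,F]}$. This is where the excluded point argument must be made uniform: a generator $G\subseteq A$ fails to lie in $\ii_{[P,F]}$ precisely when $p\in\mathrm{dom}(g)$ and $g(p)=f(p)=x$, which would put $x=g(p)\in G\subseteq A$, contradicting $x\notin A$. So every generator of $A$ lies in $\ii_{[P,F]}$, whence $A=\bigcup\{G\in\s\mid G\subseteq A\}\subseteq I_{[P,F]}$.

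The main obstacle is the role of axiom SB4, which the authors flag as essential precisely here. The subtlety is that $F\sim_{\overline{p}^{\,c}} G$ requires the two local bisections to agree not merely at the point $p$ but on an open set whose intersection with $\overline{p}$ is nonempty; when $G_0$ is not $T_1$ the closure $\overline{p}$ is larger than $\{p\}$, and equality at $p$ alone need not propagate to such an incidence witness. SB4 guarantees that the agreement locus $\{q\mid f(q)=g(q)\}$ is a union of locally closed sets, and hence that whenever $f$ and $g$ agree at $p$ their agreement set meets every open neighborhood of $p$ inside $\overline{p}$ in a controlled way, so that an honest incidence witness $H$ exists. The careful point in the proof is therefore to use SB4 to upgrade pointwise agreement at $p$ to the open-set agreement demanded by Definition \ref{def:inters}, making the ``$x\notin I_{[P,F]}$ iff $x=f(p)$'' equivalence exact; the rest is the bookkeeping of unions sketched above.
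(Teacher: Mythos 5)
Your proposal is correct and takes essentially the same route as the paper's own proof: the same dictionary via Proposition \ref{prop:S=I(Q(G, S))}, the same verification of SPQ1 by showing $f(p)\notin I_{[P,F]}$, with SB4 used exactly as the paper uses it (your initial parenthetical --- that $H=\mathrm{dom}(f)\cap\mathrm{dom}(g)$ already witnesses incidence --- would not suffice, but your closing paragraph supplies the correct fix: $p\in H\cap C$ with $C$ closed forces $\overline{p}\subseteq C$, hence agreement on $H\cap\overline{p}$), and the same separation argument for SPQ2, which only needs the trivial direction that disagreement at $p$ rules out incidence. The only cosmetic differences are that you argue pointwise via SB5 where the paper argues generator-wise via SGF1, and that you prove the stronger set-intersection form of SPQ2, which dominates the lattice meet and hence suffices.
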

\begin{proof}
Recall that $\ii(\Q(\g, \s)) = \s$ and the prime elements of $\Q(\g, \s)_e$ are exactly those $P=u[G_0\setminus \overline{p}]$ for $p\in G_0$ (cf.\ Proposition \ref{prop:S=I(Q(G, S))}). For every $F\in \s$, let $f: U_f\to G_1$ be its corresponding local bisection; in particular for every $H\in \Q(\g, \s)_e$, its corresponding local bisection is the restriction of the structure map $u$ to some open subset of $G_0$ that we denote $H$ as well. Then $HF$ is the image of $f_{|H}$, wherever defined. Moreover, $\ii_{[P, F]}$ (resp.\ $I_{[P, F]}$) is (the union of) the collection of all the $G\in \s$ corresponding to local bisections $g: U_g\to G_1$ s.t.\ either $U_g\cap \overline{p} = \varnothing$ (i.e.\ $p\notin U_g$) or $HG\not\subseteq PG\cup F$ for every  open set $H$ s.t.\ $p\in H\subseteq U_f\cap U_g$. \\ 
SPQ1: Let $P=u[G_0\setminus \overline{p}]$; it is enough to show that $f(p)\not\in I_{[P,F]}$. Suppose that $f(p)\in I_{[P,F]}$; then there exists some $g$ such that $G\not\sim_{P}F$ and $g(p)=f(p)$. By SB4, $p\in H\cap C\subseteq\{q\in G_0\mid f(q) = g(q)\}$ for some $H$ open and $C$ closed subsets of $G_0$. Then $H\cap\overline{p}\subseteq H\cap C \subseteq \{q\in G_0\mid f(q) = g(q)\}$. This means that $g_{|H}$ coincides with $f$ outside of $P$. In other words, $HG\subseteq PG\cup F$, contradicting the hypothesis that $G\not\sim_{P} F$.\\
SPQ2: Let $A\in \Q(\g, \s)$ and let $G\in \s$ s.t.\  $G\not\subseteq A$. Then $g(p)\not\in A$ for some $p\in U_g$. Let $P = u[G_0\setminus\overline{p}]$, and let us show that if $F\in \s$ and $F\subseteq A$, then $F\in \ii_{[P, G]}$: indeed, if $F\subseteq A$ and $p\in U_f$ then $f(p)\neq \g(p)$, since by assumption $g(p)\not\in A$, therefore $F\not\sim_{P} G$. By SGF1, this shows that $A\subseteq I_{[P, G]}$. Since by Lemma \ref{lemma:I_[p, f]}, $G\not\subseteq\ii_{[P, G]}$, then $G \not\subseteq \bigwedge\{I_{[Q, G']}\mid A\leq I_{[Q, G']}\}$, which concludes the proof of the non trivial inclusion.
\end{proof}

\begin{lemma}
\label{lemma:g are special meets}
 If $\Q$ is spatial then for every $g\in \ii(\Q)$, $$g =\bigwedge \{I_{[p,f]}\mid d(g)\leq p \mbox{ or } (p, g)\not\sim (p, f) \}.$$
\end{lemma}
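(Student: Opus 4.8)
The plan is to prove the claimed equality by the usual two inclusions, exploiting the spatiality axiom SPQ2 applied to the element $g$ itself. Let me abbreviate the right-hand side as $M = \bigwedge\{I_{[p,f]}\mid d(g)\leq p \text{ or } (p,g)\not\sim(p,f)\}$, and note the index set is exactly the set of those $(p,f)\in\ii$ for which $g\leq I_{[p,f]}$: indeed by Lemma \ref{lemma:I_[p, f]} we have $g\leq I_{[p,f]}$ iff $g\in\ii_{[p,f]}$ iff ($d(g)\leq p$ or $(p,g)\not\sim(p,f)$). So $M = \bigwedge\{I_{[p,f]}\mid g\leq I_{[p,f]}\}$.

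For the inclusion $g\leq M$, this is immediate: $M$ is a meet taken precisely over those $I_{[p,f]}$ that lie above $g$, so $g$ is a lower bound and hence $g\leq M$. The substance is the reverse inclusion $M\leq g$. Here I would invoke SPQ2 for $a=g$, which gives $g = \bigwedge\{I_{[q,h]}\mid g\leq I_{[q,h]}\}$. But by the reformulation of the index set above, the condition $g\leq I_{[q,h]}$ is exactly $d(g)\leq q$ or $(q,g)\not\sim(q,h)$, so the meet in SPQ2 ranges over the same index set as $M$. Therefore $M$ and $g$ are the same meet, and in particular $M\leq g$, completing the argument.

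The only real content, then, is the identification of the index set, which rests entirely on Lemma \ref{lemma:I_[p, f]} together with the definition of $\ii_{[p,f]}$. I would spell this out as the first step, since once it is in place both inclusions are formal. Concretely, I would first record that for any $(p,f)\in\ii$, membership $g\in\ii_{[p,f]}$ unwinds by definition to the disjunction $d(g)\leq p$ or $(p,g)\not\sim(p,f)$, and that by Lemma \ref{lemma:I_[p, f]} this is equivalent to $g\leq I_{[p,f]}$. With this equivalence, the set $\{I_{[p,f]}\mid g\leq I_{[p,f]}\}$ appearing in SPQ2 (instantiated at $a=g$) coincides as a set with $\{I_{[p,f]}\mid d(g)\leq p \text{ or }(p,g)\not\sim(p,f)\}$, and the lemma follows.

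The main obstacle I anticipate is essentially bookkeeping rather than any genuine difficulty: one must be careful that SPQ2 is stated for \emph{every} $a\in\Q$, so in particular it applies to $g\in\ii(\Q)\subseteq\Q$, and one must not accidentally restrict the meet to functional or partial-unit elements. As long as SPQ2 is read as a statement about arbitrary $a$, the identification of index sets is exact and the proof is a one-line appeal to SPQ2 after the preliminary reformulation. No appeal to SPQ1 is needed beyond its implicit use inside Lemma \ref{lemma:I_[p, f]}.
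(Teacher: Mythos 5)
Your proof is correct and follows essentially the same route as the paper's: apply SPQ2 at $a=g$ and use Lemma \ref{lemma:I_[p, f]} to identify the index set $\{(p,f)\mid g\leq I_{[p,f]}\}$ with $\{(p,f)\mid d(g)\leq p \mbox{ or } (p,g)\not\sim(p,f)\}$. Your version merely spells out the two inclusions that the paper compresses into one chain of equalities.
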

\begin{proof}
Let  $g\in \ii(\Q)$. By {SPQ2} and Lemma \ref{lemma:I_[p, f]}, $g =\bigwedge \{I_{[p,f]}\mid g\leq I_{[p, f]}\} = \bigwedge \{I_{[p,f]}\mid g \in \ii_{[p, f]}\} = \bigwedge \{I_{[p,f]}\mid d(g)\leq p \mbox{ or } (p, g)\not\sim (p, f) \}.$
\end{proof}


\begin{prop}
\label{prop:Q_e spatial} If $\Q$ is spatial then $\Q_e$ is a spatial frame.\end{prop}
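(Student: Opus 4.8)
The plan is to show directly that every $h\in\Q_e$ is the meet of the prime elements of $\Q_e$ lying above it; this is exactly spatiality of the frame $\Q_e$. Fix $h\in\Q_e$ and set $m=\bigwedge\{p\in\p_e\mid h\le p\}$, a meet which may be computed either in $\Q_e$ or in $\Q$ since both agree on families of elements below $e$. Clearly $h\le m$, and it remains to prove $m\le h$. Since $h\le e$ gives $d(h)=hh^\ast=h$ (using Proposition \ref{prop:qe}.1), Lemma \ref{lemma:I_[p,f]} together with SPQ2 yields the representation $h=\bigwedge\{I_{[p,f]}\mid h\le p\text{ or }h\not\sim_p f\}$. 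Hence it suffices to check that $m\le I_{[p,f]}$ for every index $(p,f)$ occurring in this meet.

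I would split into two cases. If $h\le p$, then $p$ is one of the primes defining $m$, so $m\le p\le I_{[p,f]}$. If $h\nleq p$, then by the indexing condition $h\not\sim_p f$, and since $m\in\Q_e\subseteq\ii(\Q)$, Lemma \ref{lemma:I_[p,f]} reduces the goal to showing $m\not\sim_p f$, i.e.\ that $m\in\ii_{[p,f]}$. This in turn rests on the following monotonicity property of the incidence relation, which I expect to be the crux: if $h\le m$ in $\Q_e$ and $h\nleq p$, then $m\sim_p f$ implies $h\sim_p f$ (incidence at $p$ is inherited by any restriction whose domain still meets $p$). Applying its contrapositive to $h\not\sim_p f$ gives $m\not\sim_p f$, completing the case and hence the equality $m=h$.

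To establish the monotonicity property I would unfold $m\sim_p f$ to obtain a witness $k\le d(m)\wedge d(f)=m\wedge d(f)$ with $k\nleq p$ and $km\le pm\vee f$; since $k\le m$ and products coincide with meets on $\Q_e$, this simplifies to $k\le pm\vee f$. I then set $k'=k\wedge h=kh$. The main obstacle is that $\Q$ is not a frame, so I cannot distribute the meet with $h$ over the join $pm\vee f$; instead I would right-multiply the inequality $k\le pm\vee f$ by the idempotent $h$ and use right distributivity D2, obtaining $k'=kh\le (pm)h\vee fh\le ph\vee f$, where $(pm)h=p\wedge m\wedge h\le ph$ and $fh\le f$. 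The other delicate point is that $k'\nleq p$: this is exactly where primeness of $p$ enters, since $k\nleq p$ and $h\nleq p$ force $k\wedge h\nleq p$. As $k'\le d(h)\wedge d(f)$ and $k'h=k'\le ph\vee f$, the element $k'$ witnesses $h\sim_p f$, as required, and the proof that $\Q_e$ is a spatial frame is complete.
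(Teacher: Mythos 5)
Your proof is correct and follows essentially the same route as the paper: your ``monotonicity of incidence'' property, proved by multiplying the witness $k$ by $h$ and invoking primeness of $p$, is exactly the Claim in the paper's proof (that $h\nleq q$ and $g\sim_q e$ imply $g\sim_q h$), generalized from $e$ to an arbitrary $m\in\Q_e$ above $h$. The surrounding bookkeeping differs only superficially: the paper meets the representation of $h$ from Lemma \ref{lemma:g are special meets} with $e$ and sandwiches $\bigwedge\{p\in\p_e\mid h\leq p\}$ between $h$ and $\bigwedge\{I_{[q,g]}\wedge e\mid h\leq q\}$, whereas you compare $m=\bigwedge\{p\in\p_e\mid h\leq p\}$ with each $I_{[p,f]}$ directly via the easy direction of Lemma \ref{lemma:I_[p, f]}.
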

\begin{proof} Let $h\in \Q_e$ and let us show that $h = \bigwedge \{p\in \p_e\mid h\leq p\}$. Since $\Q$ is spatial, then by Lemma \ref{lemma:g are special meets}
$h=\bigwedge\{I_{[q,g]}\mid h\leq q \mbox{ or } (q, g)\not\sim (q, h) \}.$ \\
{\em Claim.} If $h\nleq q$, then  for every $g\in \ii(\Q)$ s.t.\ $(q, g)\in \ii$, if  $(q, g)\not\sim (q, h)$ then $(q, g)\not\sim(q, e)$.\\
From the claim it follows that $e\leq I_{[q, g]}$ for every $(q, g)\in \ii$ s.t.\ $h\nleq q$ and $(q, g)\not\sim (q, h)$.
Hence, $$h = h\wedge e=\bigwedge\{I_{[q,g]}\wedge e\mid h\leq q \mbox{ or } (q, g)\not\sim (q, h) \} = \bigwedge\{I_{[q,g]}\wedge e\mid h\leq q\}. $$
Since for every $(q, g)\in \ii$ s.t.\ $h\leq q$ there exists some $p = q$ s.t.\ $h\leq p$ and $p\leq I_{[q, g]}$, we can conclude that $$h \leq \bigwedge \{p\in \p_e\mid h\leq p\}\leq \bigwedge\{I_{[q,g]}\wedge e\mid h\leq q\} = h.$$
To finish the proof, we need to prove the claim: if $h\nleq q$ and $g\sim_q e$ then there exists some $k\in \Q_e$ s.t.\ $k\nleq q$, $k\leq d(g)\wedge e = d(g)$ and $kg\leq qg\vee e$. Let $h' = hk$: then $h'\nleq q$, $h'\leq d(g)\wedge h$ and $h'g\leq hqg\vee h\leq qg\vee h$; this shows that $g\sim_q h$.
\end{proof}

\begin{prop}
\label{prop:G(Q) is SP groupoid}
For every spatial SGF-quantale $\Q$,  \\
1. every element $f\in\ii(\Q)$ corresponds to a local bisection of $\g(\Q)$.\\
2. $\g(\Q)$ is an SP-groupoid.
\end{prop}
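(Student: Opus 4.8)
The plan is to equip $G_0 = \p_e$ with a topology by transporting the spatial frame structure of $\Q_e$, and then to verify that each $f \in \ii(\Q)$ gives rise to a genuine local bisection of $\g(\Q)$ in the sense of Definition \ref{def:localbisection}, from which the selection property follows. By Proposition \ref{prop:Q_e spatial}, $\Q_e$ is a spatial frame, so it is isomorphic to the frame of opens of a sober space whose points are exactly the prime elements $\p_e = G_0$. Concretely, I would declare $U \subseteq \p_e$ open iff $U = \{p \in \p_e \mid h \nleq p\}$ for some $h \in \Q_e$; spatiality guarantees this assignment $h \mapsto U_h$ is a frame isomorphism onto $\Omega(G_0)$, so $G_0$ becomes a sober space and $\Q_e \cong \Omega(G_0)$.

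\medskip

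\noindent\textbf{Step 1 (the domain of $f$ is open).}
For $f \in \ii(\Q)$, the candidate domain is $U_{d(f)} = \{p \in \p_e \mid d(f) \nleq p\}$, which is open by construction, and by the definition of $\ii$ this is exactly the set of $p$ with $(p,f) \in \ii$. The local bisection $s_f$ should be the map $p \mapsto [p,f]$ on $U_{d(f)}$. Since $d([p,f]) = p$ by Definition \ref{def:groupoid}, we get $d \circ s_f = \mathrm{id}_{U_{d(f)}}$ immediately.

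\medskip

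\noindent\textbf{Step 2 (the range map is a partial homeomorphism).}
Here I would use $r(s_f(p)) = r([p,f]) = f[p]$, so the composite $r \circ s_f$ is the assignment $p \mapsto f[p]$. The key is that by Lemma \ref{lemma:transform} the operation $()^f \colon \Q_{d(f)} \to \Q_{r(f)}$ is a frame isomorphism, and by Lemma \ref{lemma:transform}.2 it carries primes of $\Q_{d(f)}$ bijectively to primes of $\Q_{r(f)}$; under the identification of $\p_e \cap \{h \nleq p\text{-type sets}\}$ with points, this is precisely the map $p \mapsto f[p]$. Thus $p \mapsto f[p]$ is a bijection from $U_{d(f)}$ onto $U_{r(f)}$ whose inverse is $q \mapsto f^\ast[q]$ by Lemma \ref{lemma:p and f[p]}.5. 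That both directions are continuous (hence it is a homeomorphism) follows from the frame-isomorphism property of $()^f$: an open subset of $U_{r(f)}$ corresponds to some $k \le r(f)$, and its preimage corresponds to $k^{f^\ast} \le d(f)$, again using Lemma \ref{lemma:transform}.1 together with Lemma \ref{lemma:p and f[p]}.2 to track which primes are excluded.

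\medskip

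\noindent\textbf{Step 3 (the selection property).}
Finally I would invoke SGF1 to conclude part 2: since $\Q$ is $\bigvee$-generated by $\ii(\Q)$, every element of $G_1 = \ii/\!\sim$ is of the form $[p,f]$ with $f \in \ii(\Q)$, so $G_1$ is covered by the images $s_f[U_{d(f)}]$ of the local bisections just constructed, which is exactly Definition \ref{def:SPG}. The soberness of $G_0$ was secured by the spatial-frame isomorphism.

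\medskip

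\noindent\textbf{Main obstacle.}
I expect the crux to be Step 2: making precise that the abstractly-defined operation $f[-]$ on primes coincides with the point-level map induced by the frame isomorphism $()^f$, and verifying continuity in \emph{both} directions rather than just bijectivity. The properties collected in Lemma \ref{lemma:p and f[p]} (especially items 2, 3, and 5) are designed exactly to control how $f[-]$ interacts with the order and with primality, so the real work is assembling them to show that $k \nleq q$ iff $k^{f^\ast} \nleq f^\ast[q]$, which is what pins down continuity of the inverse. Everything else is a routine translation between the frame $\Q_e$ and the sober space $G_0$.
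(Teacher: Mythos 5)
Your proposal is correct and follows essentially the same route as the paper's proof: topologize $G_0=\p_e$ via Proposition \ref{prop:Q_e spatial}, define $s_f(p)=[p,f]$ on $U_{d(f)}$, and show that $r\circ s_f\colon p\mapsto f[p]$ is a partial homeomorphism onto $U_{r(f)}$ with inverse $q\mapsto f^\ast[q]$ using Lemma \ref{lemma:p and f[p]}.2 and \ref{lemma:p and f[p]}.5 (the paper treats your ``crux'' in Step 2 exactly this way, only more tersely). One cosmetic remark: SGF1 is not actually needed in Step 3, since every element of $G_1=\ii/\!\sim$ is by construction a class $[p,f]$ with $d(f)\nleq p$, hence lies in $s_f[U_{d(f)}]$, which is precisely the paper's one-line argument for part 2.
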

\begin{proof}
1. By Prop \ref{prop:Q_e spatial}, every $h\in \Q_e$ can be identified with the open set $U_h = \{p'\in \p_e\mid h\nleq p'\}$ (cf.\ \cite{Jo82}). Then for every $f\in\ii(\Q)$, the map $s_f:U_{d(f)}\to \ii/\sim$ defined by  $s_f(p') = [p', f]$ is a local bisection of $\g(\Q)$: indeed, $d\circ s_f = id$ and it readily follows from Lemma \ref{lemma:p and f[p]}.2 and \ref{lemma:p and f[p]}.5 that $r\circ s_f$ is open and its inverse is $r\circ s_{f^\ast}$ which is also open.\\
2. If $[p, f]\in \ii$, then $[p, f]$ belongs to the bisection image corresponding to the local bisection $s_f$ defined above.
\end{proof}
\section{Spatial SGF-quantales are GQs}
\subsection{The canonical map}
\begin{definition}\label{def:alpha} For every SGF-quantale $\Q$, $\alpha:\Q\to\p(\ii/\sim)$ is defined by $$\alpha(a)=\{[p,f]\ |\ a\not\leq I_{[p,f]}\}.$$
\end{definition}
\begin{theorem}\label{thm:alpha} For every SGF-quantale $\Q$,

\noindent 1. $\alpha(\bigvee_i a_i)=\bigcup_i\alpha(a_i)$ for any
family $\{a_i|i\in I\}$ of elements of $\Q$.

\noindent 2. if $\Q$ is  spatial, then $\alpha$ is
an embedding.

\noindent 3. $\alpha(ab)=\alpha(a)\alpha(b)$ for any $a,b\in\Q$.

\noindent 4. $\alpha(a^\ast)=\alpha(a)^\ast$ for any $a\in\Q$.

\noindent 5. $\alpha(1)=G_1$ and $\alpha(e)=u(G_0)$.

\noindent
So $\alpha$ is a strong and strictly unital morphism of unital involutive quantales.
\end{theorem}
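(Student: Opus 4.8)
The plan is to prove the five claims in the order listed, since each builds on the definitions of $\alpha$ and the incidence relation, and most of the heavy machinery has already been set up in Lemmas \ref{lemma:p and f[p]} and \ref{lemma:I_[p, f]} and in Propositions \ref{prop:sim compos e invol} and \ref{prop:Q(G, S) is spatial}. For item 1, the join-preservation, I would unfold the definition: $[p,f]\in\alpha(\bigvee_i a_i)$ iff $\bigvee_i a_i\nleq I_{[p,f]}$. Since $I_{[p,f]}$ is a fixed element of $\Q$ and joins are suprema, $\bigvee_i a_i\leq I_{[p,f]}$ holds iff $a_i\leq I_{[p,f]}$ for every $i$; negating, $\bigvee_i a_i\nleq I_{[p,f]}$ iff $a_j\nleq I_{[p,f]}$ for some $j$, which is exactly $[p,f]\in\bigcup_i\alpha(a_i)$. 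This is purely order-theoretic and should be a one-line argument.

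For item 2, injectivity under spatiality, I would use SPQ2 directly: if $\alpha(a)=\alpha(b)$, then for every $[p,f]$ we have $a\leq I_{[p,f]}$ iff $b\leq I_{[p,f]}$, so the two meets $\bigwedge\{I_{[p,f]}\mid a\leq I_{[p,f]}\}$ and $\bigwedge\{I_{[p,f]}\mid b\leq I_{[p,f]}\}$ coincide; by SPQ2 these equal $a$ and $b$ respectively, so $a=b$. For item 4, the involution, I would combine item 1 (so that it suffices to check on $\ii(\Q)$, which $\bigvee$-generates $\Q$ by SGF1) with the bijection $[p,f]\mapsto[p,f]^{-1}=[f[p],f^\ast]$ furnished by the groupoid structure: the content is precisely Proposition \ref{prop:sim compos e invol}.2 together with Lemma \ref{lemma:I_[p,f]}, which tells us that $f^\ast\nleq I_{[q,g^\ast]}$ iff $f\nleq I_{[g[q],g]}$ after translating through $f[p]$. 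I would phrase this as showing $[p,f]\in\alpha(a)$ iff $[p,f]^{-1}\in\alpha(a^\ast)$ for generators $a=g$, reducing to incidence of $g^\ast$ and $f^\ast$.

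The main obstacle will be item 3, multiplicativity, $\alpha(ab)=\alpha(a)\alpha(b)$. By item 1 and SGF1 it suffices to verify this for $a=f$, $b=g$ in $\ii(\Q)$, so the claim becomes a statement about incidence classes: $[p,h]\in\alpha(fg)$ should correspond to the existence of a composable pair $[p,h_1]\in\alpha(f)$, $[f[p],h_2]\in\alpha(g)$ with $[p,h]=[p,h_1][f[p],h_2]=[p,h_1h_2]$. The forward direction should follow from Lemma \ref{lemma:p and f[p]}.6 (which governs how $(\cdot)[p]$ interacts with products) and the characterization $g\nleq I_{[p,f]}$ iff $g\sim_p f$ noted after Lemma \ref{lemma:I_[p,f]}; the reverse direction is essentially Proposition \ref{prop:sim compos e invol}.1, which says incidence is compatible with composition. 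The delicate point is matching the quantale product $fg$ with the groupoid product, i.e.\ showing that $fg\sim_p h$ forces a factorization through the fibre point $f[p]$, and here I would lean on Lemma \ref{lemma:p and f[p]}.5--6 to recover $h_1,h_2$ from $h$; care is needed because $\alpha(a)\alpha(b)$ is the \emph{groupoid-set} product, so I must confirm the composability condition $r([p,h_1])=d([f[p],h_2])$ is automatic from $f[p]$ being the common fibre point.

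Finally, item 5 is a direct computation: $\alpha(1)=G_1$ because by SPQ1 every $(p,f)\in\ii$ satisfies $f\nleq I_{[p,f]}$, hence $1\nleq I_{[p,f]}$ for all $[p,f]$, giving $\alpha(1)=\ii/\sim=G_1$; and $\alpha(e)=u(G_0)$ because $e\nleq I_{[p,f]}$ holds iff $e\sim_p f$, which by Lemma \ref{lemma:p and f[p]}.7 and the definition of $\sim$ happens exactly when $[p,f]=[p,e]=u(p)$. Once items 1, 3, 4, 5 are established, the closing sentence---that $\alpha$ is a strong, strictly unital morphism of unital involutive quantales---follows immediately: items 1, 3, 4 give preservation of $\bigvee$, $\cdot$, and $\ast$, while item 5 gives $\alpha(e)=e'=u(G_0)$ (strict unitality) and $\alpha(1)=1'=G_1$ (strongness).
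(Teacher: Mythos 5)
Your proposal is correct and takes essentially the same route as the paper's proof: unfold $\alpha$ through the characterization that $g\nleq I_{[p,f]}$ iff $g\sim_p f$ (Lemma \ref{lemma:I_[p, f]}), reduce to partial units via SGF1 and item 1, invoke Proposition \ref{prop:sim compos e invol}.1--2 for items 3 and 4, and use SPQ1 (resp.\ SPQ2) for item 5 (resp.\ item 2). The only cosmetic differences are that the paper argues item 2 contrapositively (from $a\nleq b$, SGF1 and SPQ2 produce a separating $I_{[q,g]}$ with $a\nleq I_{[q,g]}$ and $b\leq I_{[q,g]}$), and it settles the ``delicate point'' you flag in item 3 by the short computation that $d(g_2)\leq f[p]$ would give $p\,g_1g_2 = g_1g_2$, contradicting $d(g_1g_2)\nleq p$, thereby securing the composability condition at the fibre point $f[p]$ exactly as you anticipated.
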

\begin{proof}
1. $[p, f]\in \bigcup_{i\in I}\alpha(a_i)$  iff $a_i\nleq I_{[p, f]}$ for some $i\in I$ iff $\bigvee_{i\in I} a_i\nleq I_{[p, f]}$ iff $[p, f]\in \alpha(\bigvee_{i\in I} a_i)$.\\
2. If $a\nleq b$, by SGF1 and SPQ2, $f\nleq I_{[q, g]}$ for some $f\in \ii(\Q)$ s.t.\ $f\leq a$ and some $(q, g)\in \ii$ s.t.\ $b\leq I_{[q, g]}$. Then $a\nleq I_{[q, g]}$, i.e.\ $[q, g]\in \alpha(a)$, and $[q, g]\notin \alpha(b).$\\
3. If $a\not\leq I_{[p,f]}$ and $b\not\leq I_{[q,g]}$ then by SGF1, $g_1\not\leq I_{[p,f]}$ and $g_2\not\leq I_{[q,g]}$ for some $g_1, g_2\in \ii(\Q)$ s.t.\ $g_1\leq a$ and $g_2\leq b$. Hence $g_1\sim_p f$ and $g_2\sim_q g$ and so if $q = f[p]$ then by Proposition \ref{prop:sim compos e invol}.1, $g_1g_2\sim_p fg$ which implies by Lemma \ref{lemma:I_[p, f]} that $g_1g_2\nleq I_{[p, fg]}$, i.e.\ $[p, fg]\in \alpha(g_1g_2)\subseteq \alpha(ab)$.
Conversely, if $ab\notin I_{[p, f]}$, then by SGF1 $g_1g_2\nleq I_{[p, f]}$ for some $g_1, g_2\in \ii(\Q)$ s.t.\ $g_1\leq a$ and $g_2\leq b$. Hence $g_1g_2\sim_p f$. In particular, $d(g_1g_2)\nleq p$, which implies, since $d(g_1g_2)\leq d(g_1)$, that $d(g_1)\nleq p$. So by Lemma \ref{lemma:p and f[p]}.1, let $q = f[p]$: then $pg_1 = g_1q$. Let us show that $d(g_2)\nleq q$: if not, then $qd(g_2) = d(g_2)$ and so $pg_1g_2 = g_1qg_2 = g_1qd(g_2)g_2 = g_1d(g_2)g_2 = g_1g_2$, hence $d(g_1g_2)\leq p$. From $d(g_1)\nleq p$ and $d(g_2)\nleq q$ we get $[p, g_1]\in \alpha(g_1)\subseteq\alpha(a)$ and $[q, g_2]\in \alpha(b)$.\\
4. If $a^\ast\not\leq I_{[p,f]}$ then by SGF1, $g^\ast\not\leq I_{[p,f]}$ for some $g\in \ii(\Q)$ s.t.\ $g\leq a$. Hence $g^\ast\sim_p f$, and so, by Proposition \ref{prop:sim compos e invol}.2, $g\sim_{f[p]} f^\ast$ which implies by Lemma \ref{lemma:I_[p, f]} that $g\nleq I_{[f[p], f^\ast]}$, i.e.\ $[p, f]^{-1} \in \alpha(g)\subseteq \alpha(a)$. Hence,  $[p, f] = ([p, f]^{-1})^{-1}\in \alpha(a)^\ast$. Conversely, if $[p, f]\in \alpha(a)^\ast$ then $a\nleq I_{[f[p], f^\ast]}$, then by SGF1 $g\nleq I_{[f[p], f^\ast]}$ for some $g\in \ii(\Q)$ s.t.\ $g\leq a$. Hence $g^\ast\leq a^\ast$ and $g\sim_{f[p]} f^\ast$, and so, by Proposition \ref{prop:sim compos e invol}.2, $g^\ast\sim_p f$ which implies by Lemma \ref{lemma:I_[p, f]} that $g^\ast\nleq I_{[p,f]}$, i.e.\ $[p, f]\in \alpha(g^\ast)\subseteq\alpha(a^\ast)$.
\\
5. Since $f\nleq I_{[p, f]}$ for every $(p, f)\in \ii$, then $\alpha(1) = \{[p, f]\mid \bigvee \ii(\Q)\nleq I_{[p, f]}\} = G_1.$ For the second equality, $u(G_0)\subseteq \alpha(e)$ follows from $e\nleq I_{[p, e]}$ for every $p$. The converse inclusion follows from the fact that $e\nleq I_{[p, f]}$ by definition implies that $[p, f] = [p, e]$.
\end{proof}

\begin{prop}
\label{prop:alpha[I(Q)] selection basis}
For every spatial SGF-quantale $\Q$, $\alpha[\ii(\Q)]$ is a selection base of $\g(\Q)$.
\end{prop}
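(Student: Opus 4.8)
The plan is to verify that $\alpha[\ii(\Q)]$ satisfies the five axioms SB1--SB5 of Definition~\ref{def:selfam}, working inside the SP-groupoid $\g(\Q)$ constructed in Proposition~\ref{prop:G(Q) is SP groupoid}. The key preliminary observation is that $\alpha$ restricts to a bijection between $\ii(\Q)$ and $\alpha[\ii(\Q)]$: indeed, for $f\in\ii(\Q)$ we have $\alpha(f)=\{[p,g]\mid f\nleq I_{[p,g]}\}=\{[p,g]\mid g\sim_p f\}$ by Lemma~\ref{lemma:I_[p, f]}, which is precisely the bisection image of the local bisection $s_f\colon p\mapsto[p,f]$ from Proposition~\ref{prop:G(Q) is SP groupoid}.1. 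Thus $\alpha[\ii(\Q)]\subseteq\s(\g(\Q))$, giving the containment required by the definition of a selection base; moreover, since $\alpha$ is an embedding (Theorem~\ref{thm:alpha}.2), the assignment $f\mapsto\alpha(f)$ is injective.

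For SB1, I would use that $\alpha$ preserves composition and involution (Theorem~\ref{thm:alpha}.3,4) and maps $e$ to the unit $u(G_0)$ (Theorem~\ref{thm:alpha}.5); since $\ii(\Q)$ is an inverse monoid by Proposition~\ref{prop:qe}.3, its $\alpha$-image is a sub inverse monoid of $\s(\g(\Q))$. For SB2, I would invoke Proposition~\ref{prop:Q_e spatial}: every open set $U$ of $G_0=\p_e$ has the form $U_h=\{p\mid h\nleq p\}$ for a unique $h\in\Q_e$, and then $\alpha(h)=\{[p,e]\mid h\nleq p\}=u[U_h]$, so each $u[U]$ lies in $\alpha[\ii(\Q)]$. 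For SB5, the covering condition, I would note that $G_1=\ii/\!\sim$ and every class $[p,f]$ lies in $\alpha(f)$, so $\bigcup\{\alpha(f)\mid f\in\ii(\Q)\}=G_1$; equivalently this is $\alpha(1)=G_1$ from Theorem~\ref{thm:alpha}.5.

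The two substantive axioms are SB3 and SB4. For SB3, suppose $\{\alpha(f_i)\}_{i\in I}$ satisfies $\alpha(f_i)\alpha(f_j)^\ast\subseteq E$ and $\alpha(f_i)^\ast\alpha(f_j)\subseteq E$ for all $i,j$, where $E=u[G_0]=\alpha(e)$. Using that $\alpha$ is an embedding preserving products and involution, these containments translate back into $f_if_j^\ast\leq e$ and $f_i^\ast f_j\leq e$ in $\Q$, i.e.\ the $f_i$ are pairwise compatible in the inverse monoid $\ii(\Q)$; hence $f:=\bigvee_i f_i$ lies in $\ii(\Q)$ (this is where Proposition~\ref{prop:qe}.3 together with SGF1 is used to see the join is again a partial unit). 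Then $\alpha(f)=\bigcup_i\alpha(f_i)$ by Theorem~\ref{thm:alpha}.1, so the union is in $\alpha[\ii(\Q)]$, as required.

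The hardest step, and the reason SB4 is singled out in Remark~\ref{rem:towards pointfree gener} as automatic only in the localic setting, is showing that for $\alpha(f),\alpha(g)\in\alpha[\ii(\Q)]$ the agreement locus $\{p\in G_0\mid s_f(p)=s_g(p)\}=\{p\mid[p,f]=[p,g]\}=\{p\mid f\sim_p g\}$ is a union of locally closed subsets of $G_0$. Here I would argue directly from the definition of the incidence relation: $f\sim_p g$ holds iff $p\in U_{d(f)\wedge d(g)}$ and there is some $h\leq d(f)\wedge d(g)$ with $h\nleq p$ and $hf\leq pf\vee g$. I expect the agreement set to decompose as a union, indexed by suitable $h\in\Q_e$, of sets of the form (an open set where $h$ is ``on'') intersected with (a closed set forcing $hf\leq g$), each of which is locally closed by Proposition~\ref{prop:Q_e spatial}; making this decomposition precise, and verifying that the constraint $hf\leq pf\vee g$ cuts out exactly such a locally closed piece, is the main obstacle and will require careful bookkeeping with the frame $\Q_e$ and its primes. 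Once SB4 is established, all five axioms hold and $\alpha[\ii(\Q)]$ is a selection base for $\g(\Q)$.
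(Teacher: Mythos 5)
Your preliminary identification $\alpha(f)=s_f[U_{d(f)}]=\{[p,g]\mid g\sim_p f\}$ and your treatment of SB1, SB2, SB3 and SB5 are correct and essentially coincide with the paper's own proof (your back-translation of the compatibility conditions through the embedding $\alpha$ in SB3 is a detail the paper leaves implicit; note though that once one has $f_if_j^\ast\leq e$ and $f_i^\ast f_j\leq e$, the join is a partial unit directly by D1, D2 and I3, since $(\bigvee_i f_i)(\bigvee_j f_j)^\ast=\bigvee_{i,j}f_if_j^\ast\leq e$ --- no appeal to Proposition \ref{prop:qe}.3 or SGF1 is needed). The genuine gap is SB4, which you yourself flag as an unresolved ``obstacle'': you conjecture a decomposition into pieces of the form (open set where $h$ is on) $\cap$ (closed set forcing $hf\leq g$), but you neither identify the closed sets nor verify the decomposition, and as stated the closed piece is wrong. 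For $h\leq d(f)\wedge d(g)$, the condition $hf\leq g$ gives $hf\leq hg$ with $d(hf)=h=d(hg)$, hence $hf=hg$ by Proposition \ref{prop:qe}.2; this is precisely the \'etale-style germ condition of Lemma \ref{lm:etale}, which is equivalent to incidence only for inverse quantal frames and is strictly stronger in general, so pieces of that shape would in general cover only part of the agreement locus. Moreover, for a fixed such $h$ the set of points where it applies is open, not closed, so no locally closed decomposition arises this way.

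The missing idea is a one-line monotonicity argument, with the closed piece being simply a point closure. Identify $G_0$ with the spectrum of the spatial frame $\Q_e$ (Proposition \ref{prop:Q_e spatial}), so that $U_h=\{q\in\p_e\mid h\nleq q\}$ and $\overline{p}=\{q\in\p_e\mid p\leq q\}$. Suppose $s_f(p)=s_g(p)$, i.e.\ $[p,f]=[p,g]$, witnessed by some $h\in\Q_e$ with $h\nleq p$, $h\leq d(f)\wedge d(g)$ and $hg\leq pg\vee f$. The witnessing inequality is monotone along specialization: for every $q\in U_h\cap\overline{p}$, from $p\leq q$ we get $hg\leq pg\vee f\leq qg\vee f$, while $h\nleq q$ guarantees $d(f)\nleq q$ and $d(g)\nleq q$, so the same $h$ witnesses $[q,f]=[q,g]$, i.e.\ $s_f(q)=s_g(q)$. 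Hence every point $p$ of the agreement locus lies in the locally closed set $U_h\cap\overline{p}$, which is entirely contained in the locus, and the locus is the union of these sets --- this is exactly the paper's proof of SB4, and exactly the shape predicted by the Remark following Definition \ref{def:inters}, where incidence in $\Q(\g,\s)$ means agreement of the local bisections on a set of the form $H\cap\overline{p}$. What your sketch misses is that the $p$-dependent term in $hg\leq pg\vee f$ is what makes the condition an up-set condition in the prime, so the closed set is $\overline{p}$ itself rather than any set cutting out $hf\leq g$; with that observation the ``careful bookkeeping'' you anticipate evaporates.
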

\begin{proof}
We already showed (see proof of Proposition \ref{prop:G(Q) is SP groupoid}) that for every $f\in \ii(\Q)$, $s_f: U_{d(f)}\to \p(\ii/\sim)$ defined by $s_f(p') = [p', f]$ is a local bisection of $\g(\Q)$. Let us show that $s_f[U_{d(f)}] = \alpha(f)$: $[p, g]\in s_f[U_{d(f)}]$ iff $[p, g] = s_f(p') = [p', f]$ for some $p'\in U_{d(f)}$ iff $g\sim_p f$ iff $f\nleq I_{[p, g]}$ iff $[p, g]\in \alpha(f)$. This shows that $\alpha[\ii(\Q)]$ is a collection of bisection images of $\g(\Q)$. \\
SB2: In particular, for every $h\in \Q_e$, $s_h[U_{d(h)}] = \alpha(h)$. Moreover, notice that $h\nleq p$ implies that $[p, h] = [p, e]$, from which it easily follows that $s_h[U_{d(h)}] = u[U_h]$.\\
SB1: it readily follows from Theorem \ref{thm:alpha} and Proposition \ref{prop:qe}.3.\\
SB3: it readily follows from Theorem \ref{thm:alpha}.3 and .4, and the fact that if $\{f_i\mid i\in I\}\subseteq \ii(\Q)$ s.t.\ $f_if_j^\ast\leq e$ and $f_i^\ast f_j\leq e$ then $\bigcup_{i\in I} f_i\in \ii(\Q)$. \\
SB4: Let $f, g\in \ii(\Q)$ and let $p\in \p_e$ be s.t.\ $s_f(p) = s_g(p)$. So $[p, f] = [p, g]$, i.e.\ $hg\leq pg\vee f$ for some $h\in \Q_e$ s.t.\ $h\nleq p$. Then, for every $q\in \p_e$ s.t.\ $p\leq q$ and $h\nleq q$, we get that $hg\leq pg\vee f\leq qg\vee f$, i.e.\ $s_f(q) = [q, f]=[q, g] = s_g(q)$. Since $h$ can be identified with an open set of $G_0$ and $p$ can be identified with the topological closure $\overline{p}$ of its corresponding point of $G_0$ (which we also denote by $p$), this is enough to show that $\{p\in G_0\mid s_f(p) = s_g(p)\}$ is the union of locally closed subsets of $G_0$. \\
SB5: Lemma \ref{lemma:I_[p, f]} readily implies that for every $(p, f)\in \ii$, $[p, f]\in \alpha(f)$.
\end{proof}
\subsection{The correspondence}
\begin{prop}
For every spatial SGF-quantale, $$\Q(\g(\Q), \alpha[\ii(\Q)]) \cong \Q.$$ For every SP-groupoid $\g$ and every selection base $\s$,
$$\g(\Q(\g, \s))\cong \g.$$
\end{prop}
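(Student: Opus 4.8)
The statement asserts two equivalences, and the plan is to prove them separately, in each case exhibiting an explicit isomorphism and checking it respects the relevant structure. Both directions rely heavily on the machinery already assembled: the canonical map $\alpha$ of Definition \ref{def:alpha} together with its properties in Theorem \ref{thm:alpha}, the identification $\ii(\Q)=\s$ from Proposition \ref{prop:S=I(Q(G, S))}, and the fact (Proposition \ref{prop:alpha[I(Q)] selection basis}) that $\alpha[\ii(\Q)]$ is a selection base of $\g(\Q)$.

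For the quantale isomorphism $\Q(\g(\Q),\alpha[\ii(\Q)])\cong\Q$, I would take $\alpha$ itself as the candidate map. By Theorem \ref{thm:alpha}.2, $\alpha$ is an embedding when $\Q$ is spatial, and by items 1, 3, 4, 5 it preserves $\bigvee$, $\cdot$, $\ast$, $e$ and $1$. It therefore suffices to identify the image of $\alpha$ with all of $\Q(\g(\Q),\alpha[\ii(\Q)])$. By Definition \ref{def:TGQ}, the target quantale is the sub-$\bigcup$-semilattice of $\p(\ii/\sim)$ generated by the selection base $\alpha[\ii(\Q)]$; since $\alpha$ preserves arbitrary joins and, by SGF1, every $a\in\Q$ is a join of elements of $\ii(\Q)$, we have $\alpha(a)=\bigcup_{f\leq a,\,f\in\ii(\Q)}\alpha(f)$, so $\alpha[\Q]$ is exactly the join-closure of $\alpha[\ii(\Q)]$. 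Hence $\alpha$ is a surjective embedding of unital involutive quantales onto $\Q(\g(\Q),\alpha[\ii(\Q)])$, and thus an isomorphism.

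For the groupoid isomorphism $\g(\Q(\g,\s))\cong\g$, write $\Q=\Q(\g,\s)$. On units, Proposition \ref{prop:S=I(Q(G, S))}.3 identifies $\p_e$ (the units of $\g(\Q)$) with $\{u[G_0\setminus\overline{p}]\mid p\in G_0\}$, and since $G_0$ is sober this correspondence $p\mapsto u[G_0\setminus\overline{p}]$ is a bijection onto $\p_e$; moreover Proposition \ref{prop:Q_e spatial} and Proposition \ref{prop:S=I(Q(G, S))}.2 show $\Q_e\cong\Omega(G_0)$ as frames, so this bijection is a homeomorphism of $G_0$ with the sober space of primes of $\Q_e$. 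On arrows, I would use the \emph{Remark} following Definition \ref{def:inters}, which spells out that $F\sim_{\overline{p}^c}G$ holds exactly when the associated local bisections $f,g\in\s$ are both defined at $p$ and agree there. This gives a well-defined map sending the arrow $x\in G_1$ to the class $[\,\overline{p}^c,\,F\,]$, where $p=d(x)$ and $F\in\s$ is any bisection image through $x$ (which exists by SB5); the incidence characterization guarantees independence of the choice of $F$, and injectivity follows because two arrows with the same domain lying in a common value-class of local bisections must coincide. Surjectivity is SB5 again. Finally one checks this bijection intertwines $d,r,u,m,i$ with the structure maps of Definition \ref{def:groupoid}, which is a direct translation using $r(f)=f[p]$, $pf=fq$, and the inverse-semigroup operations on $\s$.

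The main obstacle I anticipate is the arrow-level bijection in the second isomorphism, specifically the well-definedness and injectivity via the incidence relation. One must verify carefully that $x\mapsto[\,\overline{p}^c,\,F\,]$ does not depend on which $F\in\s$ passing through $x$ is chosen --- this is precisely where the pointwise reading of $\sim$ from the Remark, combined with soberness (so that $p$ is dense in $\overline{p}$ and membership $p\in H$ is detected by $H\cap\overline{p}\neq\varnothing$), does the work --- and conversely that distinct arrows give distinct classes, using that local bisections are determined by their images and that two bisection images incident at $p$ literally share the point $f(p)=g(p)$. The verification that all five structure maps commute with the bijection is routine but lengthy, reducing in each case to an already-established identity from Lemma \ref{lemma:p and f[p]} or the definition of the inverse-monoid operations on $\s$.
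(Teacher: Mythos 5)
Your first isomorphism is argued exactly as the paper argues it: $\alpha$ is a structure-preserving embedding by Theorem \ref{thm:alpha}, and SGF1 together with join-preservation identifies $\alpha[\Q]$ with the sub-$\bigcup$-semilattice of $\p(\ii/\sim)$ generated by $\alpha[\ii(\Q)]$, which is $\Q(\g(\Q),\alpha[\ii(\Q)])$ by definition; that half is correct. Your second half also follows the paper's route (the same $\varphi_0$ via $p\mapsto u[G_0\setminus\overline{p}]$, the same $\varphi_1\colon x\mapsto [P,G]$, surjectivity from SB5, structure-map checks deferred as routine), but your justification of the well-definedness of $\varphi_1$ has a genuine gap. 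You read the Remark after Definition \ref{def:inters} as saying that $F\sim_{\overline{p}^c}G$ holds exactly when $f$ and $g$ are both defined at $p$ and $f(p)=g(p)$; the Remark asserts this only when $G_0$ is $T_1$. In general it says $F\sim_{\overline{p}^c}G$ iff $f$ and $g$ coincide on $H\cap\overline{p}$ for some open $H\ni p$, and for a non-closed point $p$ this is strictly stronger than agreement at $p$ alone: $H\cap\overline{p}$ may contain points $q\neq p$ at which nothing forces $f(q)=g(q)$. Since the whole setting is aimed at sober non-$T_1$ spaces (both examples in Section 8 are not $T_1$), this is not a corner case, and soberness by itself does not close it.

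The missing ingredient is axiom SB4. If $x=g(p)=g'(p)$, the agreement set $A=\{q\in G_0\mid g(q)=g'(q)\}$ contains $p$ and, by SB4, is a union of locally closed sets; picking $H\cap C\subseteq A$ with $p\in H\cap C$, $H$ open and $C$ closed, one gets $\overline{p}\subseteq C$ and hence $H\cap\overline{p}\subseteq A$, which is precisely the condition $HG\subseteq PG\cup G'$ witnessing $G\sim_P G'$. This is the same SB4 mechanism the paper uses to establish SPQ1 in the proof of Proposition \ref{prop:Q(G, S) is spatial} (the published proof of the present proposition hides it behind the word ``clearly''). With that step inserted, your argument goes through; note that your injectivity argument is unaffected, since that direction only needs incidence to imply $f(p)=g(p)$, which does follow from the Remark because $p\in H\cap\overline{p}$.
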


\begin{proof}
Let $\Q$ be a spatial SGF-quantale. Then by Proposition \ref{prop:G(Q) is SP groupoid}, $\g(\Q)$ is an SP-groupoid, and by Proposition \ref{prop:alpha[I(Q)] selection basis}, $\alpha[\ii(\Q)]$ is a selection base of $\g(\Q)$. Then by definition $\Q(\g(\Q), \alpha[\ii(\Q)])$ is the sub $\bigcup$-semilattice of $\p(G_1) = \p(\ii/\sim)$ generated by $\alpha[\ii(\Q)]$. Theorem \ref{def:alpha} guarantees that  $\Q$ is isomorphic as a unital involutive quantale to its $\alpha$-image $\alpha[\Q]$ and hence that $\alpha[\Q]$ is $\bigcup$-generated by  $\alpha[\ii(\Q)]$. Hence by definition $\Q(\g(\Q), \alpha[\ii(\Q)]) = \alpha[\Q]$.\\
Let $\g$ be an SP-groupoid and $\s$ be a selection base for $\g$. Then by Propositions  \ref{prop:Q(G, S) is SGF} and \ref{prop:Q(G, S) is spatial} $\Q(\g, \s)$ is a spatial SGF-quantale. Moreover, by Proposition \ref{prop:S=I(Q(G, S))}, $\ii(\Q(\g, \s)) = \s$, $\Q(\g, \s))_e$ can be identified with the topology $\Omega(G_0)$, and since $G_0$ is sober, the prime elements of $\Q(\g, \s))_e$ bijectively correspond to the points of $G_0$ via the assignment $p\mapsto u[G_0\setminus \overline{p}]$. Since the prime elements of $\Q(\g, \s))_e$ form the space of units of $\g(\Q(\g, \s))$, then the assignment defines the map $\varphi_0$. Since $\s$ is a selection base, for every $x\in G_1$ $x = g(p)$ for some local bisection $g:U_g\to G_1$ s.t.\ its corresponding $G\in \s$ and some $p\in U_g$. Hence $(P, G)\in \ii$. Clearly, $[P, G] = [P', G']$ for any local bisection $g':U_g\to G_1$ s.t.\ $x = g'(p')$ for some $p'\in U_{g'}$, so the assignment $x\mapsto [P, G]$ defines a map $\varphi_1:G_1\to \ii/\sim$. The map $\varphi_1$ is bijective: indeed, if $(P, G)\in \ii$, then $p\in U_g$, so $[P, G] = \varphi_1(g(p))$; moreover, if $\varphi_1(x) = [P, F] = \varphi_1(y)$, then $x = f(p) = y$.
The fact that $(\varphi_0, \varphi_1)$ is indeed a morphism of groupoids is a standard if tedious verification.
\end{proof}

\section{Comparison with the \'etale localic setting}
The aim of this  section is showing informally that our bijective correspondence extends, in the spatial setting,  the   non functorial duality defined in  \cite{Re07} between localic \'etale groupoids and inverse quantal frames.
In \cite{Re07}  inverse quantal frames are defined as unital involutive quantales $\Q$ which are also frames for the lattice operations, are generated by $\ii(\Q)$ and have a {\em support}, i.e.\ a completely join-preserving map $\varsigma:\Q\to \Q_e$ s.t.\
$\varsigma (a)\leq aa^\ast$ and $a\leq \varsigma(a) a$ for every $a\in \Q$\footnote{It readily follows 
that  for every $f\in\ii(\Q)$, $\varsigma(f)=f^\ast f$, hence $f = ff^\ast f$.}. 
Any such quantale is shown to be isomorphic to one of the form $\mathcal{O}(\g)$, for some localic \'etale groupoid\footnote{A {\em localic groupoid} is a groupoid in the category of locales. Such a groupoid is \'etale if $d$ is a partial homeomorphism.} $\g=(G_1,G_0)$. In particular, for any such $\g$, its associated quantale is based on the frame  $\mathcal{O}(\g)$, on which  the noncommutative product is defined by using the product  of $\g$ in the natural way.  When $\g$ is spatial (i.e.\ isomorphic to a topological groupoid), the back-and-forth correspondence in \cite{Re07} can be equivalently described in the following way. Recall that a $G$-set of a topological groupoid  is a subset $S\subseteq G_1$ such that the maps $d:S\to G_0$ and $r:S\to G_0$ are both homeomorphisms onto open subsets of $G_0$. A $G$-set\footnote{For the sake of highlighting the difference between the bisection images as defined in  Definition \ref{def:localbisection} and those of topological groupoids, in this section we will refer to the latter ones as $G$-sets.}  $S$ is therefore the image of a {\em continuous} local bisection $s:U\to G_1$, for some  open set $U$ of $G_0$. 
 Then the inverse quantal frame associated with any \'etale topological groupoid $(G_1,G_0)$ can be equivalently described as the sub $\bigcup$-semilattice of $\p(G_1)$ generated by the $G$-sets of $G_1$. Conversely, if $\Q$ is an inverse  quantal frame corresponding to some spatial \'etale groupoid $(G_1,G_0)$, then $G_0$ can be equivalently recovered  as the topological space dual to the locale $\Q_e$, and  $G_1$  as the set of {\em germs} of elements of $\ii(\Q)$, i.e.\ as the set of  the equivalence classes of the relation $\sim$ on $\ii(\Q)$ defined as $f\sim g$ if and only if $hf=hg$ on some neighborhood $h$ of a point $p\in G_0$.

\vskip1mm
To show that the spatial version of the correspondence in  \cite{Re07} is a special case of our construction, we make the following remarks.
As we remarked early on, the notion of local bisection introduced in Definition \ref{def:localbisection} does not refer to any topology on $G_1$. However, if for some selection base $\mathcal{S}$  (Definition \ref{def:selfam}), the quantale $\Q(\g,\mathcal{S})$ as in Definition \ref{def:TGQ} happens to be an inverse quantal frame, then this quantale defines a topology on $G_1$. 
To continue the discussion, the following lemma will be useful:
 \begin{lemma}\label{lm:etale}
If $\Q$ is an inverse quantal frame, then for all $f, g\in \ii(\Q)$ and every $p\in \p_e$,  if $f\sim_p g$ then  there exists some $k\leq d(f)d(g)$ such that $k\not\leq p$ and $kf=kg$.
\end{lemma}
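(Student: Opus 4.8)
The statement asserts that when $\Q$ is an inverse quantal frame, the incidence relation $f\sim_p g$ can be witnessed by a single element $k\leq d(f)d(g)$ (i.e.\ $k \leq d(f)\wedge d(g)$, since $\Q_e$ is a frame) with $k\not\leq p$ and the \emph{equality} $kf = kg$. The definition of $f\sim_p g$ gives only an inequality $hf\leq pf\vee g$ for some $h\not\leq p$ with $h\leq d(f)\wedge d(g)$, so the content of the lemma is upgrading this inequality to an equality on a possibly smaller neighborhood. The plan is to use the extra structure of an inverse quantal frame, namely the frame distributivity on the whole of $\Q$ (not merely on $\Q_e$) together with the support $\varsigma$.

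\begin{proof}[Proof proposal]
My plan is to start from the defining witness of $f\sim_p g$: there is $h\leq d(f)\wedge d(g)$ with $h\not\leq p$ and $hf\leq pf\vee g$. The goal is to cut down $h$ to a $k\not\leq p$ on which the two functional elements literally agree. First I would replace the join with an honest frame-theoretic meet: since $\Q$ is a frame, the element $hf\wedge g$ is well behaved, and I would consider $k := \varsigma(hf\wedge g)$, the support of the overlap of $hf$ and $g$, which lies in $\Q_e$ and satisfies $k\leq d(hf\wedge g)\leq d(f)\wedge d(g)$. The key point to establish is that on $k$ the elements $f$ and $g$ coincide, i.e.\ $kf = k(hf\wedge g) = kg$; here I would use that $hf$ and $g$ are both functional, so $hf\wedge g$ is again functional and agrees with each of $hf, g$ after multiplying by its own support (using $a = \varsigma(a)a$ applied to $a = hf\wedge g$, together with $hf = d(h)\,f$ and functionality).

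The heart of the argument is showing $k\not\leq p$. From $hf\leq pf\vee g$ I would meet both sides with $hf$ using frame distributivity in $\Q$ to obtain $hf = hf\wedge(pf\vee g) = (hf\wedge pf)\vee(hf\wedge g)$. Taking supports and using that $\varsigma$ is join-preserving gives $d(hf) = \varsigma(hf\wedge pf)\vee \varsigma(hf\wedge g)\leq pd(f)\vee k$, where I have used $\varsigma(hf\wedge pf)\leq \varsigma(pf) = pd(f)\leq p$ and $k = \varsigma(hf\wedge g)$. Since $h\not\leq p$ and $d(f)\not\leq p$ (as $(p,f)\in\ii$) and $p$ is prime, $d(hf) = hd(f)\not\leq p$; combined with $d(hf)\leq p\vee k$ and primeness of $p$, this forces $k\not\leq p$, which is exactly what is needed.

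The main obstacle I anticipate is the manipulation of supports and meets in the step $hf = (hf\wedge pf)\vee(hf\wedge g)$ and the identification $kf = kg$: these rely essentially on frame distributivity holding on all of $\Q$, which is available precisely because $\Q$ is an inverse quantal frame and not merely an SGF-quantale. I would be careful that $\varsigma(hf\wedge pf)$ is genuinely bounded above by $p$ (using monotonicity of $\varsigma$ and $\varsigma(pf)\leq \varsigma(f)$ composed with the fact that the support of $pf$ is $pd(f)$, via Lemma \ref{lemma:action} and $\varsigma(f)=d(f)$ for functional $f$), and that the support calculus behaves as expected on meets of functional elements. Once these support identities are in place, the conclusion $k\leq d(f)\wedge d(g)$, $k\not\leq p$, and $kf=kg$ all follow, completing the proof.
\end{proof}
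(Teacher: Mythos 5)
Your proof is correct and takes essentially the same route as the paper's: both use frame distributivity to decompose $hf = (hf\wedge pf)\vee(hf\wedge g)$ and take $k$ to be the support of the overlap $hf\wedge g$ (equivalently, the element of $\Q_e$ with $kf = hf\wedge g$), concluding $k\not\leq p$ because otherwise $h$ itself would lie below $p$. Your only deviation --- applying the join-preserving map $\varsigma$ to the decomposition instead of cancelling $f$ to obtain $h = ph\vee k$ in $\Q_e$ --- is cosmetic, and you in fact spell out details the paper leaves implicit (the existence of $k$, the bound $k\leq d(f)d(g)$, and the verification $kf = kg$).
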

\begin{proof} By assumption, there exists some $h\in \Q_e$ s.t.\ $h\not\leq p$,  $h\leq d(f)d(g)$ and $hf\leq pf\vee g$.  Since $\Q$ is distributive, $hf=phf\vee (hf\wedge g)$. Let $kf=hf\wedge g$. Since $h=ph\vee k\not\leq p$, we get also $k\not\leq p$.  \end{proof}
\noindent Since  $\ii(\Q(\g,\mathcal{S})) = \s$ (cf.\ Proposition \ref{prop:S=I(Q(G, S))}), the Lemma above implies that $\s$ is a base for the topology $\Q(\g,\mathcal{S})$. 
Then, notice that the elements of $\mathcal{S}$  are  images of local bisections that are {\em continuous} with respect to the given topology of $G_0$ and the topology $\Q(\g,\mathcal{S})$ on $G_1$: indeed, let $F\in \s$ and let $f$ be its associated local bisection. To show that $f$ is continuous it is enough to check that for every basic open $G\in \s$, $f^{-1}[G]\in \Omega(G_0)$, i.e.\ that every $p\in f^{-1}[G]$ has an open neighborhood that is contained in $f^{-1}[G]$. But this is again guaranteed by the Lemma. 
Therefore, bisection images defined as in Definition \ref{def:localbisection} are $G$-sets according to the standard definition. 
Analogously, it can be shown that the structure maps of the groupoid $(G_0, G_1)$ are continuous, i.e.\  $(G_0, G_1)$ is a topological groupoid.
By well known results in groupoid theory (cf.\ \cite{Renault}, chapter I, Definition 2.6,  Lemma 2.7 and
Proposition 2.8) this topological groupoid is \'etale.

Conversely, if $\Q$ is an inverse quantal frame, then it not difficult to see that $\Q$ is also an SGF-quantale.
If $\Q$ is also a spatial quantale as in Definition \ref{def:SPQ}, then its associated groupoid quantale $\g(\Q)$ (cf.\ Definition \ref{def:groupoid}) is defined by taking $G_1$ as the set of equivalence classes $[p,f]$ with respect to the incidence relation as in Definition \ref{def:inters}. 

Lemma \ref{lm:etale} shows that the equivalence classes $[p,f]$ coincide with the {\em germs} of local bisections, as in Definition \ref{def:localbisection}, (at $p$),
Also in this case, $\Q$ can be identified with a topology on $G_1$, via the canonical embedding $\alpha$ (cf.\ Theorem \ref{thm:alpha}), and by Proposition \ref{prop:alpha[I(Q)] selection basis}, $\alpha[\ii(\Q)]$ is a selection base; then axiom SB3 readily implies that $\alpha[\ii(\Q)]$ is collection of all $G$-sets, according to the standard definition, i.e.\ every $S\in \alpha[\ii(\Q)]$ is associated with a {\em continuous} local bisection. Hence the  construction of $G_1$ from $\Q$ in \cite{Re07} and in this paper coincide.  As before, from the same results in \cite{Renault}, the groupoid $(G_1, G_0)$ is \'etale.

\section{Examples}
In this section, we present two examples of groupoids arising as the equivalence relations induced by  group actions on topological spaces, as described in Example \ref{ex: local bisections}.2.
\subsection{Finite, not $T_1$ and \'etale}

Consider the finite topological space $X = (G_0, \Omega(G_0))$, defined as follows:
\begin{center} $G_0 = \{p_0, p_1, p_2\}$, $\Omega(G_0) = \{P_0 = \varnothing, H = \{p_0\}, P_1 = \{p_0, p_2\}, P_2 = \{p_0, p_1\}, G_0\}$.
\end{center}
So the opens are the down-sets of the  partial order on the left, and the lattice of the topology is represented on the right:
\begin{center}
\begin{tikzpicture}
\draw[black, -]  (0, 0) -- (1, 1);%
\draw[black, -]  (0, 0) --(-1, 1);%
\filldraw[black] (0,0) circle (2pt);
\filldraw[black] (-1,1) circle (2pt);
\filldraw[black] (1,1) circle (2pt);
\draw (0, -0.3) node {\small{$p_0$}};
\draw (-1, 1.3) node {\small{$p_1$}};
\draw (1, 1.3) node {\small{$p_2$}};

\filldraw[black] (4,1.5) circle (2pt);
\filldraw[black] (4.5,1) circle (2pt);
\draw[black] (4.5,1) circle (3.5pt);
\filldraw[black] (3.5,1) circle (2pt);
\draw[black] (3.5,1) circle (3.5pt);
\filldraw[black] (4,0.5) circle (2pt);
\filldraw[black] (4,0) circle (2pt);
\draw[black] (4,0) circle (3.5pt);

\draw[black, -]  (4, 0) -- (4, 0.5)-- (3.5, 1) -- (4, 1.5) --(4.5, 1)-- (4, 0.5);
\draw (4, -0.3) node {\small{$P_0$}};
\draw (3.5, 0.7) node {\small{$P_1$}};
\draw (4.5, 0.7) node {\small{$P_2$}};
\draw (4.2, 0.4) node {\small{$H$}};
\draw (4, 1.8) node {\small{$G_0$}};
\end{tikzpicture}
\end{center}
\noindent $X$ is clearly not $T_1$. The prime elements of $\Omega(G_0)$ are exactly $P_0$, $P_1$ and $P_2$, hence $X$ is sober.
The group acting on $X$ is $G = \{\varphi, id_{X}\}$, where $(\varphi(p_0) = p_0, \varphi(p_1) = p_2, \varphi(p_2) = p_1)$.
 The equivalence relation induced by the action of $G$ is then $$R = \{(p_0, p_0), (p_1, p_1), (p_2, p_2), (p_1, p_2), (p_2, p_1)\}.$$ The collection of partial homeomorphisms $X\to X$ consists of the restrictions to the open sets in $\Omega(G_0)$ of the  maps $\varphi$ and $id_{X}$. For every $H'\in \Omega(G_0)$,  $H'\varphi$ will denote the graph of the restriction of $\varphi$ to $H'$. The collection of the graphs of partial homeomorphisms $X\to X$ is
\begin{center}
$\s = \{H' = id_{H'}\mid H'\in \Omega(G_0)\}\cup\{H\varphi, P_1\varphi, P_2\varphi, \varphi\}$.
\end{center}
$X$ can be represented as the groupoid $\g = (X, R)$; then $\s$ is the collection of the bisection images of $\g$ and $\g$ is SP. Then $\Q(\g, \s)$ is the sub $\bigcup$-semilattice of $\p(R)$ generated by $\s$. Notice that for any two partial homeomorphisms of $X$ the set over which they coincide is an open set of $G_0$; this implies that the intersection of the graphs of any two partial homeomorphisms is again a graph of a partial homeomorphism, hence $\s$ is the base of a topology on $G_1$. So (cf.\ \cite{Pat99}, \cite{Re07}, and the discussion in Section 7)   $\g$ is \'etale  and $\Q(\g, \s)$ is an inverse quantal frame.

\subsection{Finite, not $T_1$ and not \'etale}

Consider the finite topological space $X = (G_0, \Omega(G_0))$, defined as follows:
\begin{center} $G_0 = \{p_0, p_1, p_2\}$, $\Omega(G_0) = \{\varnothing, P_1 = \{p_2\}, P_2 = \{p_1\},  P_0 = \{p_1, p_2\}, G_0\}$,
\end{center}
So the opens are the down-sets of the  partial order on the left, and the lattice of the topology is represented on the right:

\begin{center}
\begin{tikzpicture}
\draw[black, -]  (-1, 0) -- (0, 1);%
\draw[black, -]  (1, 0) --(0, 1);%
\filldraw[black] (0,1) circle (2pt);
\filldraw[black] (-1,0) circle (2pt);
\filldraw[black] (1,0) circle (2pt);
\draw (0, 1.3) node {\small{$p_0$}};
\draw (-1, -0.3) node {\small{$p_1$}};
\draw (1, -0.3) node {\small{$p_2$}};

\filldraw[black] (4,1.5) circle (2pt);
\filldraw[black] (4.5,0.5) circle (2pt);
\draw[black] (4.5,0.5) circle (3.5pt);
\filldraw[black] (3.5,0.5) circle (2pt);
\draw[black] (3.5,0.5) circle (3.5pt);
\filldraw[black] (4,1) circle (2pt);
\filldraw[black] (4,0) circle (2pt);
\draw[black] (4,1) circle (3.5pt);

\draw[black, -]  (4, 1.5) -- (4, 1)-- (4.5, 0.5) -- (4, 0) --(3.5, 0.5)-- (4, 1);
\draw (4, -0.3) node {\small{$\varnothing$}};
\draw (3.5, 0.2) node {\small{$P_1$}};
\draw (4.5, 0.2) node {\small{$P_2$}};
\draw (4.2, 1.2) node {\small{$P_0$}};
\draw (4, 1.8) node {\small{$G_0$}};

\end{tikzpicture}
\end{center}
$X$ is clearly not $T_1$. The prime elements of $\Omega(G_0)$ are exactly $P_0$, $P_1$ and $P_2$, hence $X$ is sober.
The group acting on $X$ is $G = \{\varphi, id_{X}\}$, where $(\varphi(p_0) = p_0, \varphi(p_1) = p_2, \varphi(p_2) = p_1)$, and the equivalence relation induced by the action of $G$ is  $$R = \{(p_0, p_0), (p_1, p_1), (p_2, p_2), (p_1, p_2), (p_2, p_1)\}.$$
The collection of partial homeomorphisms $X\to X$ consists of the restrictions to the open sets in $\Omega(G_0)$ of the  maps $\varphi$ and $id_{X}$. For every $H\in \Omega(G_0)$,  $H\varphi$ will denote the graph of the restriction of $\varphi$ to $H$. The collection of the graphs of partial homeomorphisms $X\to X$ is
\begin{center}
$\s = \{H = id_{H}\mid H\in \Omega(G_0)\}\cup\{P_0\varphi, P_1\varphi, P_2\varphi, \varphi\}$.
\end{center}
$X$ can be represented as the groupoid $\g = (X, R)$; then $\s$ is the collection of the bisection images of $\g$ and $\g$ is SP. Notice that the set over which the graphs of $\varphi$ and $id_X$ coincide is $\{(p_0, p_0)\}$, which cannot be (nor contain) the graph of any (nonempty) partial homeomorphism since $\{p_0\}$  is closed but not open. Hence, $\s$ is not a topological base. 
Therefore,  (cf.\ \cite{Pat99}, \cite{Re07} and the discussion in Section 7)  $\g$ is not \'etale and  $\Q(\g, \s)$ is not a distributive lattice.

\noindent Alessandra Palmigiano \\
Institute for Logic, Language and Computation,\\
FNWI University of Amsterdam,\\
P.O. Box 94242\\
1090 GE Amsterdam\\
$<$a.palmigiano@uva.nl$>$

 \vspace{0.5cm}

 \noindent Riccardo Re \\
 Dipartimento di Matematica e Informatica,\\
 Universit\`a di Catania\\
 Viale Andrea Doria 6,\\
 95125 Catania, Italy.\\
 $<$riccardo@dmi.unict.it$>$

\end{document}